\newtheorem{definition}{Definition}[section]
\newtheorem{lemma}[definition]{Lemma}
\newtheorem{prop}[definition]{Proposition}
\newtheorem{cor}[definition]{Corollary}
\newtheorem{remark}[definition]{Remark}
\newtheorem{theorem}[definition]{Theorem}
\theoremstyle{definition}\newtheorem{exam}[definition]{Example}
\newtheorem{fact}[definition]{Fact}
\newcommand{\Disc}{\mathop{{\rm Disc}}\nolimits}
\newcommand{\pr}{\perp}
\newcommand{\Tych}{\mathop{{\rm Tych}}\nolimits}
\newcommand{\ov}{\overline}
\newcommand{\pty}{\emptyset}
\newcommand{\ots}{\otimes}
\newcommand{\sep}{\supseteq}
\newcommand{\seq}{\subseteq}
\newcommand{\sm}{\setminus}
\newcommand{\ts}{\times}
\newcommand{\wg}{\wedge}
\newcommand{\wh}{\widehat}
\newcommand{\lan}{\langle}
\newcommand{\ran}{\rangle}
\newcommand{\La}{\Leftarrow}
\newcommand{\Ra}{\Rightarrow}
\newcommand{\al}{\alpha}
\newcommand{\del}{\delta}
\newcommand{\Del}{\Delta}
\newcommand{\ga}{\gamma}
\newcommand{\om}{\omega}
\renewcommand{\phi}{\varphi}
\renewcommand{\th}{\theta}
\newcommand{\Th}{\Theta}
\newcommand{\bpm}{\begin{pmatrix}}
\newcommand{\epm}{\end{pmatrix}}
\newcommand{\bsm}{\begin{smallmatrix}}
\newcommand{\esm}{\end{smallmatrix}}
\newcommand{\cF}{\mathcal{F}}
\newcommand{\cL}{\mathcal{L}}
\newcommand{\cP}{\mathcal{P}}
\newcommand{\cT}{\mathcal{T}}
\newcommand{\cZ}{\mathcal{Z}}
\newcommand{\fF}{\mathfrak{F}}
\newcommand{\fG}{\mathfrak{G}}
\newcommand{\fH}{\mathfrak{H}}
\newcommand{\BG}{\mathbb{G}}
\newcommand{\N}{\mathbb{N}}
\newcommand{\Q}{\mathbb{Q}}
\newcommand{\R}{\mathbb{R}}
\newcommand{\Z}{\mathbb{Z}}
\newcommand{\lt}{\left}
\newcommand{\rt}{\right}
\DeclareMathOperator{\CA}{CoAcc}%-------------------------------
\DeclareMathOperator{\CD}{CoDcc}%-------------------------------
\DeclareMathOperator{\Cof}{Cof}%-------------------------------
\DeclareMathOperator{\FU}{Funct} %----------------
\DeclareMathOperator{\Map}{Map} %----------------
\DeclareMathOperator{\supp}{supp}%-------------------------------
\theoremstyle{definition}
\def\ga{\gamma}
\def\al{\alpha}
\def\ph{\varphi}
\def\ba{\mathbf a}
\def\bb{\mathbf b}
\def\bc{\mathbf c}
\newcommand{\set}[1]{\left\{#1\right\}}
\newcommand{\liml}{\lim\limits}
\newcommand{\suml}{\sum\limits}
\newcommand{\LR}{\Leftrightarrow}
\def\Ta{\mathcal{T}}
\def\Za{\mathcal{Z}}
\def\La{\mathcal{L}}
\def\fG{\mathfrak G}
\def\fF{\mathfrak F}
\def\fH{\mathfrak H}
\def\th{{\mathrm{Tych}}}
\def\Cof{\mathrm{Cof}\,}
\def\MOG{\mathcal{M}_0(\mathfrak{G})}
\def\MG{\mathcal{M}(\fG)}
\begin{document}

\footskip30pt

\date{}

\title{Topological linear spaces of formal linear sums and continuous linear operators}

\author{Nikolay I. Dubrovin}

\address[N.~Dubrovin]{Department of Algebra and geometry, Vladimir State University,
Gorky Street  87, Vladimir, 600000, Russia}
\email{ndubrovin81@gmail.com; phone +79157992425}
\thanks{The author is supported by the EPSRC grant no. EP/D077907/1.
He also thanks Manchester University for kind hospitality.}

\subjclass[2000]{06F15, 46K05, 54A20} 
\keywords{Filter, operator topology, involution, left ordered groups}

\begin{abstract}
The rings of linear continuous operators on the topological spaces
of $\fG$-zero maps were described, where $\fG$ is a filter on a set with an involution.
This applies to modules of formal series with well ordered support over left
ordered groups.
\end{abstract}

\maketitle

\pagestyle{plain}

\section{Introduction}\label{S-Int}

If $X$ is a topological space, then the ring $C(X)$ of continuous functions from
$X$ to $\R$ is a classical object in topology and analysis. For instance, one can be
interested in properties of $C(X)$ as a ring, and from  this point of view the situation
is well understood (see \cite{G-J}).

If $X$ has an additional structure, for instance, if $X$ is a linear topological space,
one can consider the properties of the ring of linear continuous functions from $X$
to $\R$ endowing it with different `natural' topologies (see \cite{Bou}). However
sometimes we have to deal with the case, when the target, $K$, of maps from $X$ is a
noncommutative skew field. For instance, this situation occurs trying to embed a group ring
$FG$ of a (torsion-free) group $G$ over a field $F$ into a skew field.

For example, let $\BG$ be the universal covering of the group $SL_2(\R)$ and
$U=\bigl\{\lt(\bsm a&b\\0&a^{-1}\esm\rt)\mid a, b\in \R,\, a>0 \bigl\}$ is a subgroup of
$SL_2(\R)$. Since $U$ is metabelian and torsion-free, the group ring $FU$ (over any field $F$)
is an Ore domain, therefore its classical quotient field $K$ is a (noncommutative) skew field.
Trying to extend this
embedding to an embedding of $K \BG$ into a skew field, the first author developed the
following approach (see \cite{Du94}). He considered the space $K\{\CD \BG\}$ of all formal
series on $\BG$ with well ordered support, and the group ring $K\BG$ acting on this space by left
multiplication. Then one can invert the elements of $K \BG$ as linear maps forming a
rational closure $D$ of $K\BG$ in $K\{\CD \BG\}$. The behavior of elements of $D$ is
quite complicated, and the aforementioned paper contains a series of algebraic conditions on
elements of $D$. It is quite difficult and tedious to verify that these conditions
respect basic operations. Later (see \cite{DuDu2}) Dubrovin noticed that an essential part of the
proof can be simplified by endowing  $K\{\CD \BG\}$ with a structure of a linear topological
space such that elements of $D$ become linear continuous maps. Thus the aforementioned
algebraic conditions can be understood as well known properties of continuous maps.

In this paper we develop a very general approach to tackling this situation. Namely,
with each filter $\fG$ on a set $G$ we connect a linear space $ K \{\fG\}$ of all maps from
$G$ to $K$ whose zero set belongs to $\fG$. We endow this space with a linear Hausdorff
topology making it into linear topological space.
The examples of such topologies
include Tychonoff topology on the product of spaces, but also the adic topology on the space
of Laurent power series. However, the example of our main interest will be the space
of formal series $K\{\CD G\}$ with well ordered support over a left ordered group $G$.
One of the main result of the paper describes linear continuous maps between topological
spaces $K \{\fG\}$ and $K\{H\}$, where $\fG$ and $\fH$ are filters on sets $G$ and $H$
with involution (see Theorem~\ref{thm:endo}). In particular, we completely characterize
such maps in terms of zero sets of their (infinite) matrices. Namely, these zero sets must
belong to a special filter on the direct product of $H$ and $G$, which were
introduced and investigated in \cite{DuDu1}.

As a corollary we give a matrix description of the ring of continuous operators of the
space $K \{\fG\}$ (see Theorem~\ref{thm:Ring}).

There is a different approach how to embed a group ring of a countable torsion-free group
into a skew field, based on the theory of $C^*$-algebras and operators on Hilbert spaces
(see \cite{Luck}). From this point of view this paper is a first step in developing a similar
machinery in a more general and abstract situation. For instance, in Section~\ref{S-Pairing}
we introduce the operation of pairing on formal sums which resembles scalar product in
Hilbert spaces.

\section{Filters}\label{S-Fil}

In this section we recall some basic facts and definitions, and also
some results from \cite{DuDu1}.

Let $G$ be a set. A nonempty collection  $\fG$ of subsets of $G$ is
said to be a \emph{filter}, if it is closed with respect to finite
intersections and supersets. For instance, we allow the set of all
subsets of $G$, $\cP(G)$, to be a filter. Clearly, if $\fG$ is a
filter, then $\fG=\cP(G)$ iff $\pty\in \fG$. If $\fG\neq \cP(G)$,
then $\fG$ is said to be a \emph{proper filter} on $G$.

Let $\cL$ be a collection of subsets of $G$ with the following
property: for all $A, B\in \cL$ there is $C\in \cL$ such that $C\seq
A\cap B$. Then $\fG=\{B\mid A\seq B$ for some $A\in \cL\}$ is a
\emph{filter generated by $\cL$}, and $\cL$ is a \emph{filter base
for $\fG$}.

%The following examples of filters will play a prominent role in this paper. 
We say that a subset $A$ of $G$ is \emph{cofinite}, if its
complement $\ov A$ is a finite set. The \emph{Frechet filter} on
$G$, $\Cof(G)$, consists of all cofinite subsets of $G$. Clearly
$\Cof(G)$ is a proper filter iff $G$ is an infinite set.

Let $G$ be a linearly ordered set. A subset $\Delta\subseteq G$ is
said to be \emph{well ordered}, if every nonempty subset of $\Delta$
has a minimal element. This is the same as $\Delta$ has a
\emph{descending chain condition} (d.c.c.): every descending chain
of elements $a_1\geq a_2\geq \dots$ of $\Delta$ stabilizes. Clearly
$\Delta$ has a d.c.c. iff it contains no (strictly) descending chain
$a_1> a_2> a_3> \dots$.

Similarly, $\Delta\subseteq G$ has an \emph{ascending chain
condition} (a.c.c.), if every ascending chain $a_1\leq a_2\leq\dots$
of its elements stabilizes. Thus $\Delta$ has an a.c.c. iff it
contains no (strictly) ascending chain $a_1 < a_2 < \dots$ iff $G$
is well ordered in the dual ordering.

Suppose that $(G, \leq)$ is a linearly ordered set. Let $\CD(G)$
denote the collection of all subsets of $G$ whose complement has a
d.c.c. Since the union of two well ordered subsets of $G$ is well
ordered, $\CD(G)$ is a filter on $G$, and it is a proper filter iff
$G$ is not well ordered.

Similarly let $\CA(G)$ be a collection of all subsets of $G$ whose
complement has an a.c.c. Then $\CA(G)$ is a filter on $G$ and this
filter is proper iff $G$ contains a strictly ascending chain.

We can order the filters on $G$ by inclusion: $\fG_1\leq \fG_2$ if
$\fG_1\seq \fG_2$. It is easily checked that with respect to this
ordering the set of all filters on $G$ forms a lattice, that is, for
any filters $\fG_1$ and $\fG_2$ there is a least filter $\fG_1 \vee
\fG_2$ containing  $\fG_1$ and $\fG_2$, and there is a largest
filter $\fG_1 \wg \fG_2$ which is contained in both
 $\fG_1$ and $\fG_2$.

The following remark describes the operations in this lattice.

\begin{remark}\label{oper-filt}
Let $\fG_1$, $\fG_2$ be filters on a set $G$. Then $\fG_1\wg \fG_2$
is given by the intersection of filters: $\fG_1\cap \fG_2=\{A\seq
G\mid A\in \fG_1$ and $A\in \fG_2\}$. Furthermore, $\fG_1\vee \fG_2$
is the filter generated by all intersections $A\cap B$, $A\in
\fG_1$, $B\in \fG_2$.
\end{remark}

We define a new operation on filters. In ring theory this operation
corresponds to the quotient of ideals. Suppose that $\fG_1$ and
$\fG_2$ are filters on $G$. Define $\fG_1: \fG_2=\{A\seq G \mid
A\cup A'\in \fG_1$ for every $A'\in \fG_2\}$.

\begin{fact}[\cite{DuDu1}, property 6]\label{quot-fil}
$\fG_1: \fG_2$ is a filter on $G$. Furthermore, $\fG_1: \fG_2$ is
the largest filter $\cF$ on $G$ with the property $\cF\cap \fG_2\seq
\fG_1$.
\end{fact}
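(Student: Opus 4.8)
The plan is to verify directly that $\fG_1:\fG_2$ satisfies the three defining properties of a filter, and then to establish the maximality statement in two halves: that $\fG_1:\fG_2$ itself enjoys the property $\cF\cap\fG_2\seq\fG_1$, and that any filter $\cF$ with this property is contained in it.

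For the filter axioms, nonemptiness is immediate. Since $\fG_1$ is a filter it is a nonempty family closed under supersets, so it contains $G$ (a superset of any of its members); then $G\cup A'=G\in\fG_1$ for every $A'\in\fG_2$, giving $G\in\fG_1:\fG_2$. Closure under supersets is equally direct: if $A\in\fG_1:\fG_2$ and $A\seq B$, then for each $A'\in\fG_2$ we have $A\cup A'\seq B\cup A'$, and since $A\cup A'\in\fG_1$ with $\fG_1$ upward closed, $B\cup A'\in\fG_1$. The only step requiring a computation is closure under finite intersections, and here the key is the distributive identity $(A\cap B)\cup A'=(A\cup A')\cap(B\cup A')$: if $A,B\in\fG_1:\fG_2$ then both factors on the right lie in $\fG_1$ for every $A'\in\fG_2$, and since $\fG_1$ is closed under intersection so is their intersection, whence $A\cap B\in\fG_1:\fG_2$.

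For the maximality, I would first invoke Remark~\ref{oper-filt} to read $\cF\cap\fG_2$ as the meet of filters, i.e. the family of subsets lying in both. To see that $\fG_1:\fG_2$ itself satisfies $(\fG_1:\fG_2)\cap\fG_2\seq\fG_1$, take $A$ belonging to both $\fG_1:\fG_2$ and $\fG_2$; applying the defining condition of $\fG_1:\fG_2$ with the choice $A'=A\in\fG_2$ yields $A\cup A=A\in\fG_1$. Conversely, given any filter $\cF$ with $\cF\cap\fG_2\seq\fG_1$, I would show $\cF\seq\fG_1:\fG_2$: for $A\in\cF$ and an arbitrary $A'\in\fG_2$, the set $A\cup A'$ contains both $A$ and $A'$, so by upward closure it lies in $\cF$ and in $\fG_2$ simultaneously, hence in $\cF\cap\fG_2\seq\fG_1$; thus $A\cup A'\in\fG_1$ for all $A'\in\fG_2$, which is precisely $A\in\fG_1:\fG_2$.

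There is no serious obstacle here; the argument is a short sequence of set-theoretic manipulations. The two small insights that drive it are the distributive law used for closure under intersection, and the device of substituting $A'=A$ (respectively forming $A\cup A'$) to pass between the quotient filter and the meet $\cF\cap\fG_2$. The only point demanding care is keeping clear that $\cF\cap\fG_2$ is read as the meet in the filter lattice rather than as a naive intersection of families — but these coincide by Remark~\ref{oper-filt}, so the ambiguity is harmless.
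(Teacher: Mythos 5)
Your proof is correct. The paper offers no argument of its own for this statement --- it is imported as Fact~\ref{quot-fil} directly from \cite{DuDu1} --- and your verification is the natural one: the distributive identity $(A\cap B)\cup A'=(A\cup A')\cap(B\cup A')$ handles closure under finite intersections, the substitution $A'=A$ gives $(\fG_1:\fG_2)\cap\fG_2\seq\fG_1$, and the observation that $A\cup A'$ lies in both $\cF$ and $\fG_2$ by upward closure yields maximality, with your reading of $\cF\cap\fG_2$ as the set-theoretic intersection agreeing with the lattice meet by Remark~\ref{oper-filt}.
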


The following remark is straightforward.

\begin{remark}\label{quot}
1) If $\fF_1\seq \fF_2$ then $\fF_1:\fG\seq \fF_2:\fG$.

2) If $\fG_1\seq \fG_2$ then $\fF:\fG_1\sep \fF: \fG_2$.

3) $\fF: \fG=(\fF\cap \fG): \fG$.

4) $\fF \seq \fF: \fG$.
\end{remark}

If $\fG$ is a filter, then define \[\fG^{\perp}=\{A\seq G \mid A\cup
A' \text{ is cofinite for every } A'\in \fG\}.\] For instance,
$\Cof(G)^{\perp}=\cP(G)$ and $\cP(G)^{\perp}=\Cof(G)$. It follows
from the definition that $\fG^{\perp}=\Cof(G): \fG$, hence
$\fG^{\perp}$ is a filter by Fact~\ref{quot-fil}. Furthermore,
Remark~\ref{quot}, 4) implies that $\Cof(G)\seq \fG^{\perp}$.

\begin{lemma}\label{two-perp}
$\fG\seq \fG^{\perp\perp}$ and $\fG^{\perp}=\fG^{\perp\perp\perp}$
for any filter $\fG$.
\end{lemma}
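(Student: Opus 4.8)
The plan is to recognize $\perp$ as an \emph{antitone polarity} and to derive both assertions as its standard formal consequences, so that essentially all the work reduces to one direct containment together with the order-reversing behaviour of $\perp$.

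First I would record that $\perp$ is order-reversing: if $\fG_1\seq\fG_2$, then $\fG_2^{\perp}\seq\fG_1^{\perp}$. This is immediate from the identity $\fG^{\perp}=\Cof(G):\fG$ noted before the lemma together with Remark~\ref{quot}, part~2) (which says $:$ is order-reversing in its second argument). Alternatively one can read it straight off the definition: if $A\cup A'$ is cofinite for every $A'\in\fG_2$, then a fortiori for every $A'\in\fG_1\seq\fG_2$, so $A\in\fG_2^{\perp}$ forces $A\in\fG_1^{\perp}$.

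Next I would prove $\fG\seq\fG^{\perp\perp}$ directly from the definition. Let $A\in\fG$; to see $A\in\fG^{\perp\perp}$ I must check that $A\cup B$ is cofinite for every $B\in\fG^{\perp}$. But $B\in\fG^{\perp}$ means exactly that $B\cup A'$ is cofinite for every $A'\in\fG$, and applying this to the particular element $A'=A\in\fG$ yields that $A\cup B$ is cofinite. Hence $A\in\fG^{\perp\perp}$, establishing the first claim.

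Finally, the identity $\fG^{\perp}=\fG^{\perp\perp\perp}$ follows by a short bootstrap, with no further computation. Applying the containment $\fG\seq\fG^{\perp\perp}$ to the filter $\fG^{\perp}$ in place of $\fG$ gives $\fG^{\perp}\seq(\fG^{\perp})^{\perp\perp}=\fG^{\perp\perp\perp}$. Conversely, applying the order-reversing map $\perp$ to the containment $\fG\seq\fG^{\perp\perp}$ gives $\fG^{\perp\perp\perp}=(\fG^{\perp\perp})^{\perp}\seq\fG^{\perp}$. The two inclusions together yield equality. I do not anticipate a genuine obstacle here; the only point requiring care is the (routine) verification of antitonicity, after which both statements are purely formal facts about any Galois-type polarity.
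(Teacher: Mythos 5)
Your proposal is correct and follows essentially the same route as the paper: the direct verification of $\fG\seq\fG^{\perp\perp}$ is identical, and your bootstrap for $\fG^{\perp}=\fG^{\perp\perp\perp}$ (antitonicity of $\perp$ via Remark~\ref{quot}~2) in one direction, the first containment applied to $\fG^{\perp}$ in the other) is exactly the paper's argument, merely spelled out in the language of Galois polarities.
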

\begin{proof}
We prove that $\fG\seq \fG^{\perp\perp}$. Fix $A\in \fG$ and choose
any $A'\in \fG^{\perp}$. Then (by the definition of $\fG^{\perp}$)
$A\cup A'$ is cofinite, hence $A\in \fG^{\perp\perp}$.

Remark~\ref{quot} 2) applied to $\fG\seq \fG^{\perp\perp}$ yields
$\fG^{\perp}\sep \fG^{\perp\perp\perp}$, and the reverse inclusion
follows from what we have just proved.
\end{proof}

A filter $\fG$ is said to be \emph{balanced}, if
$\fG^{\perp\perp}=\fG$. For instance, $\Cof(G)$ and $\cP(G)$ are
balanced filters. Furthermore, every balanced filter on $G$ contains
$\Cof(G)$.

\begin{fact}[\cite{DuDu1}, Thm.~14]\label{acc-bal}
Let $G$ be a linearly ordered set. Then $\CD(G)^{\perp}=\CA(G)$ and
$\CA(G)^{\perp}=\CD(G)$, therefore $\CD(G)$ and $\CA(G)$ are
balanced filters.
\end{fact}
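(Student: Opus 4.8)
The plan is to reduce both perp‑identities to a single finiteness principle about chain conditions and then read off the ``balanced'' conclusion formally. First I would rewrite the perp condition in terms of complements. Unwinding the definition, $A\in\CD(G)^{\perp}$ means that $A\cup A'$ is cofinite for every $A'\in\CD(G)$, i.e. $\ov A\cap\ov{A'}$ is finite for every such $A'$. As $A'$ runs over $\CD(G)$ its complement $\ov{A'}$ runs over exactly the subsets of $G$ with d.c.c. (the correspondence $A'\mapsto\ov{A'}$ is a bijection onto these). Writing $C=\ov A$, membership $A\in\CD(G)^{\perp}$ thus says precisely that $C\cap B$ is finite for every well ordered (d.c.c.) subset $B\seq G$. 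The same bookkeeping turns $A\in\CA(G)$ into the statement that $C=\ov A$ has a.c.c., so the equality $\CD(G)^{\perp}=\CA(G)$ becomes the purely combinatorial claim that, for a subset $C$ of a linear order, $C$ has a.c.c. if and only if $C\cap B$ is finite for every d.c.c. subset $B$.

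The engine of the argument is the elementary lemma I would establish first: a subset of a linearly ordered set satisfying both a.c.c. and d.c.c. is finite. Indeed, an infinite set with d.c.c. is well ordered, so repeatedly extracting the minimum of each successive remainder produces an infinite strictly ascending chain, contradicting a.c.c.; hence such a set cannot be infinite.

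With the lemma in hand the first equality splits into two inclusions. For $\CA(G)\seq\CD(G)^{\perp}$: if $C=\ov A$ has a.c.c. and $B$ is any d.c.c. subset, then $C\cap B$ has a.c.c. (being a subset of $C$) and d.c.c. (being a subset of $B$), hence is finite by the lemma. For the reverse inclusion I argue contrapositively: if $C$ fails a.c.c. it contains a strictly ascending sequence $c_1<c_2<\cdots$; the set $B=\{c_1,c_2,\dots\}$ is well ordered, so $A'=\ov B\in\CD(G)$, yet $\ov A\cap\ov{A'}=C\cap B=B$ is infinite, whence $A\cup A'$ is not cofinite and $A\notin\CD(G)^{\perp}$. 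The equality $\CA(G)^{\perp}=\CD(G)$ is proved by the symmetric argument, now using a strictly descending sequence (order type $\omega^{*}$, which has a.c.c.) as the test set $B$.

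Finally, the ``balanced'' assertion is a formal consequence of the two identities: $\CD(G)^{\perp\perp}=(\CD(G)^{\perp})^{\perp}=\CA(G)^{\perp}=\CD(G)$, and likewise $\CA(G)^{\perp\perp}=\CD(G)^{\perp}=\CA(G)$. The only genuine content is the finiteness lemma together with the observation that a single infinite monotone sequence inside $C$ already serves as a test set $B$ defeating the cofiniteness requirement. I expect the modest care to lie exactly in this ``defeating sequence'' step: one must check that the chosen $B$ truly belongs to the correct filter (that is, carries the right chain condition via its complement) rather than merely being infinite.
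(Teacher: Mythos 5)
Your proof is correct; note that the paper itself contains no argument for this statement --- it is imported verbatim as a Fact from \cite{DuDu1}, Thm.~14 --- so your write-up supplies a proof the present source omits. Your skeleton is the natural one: unwinding $\perp$ via $A\cup A'$ cofinite $\Leftrightarrow$ $\ov A\cap\ov{A'}$ finite turns $\CD(G)^{\perp}=\CA(G)$ into the combinatorial equivalence ``$C$ has a.c.c.\ iff $C\cap B$ is finite for every d.c.c.\ set $B$,'' and everything then rests on your finiteness lemma (a subset of a linear order with both a.c.c.\ and d.c.c.\ is finite) together with the contrapositive ``defeating sequence'' step. Both halves check out: in the easy direction $C\cap B$ inherits a.c.c.\ from $C$ and d.c.c.\ from $B$; in the hard direction your test set $B=\{c_1<c_2<\cdots\}$ is well ordered (order type $\omega$), so $\ov B\in\CD(G)$ as required, and symmetrically a strictly descending sequence has a.c.c., so its complement lies in $\CA(G)$ --- this is precisely the verification you flagged as the delicate point, and you carried it out. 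The deduction of balancedness, $\CD(G)^{\perp\perp}=\CA(G)^{\perp}=\CD(G)$ and dually, is indeed purely formal, matching how the paper uses the two identities via Lemma~\ref{two-perp}. One cosmetic slip: you write ``an infinite set with d.c.c.\ is well ordered,'' but \emph{every} subset with d.c.c.\ is well ordered, infinite or not; infiniteness is only needed for the extraction of the ascending $\omega$-chain, so the lemma's proof is unaffected.
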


\section{Space of $\fG$-zero functions}\label{S-Space}

Most results of this paper can be proven for normed skew fields $K$.
But to avoid technicalities, in what follows $K$ will always denote
a skew field with a discrete topology.

A left (right) $K$-linear space $L$ with a topology $\mathcal{T}$ is
said to be a \emph{linear topological space}, if the addition of
elements of $L$ defines a continuous function $L\times L\to L$,
where $L\times L$ is taken with product topology; and the same is
true for any function $k\times L\to L$ given by multiplication by $k\in K$.
Since $K$ is discrete the last condition can be
replaced by the following: for each open set $U\seq L$ and every
$0\neq k\in K$ the product $kU$ is open.

Suppose that $U_i$, $i\in I$ is a collection of subspaces of $L$
such that for all $i, j\in I$ there exists $k\in I$ with $U_k\seq
U_i\cap U_j$. A subset $V$ of $L$ is defined to be open, if for
every $a\in V$ there exists $i\in I$ such that $a+U_i\seq V$. This
defines a \emph{linear topology} $\cT$ on $L$, therefore $L$ is a
topological space with a linear topology. Note that $\cT$ is
Hausdorff iff $\cap_{i\in I} U_i=\{0\}$. In this paper we will
consider only Hausdorff linear topologies.

For instance, let $G$ be a set and let $L=\Map(G,K)$ be a left
(right) vector space of all maps from $G$ to $K$. Let $I$ be the
collection of all finite subsets of $G$, and we consider $I$ as a
set of indices. For each $i\in I$ define a subspace $U_i$ of $L$
consisting of all maps $f: G\to K$ such that $f(g)=0$ for every
$g\in i$. Then the family $U_i$, $i\in I$ defines a linear topology
on $L$ called the \emph{Tychonoff topology}. For instance, if $K$ is
a finite field (with discrete topology), then $L$ is a compact
space (Tychonoff theorem).

Suppose that $f: G\to K$ is a map. Then the \emph{support of $f$},
$\supp(f)$, is the following subset of $G$: $\supp(f) =\{g\in G\mid
f(g)\neq 0\}$. Similarly the \emph{zero-set of $f$}, $\cZ(f)$, is
defined as $\cZ(f)=\{g\in G\mid f(g)=0\}$. Clearly $G=\supp(f) \cup
\cZ(f)$ is a partition of $G$. Furthermore, if $f, h: G\to K$ and
$0\neq k\in K$ then $\cZ(f+h)\sep \cZ(f)\cap \cZ(h)$, $\cZ(k\cdot
f)=\cZ(f)$ and $\cZ(0)=G$, where $0$ stands for the zero function.

If $\fG$ is a filter on $G$, then $\FU(\fG)=\{f\in \Map(G,K)\mid
\cZ(f)\in \fG\}$ will denote the \emph{space of $\fG$-zero
functions}. Clearly $\FU(\fG)$ is a left and right subspace of the
(linear) topological space $\Map(G,K)$. For instance,
$\FU(\cP(G))=\Map(G,K)$.

The following remark shows that operations on linear spaces
$\FU(\fG)$ correspond to operations on the lattice of filters (see
Remark~\ref{oper-filt}).

\begin{lemma}\label{fil-oper}
Let $\fG_1$ and $\fG_2$ be filters on $G$. Then $\FU(\fG_1)\cap
\FU(\fG_2)=\FU(\fG_1 \cap \fG_2)$ and $\FU(\fG_1) +
\FU(\fG_2)=\FU(\fG_1 \vee \fG_2)$. Furthermore, if $\fG_1\seq \fG_2$
then $\FU(\fG_1)$ is a subspace of $\FU(\fG_2)$.
\end{lemma}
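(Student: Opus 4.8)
The plan is to reduce all three assertions to the elementary facts about zero-sets recorded just before the lemma, namely $\cZ(f+h)\sep\cZ(f)\cap\cZ(h)$ and $\cZ(kf)=\cZ(f)$ for $0\neq k$. Two of the three claims turn out to be immediate, and the genuine content lies in a single inclusion of the middle identity.

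For the intersection formula I would argue by a direct chain of equivalences: since $\fG_1\cap\fG_2$ is the set-theoretic intersection of the two filters (Remark~\ref{oper-filt}), a map $f$ lies in $\FU(\fG_1)\cap\FU(\fG_2)$ exactly when $\cZ(f)\in\fG_1$ and $\cZ(f)\in\fG_2$, which is the same as $\cZ(f)\in\fG_1\cap\fG_2$, i.e. $f\in\FU(\fG_1\cap\fG_2)$. The final assertion then follows at once: if $\fG_1\seq\fG_2$ then $\fG_1\cap\fG_2=\fG_1$, so the intersection formula gives $\FU(\fG_1)\cap\FU(\fG_2)=\FU(\fG_1)$, whence $\FU(\fG_1)\seq\FU(\fG_2)$; as $\FU(\fG_1)$ is already a subspace of $\Map(G,K)$, it is a subspace of $\FU(\fG_2)$.

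For the sum formula one inclusion is easy. Given $f_1\in\FU(\fG_1)$ and $f_2\in\FU(\fG_2)$, the set $\cZ(f_1)\cap\cZ(f_2)$ is the intersection of a member of $\fG_1$ with a member of $\fG_2$, hence lies in $\fG_1\vee\fG_2$ by the description of the join in Remark~\ref{oper-filt}; since $\cZ(f_1+f_2)\sep\cZ(f_1)\cap\cZ(f_2)$ and filters are upward closed, $f_1+f_2\in\FU(\fG_1\vee\fG_2)$. The reverse inclusion is the heart of the matter. Starting from $f\in\FU(\fG_1\vee\fG_2)$, the description of the join furnishes $A\in\fG_1$ and $B\in\fG_2$ with $A\cap B\seq\cZ(f)$. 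I would then split $f$ using the partition of $G$ into $A$ and its complement $\ov A$: define $f_1$ to agree with $f$ on $\ov A$ and to vanish on $A$, and set $f_2=f-f_1$, so that $f_2$ agrees with $f$ on $A$ and vanishes on $\ov A$. By construction $A\seq\cZ(f_1)$, so $\cZ(f_1)\in\fG_1$. For $f_2$ one checks $B\seq\cZ(f_2)$: if $g\in B$ and $g\in A$, then $g\in A\cap B\seq\cZ(f)$ forces $f(g)=0=f_2(g)$, while if $g\in B$ and $g\in\ov A$, then $f_2(g)=0$ by definition; hence $\cZ(f_2)\in\fG_2$. Thus $f=f_1+f_2\in\FU(\fG_1)+\FU(\fG_2)$.

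The only delicate point is this reverse inclusion: it is not enough to know abstractly that $\cZ(f)$ lies in the join, one must produce the concrete witnesses $A$ and $B$ and then verify that the naive ``cut along $A$'' decomposition really places the two pieces in the respective subspaces. The verification $B\seq\cZ(f_2)$ is precisely where the hypothesis $A\cap B\seq\cZ(f)$ is consumed, so this is the step I would write out most carefully; everything else is routine bookkeeping with the defining properties of $\cZ$.
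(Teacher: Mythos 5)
Your proposal is correct and follows essentially the same route as the paper: the same chain of equivalences via $\cZ(f)$ for the intersection formula, and for the join the same ``cut along $A$'' decomposition (your $f_1,f_2$ coincide with the paper's $h_1,h_2$, since $f$ vanishes on $A\cap B$). The only cosmetic difference is that you derive the final subspace assertion from the intersection formula, which the paper leaves implicit.
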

\begin{proof}
Clearly $f\in \FU(\fG_1)\cap \FU(\fG_2)$ iff $\cZ(f)\in \fG_1\cap
\fG_2$ iff $f\in \FU(\fG_1\cap \fG_2)$, which proves that
$\FU(\fG_1)\cap \FU(\fG_2)=\FU(\fG_1 \cap \fG_2)$.

To prove the inclusion $\FU(\fG_1) + \FU(\fG_2)\seq \FU(\fG_1 \vee
\fG_2)$ let $f\in \FU(\fG_1) + \FU(\fG_2)$. Then $f=h_1+h_2$, where
$\cZ(h_i)\in \fG_i$. It follows that $\cZ(f)\sep \cZ(h_1)\cap
\cZ(h_2)\in \fG_1 \vee \fG_2$, hence $f\in \FU(\fG_1 \vee \fG_2)$.

For the reverse inclusion suppose that $f\in \FU(\fG_1 \vee \fG_2)$,
that is, $\cZ(f)\sep A_1\cap A_2$ for some $A_i\in \fG_i$, $i=1, 2$. Define
$h\in \Map(G,K)$ as follows:
$$
h_1(g)=\begin{cases}
f(g),& \text{if $g\in \ov A_1$}\\
\phantom{f(g}0,& \text{if $g\in A_1$,}
\end{cases}
\hspace{15mm} h_2(g)=\begin{cases}
f(g),& \text{if $g\in  A_1\sm A_2$}\\
\phantom{fg)}0,& \text{if $g\notin A_1\sm A_2$.}
\end{cases}
$$

Then $f=h_1+h_2$ and $\cZ(h_1)\sep A_1$, $\cZ(h_2)\sep A_2$,
therefore $h_i\in \FU(\fG_i)$.
\end{proof}

Given $A\seq G$, we set $U(A,\fG)=\{f\in \FU(\fG)\mid \cZ(f)\sep
\ov A\}$. Clearly this is the same as $\supp(f)\seq A$.

The proof of the following lemma is straightforward.

\begin{lemma}\label{ua-left}
1) $U(A,\fG)$ is a left (right) subspace of $\FU(\fG)$.

2) If $A_1, A_2\seq G$ then $U(A_1\cap A_2, \fG)=U(A_1, \fG)\cap
U(A_2, \fG)$.
\end{lemma}

Now we are in a position to construct a linear topology on
$\FU(\fG)$.

\begin{theorem}[\cite{DuDu2}, Thm.~1]\label{top-UA}
1) The family of subspaces $\{U(A,\fG)\mid A\in \fG^{\pr}\}$ form a
base of zero neighborhoods of a linear topology $T(\fG)$ on the
spaces $\Map(G,K)$ and $\FU(\fG)$, and this topology is Hausdorff.

2) If $\fG$ is a  balanced filter, then $\FU(\fG)$ is complete in
this topology.

3) If $\Cof(G)\seq \fG$ then the set of all maps from $\Map(G,K)$
with finite support is dense in $\FU(\fG)$.
\end{theorem}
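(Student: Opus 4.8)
I would treat the three parts separately, using throughout the construction of a linear topology from a downward-directed family of subspaces recalled before the theorem, together with the inclusion $\Cof(G)\seq\fG^{\pr}$ (Remark~\ref{quot}(4)). For part~1, I first check that $\{U(A,\fG)\mid A\in\fG^{\pr}\}$ is downward directed: as $\fG^{\pr}$ is a filter it is closed under finite intersections, so $A_1\cap A_2\in\fG^{\pr}$ for $A_1,A_2\in\fG^{\pr}$, and Lemma~\ref{ua-left}(2) gives $U(A_1\cap A_2,\fG)=U(A_1,\fG)\cap U(A_2,\fG)$. Thus the family is a base of zero neighborhoods of a linear topology, and the analogous subspaces $\{f\in\Map(G,K)\mid\supp(f)\seq A\}$ do the same job on $\Map(G,K)$, cutting down to the $U(A,\fG)$ on $\FU(\fG)$. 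By the Hausdorff criterion recalled above it then suffices to see that the intersection of all these subspaces is $\{0\}$: if $\supp(f)\seq A$ for every $A\in\fG^{\pr}$, then in particular $\supp(f)\seq A$ for every cofinite $A$, whence $\supp(f)\seq\bigcap\{A\mid\ov A\text{ finite}\}=\pty$ and $f=0$.

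The substantial part is part~2. Given a Cauchy net $(f_\lambda)$ in $\FU(\fG)$, I would construct the candidate limit pointwise: for each $g\in G$ the cofinite set $G\sm\{g\}$ lies in $\fG^{\pr}$, so the Cauchy condition forces $f_\lambda(g)$ to be eventually constant, and I set $f(g)$ equal to that value. Convergence $f_\lambda\to f$ is then formal, since for fixed $A\in\fG^{\pr}$ the Cauchy condition makes all $f_\lambda$ with $\lambda$ large agree on $\ov A$, so $f$ agrees with them there and $\supp(f_\lambda-f)\seq A$ eventually. The one real issue is to show $f\in\FU(\fG)$, i.e.\ $\cZ(f)\in\fG$, and this is exactly where balancedness is used. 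For a fixed $A'\in\fG^{\pr}$, choosing $\lambda$ so large that $f=f_\lambda$ on $\ov{A'}$, I combine $\cZ(f_\lambda)\in\fG$ with $A'\in\fG^{\pr}$ (so that $A'\cup\cZ(f_\lambda)$ is cofinite) to get that $\supp(f)\cap\ov{A'}=\supp(f_\lambda)\cap\ov{A'}$ is finite; hence $\cZ(f)\cup A'$ is cofinite for every $A'\in\fG^{\pr}$, which says precisely $\cZ(f)\in\fG^{\pr\pr}$. Since $\fG$ is balanced, $\fG^{\pr\pr}=\fG$, so $\cZ(f)\in\fG$ and $f\in\FU(\fG)$. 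I expect the crux to be this last observation: the pointwise limit lands a priori only in $\FU(\fG^{\pr\pr})$, and balancedness is precisely the hypothesis that pulls it back into $\FU(\fG)$.

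Part~3 is a density computation in the same spirit. Finite-support maps belong to $\FU(\fG)$ exactly because $\Cof(G)\seq\fG$. Given $f\in\FU(\fG)$ and a basic neighborhood $U(A,\fG)$ with $A\in\fG^{\pr}$, I would approximate $f$ by its truncation $h$, defined by $h(g)=f(g)$ for $g\in\ov A$ and $h(g)=0$ otherwise. Then $f-h$ vanishes on $\ov A$, so $f-h\in U(A,\fG)$, while $\supp(h)=\supp(f)\cap\ov A$ is finite by the same cofiniteness argument as in part~2 (from $\cZ(f)\in\fG$ and $A\in\fG^{\pr}$). Letting $A$ range over $\fG^{\pr}$ shows that every neighborhood of $f$ meets the finite-support maps, which is the asserted density.
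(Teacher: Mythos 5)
Your proof is correct. Note that the paper does not prove this theorem itself — it is quoted from \cite{DuDu2} (Thm.~1) — so there is no in-paper argument to compare against; your route (directedness of the $U(A,\fG)$ via Lemma~\ref{ua-left}, Hausdorffness from $\Cof(G)\seq\fG^{\pr}$, pointwise limits of Cauchy nets landing a priori in $\FU(\fG^{\pr\pr})$ with balancedness $\fG^{\pr\pr}=\fG$ pulling the limit back into $\FU(\fG)$, and truncation $h=f\vert_{\ov A}$ for density) is exactly the natural one, and you correctly isolate the single place where balancedness is used.
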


Let us consider some examples of topologies $T(\fG)$.

\begin{exam}\label{top}
1) If $\fG=\Cof(G)$, then $\FU(\fG)$ consists of all functions with
finite support and $\fG^\pr=\cP(G)$. Then $\pty\in \fG^\pr$, hence
$U(\pty, G)=\{0\}$ is an open set. It follows that every subset of
$\FU(\fG)$ is open and closed, hence $T(\fG)$ is a discrete
topology.

2) If $\fG=\cP(G)$, then $\fG^\pr=\Cof(G)$, $\FU(\fG)=\Map(G,K)$ and
$A$ runs over all cofinite subsets of $G$. Thus we obtain the
Tychonoff topology whose subbase is given by the subspaces
$U_g=\{f\in \Map(G,K)\mid f(g)=0\}$.

3) Suppose that $(G, \leq)$ is a linearly ordered set and
$\fG=\CA(G)$. Then $\fG^\pr=\CD(G)$, therefore the base of zero
neighborhoods is given by $U_D=\{f\in \FU(\fG)\mid f(g)=0$ for every
$g\in D\}$, where  $D$ is a well ordered subset of $G$.

Note that, if $G=(\Q, \leq)$, then the space $\Map(\Q,K)$ with
Tychonoff topology is metrizable and separable. This is not longer
true for the topology $T(\CA G)$. Indeed, suppose that $D_1, D_2,
\dots$ are well ordered subsets of $\Q$ such that $U_{D_1}, U_{D_2},
\dots$ form a basis of zero neighborhoods. This means that for every
well ordered $D\seq \Q$ there is $k$ such that $U_{D_1}\cap \ldots
\cap U_{D_k}\seq U_D$. It follows easily that $D\seq D_1\cup \ldots \cup D_k$.

Clearly there exists an ascending sequence $d_1 < d_2< \dots$ such that
$d_k\notin D_1 \cup \dots \cup D_k$ for every $k$. If $D=\{d_1, d_2, \dots\}$,
then $D$ has a d.c.c, therefore $D\seq D_1\cup \ldots \cup D_k$ for some $k$,
and then $d_k\in D_1\cup \ldots \cup D_k$, a contradiction.

This shows that in the space $(\FU(\CA \Q), T(\CA \Q))$ no point has a
countable base of neighborhoods, in particular, this space is not separable.

4) Let $G=\lan t \ran$ be an infinite cyclic group with the usual
linear ordering: $t^n \geq t^m$ iff $n\geq m$. Let $\fG=\CD(G)$,
that is, $\fG$ is generated by the following collection of sets:
$\{t^n\mid n<l\}$, $l\in \Z$. Then $\fG^\pr$ is generated by
$\{t^n\mid n > m\}$, $m\in \Z$, hence $\FU(\fG)$ is the space of
Laurent power series $\sum_{i\geq l} k_i t^i$ and $T(\fG)$ is the
$t$-adic topology.
\end{exam}

\section{Direct sum decompositions}\label{S-Dir-Sum}

Let $C$ be a subset of $G$. We identify $\Map(C,K)$ with a subspace
of $\Map(G,K)$ consisting of all maps $f: G\to K$ such that the
restriction of $f$ to $\ov C$ is zero. If $\fG$ is a filter on $G$,
then the family $\fG_C=\{A\cap C\mid A\in \fG\}$ will be a filter on
$G$ called an \emph{induced filter}.
%Clearly, if $\fG$ contains the Frechet filter on $G$ then $\fG_C$ contains the
%Frechet filter on $C$.
Note that with respect to the above identification, $\FU(\fG_C)\seq
\FU(\fG)$. Indeed, if $f\in \Map(C,K)$, then $f\in \FU(\fG_C)$ means
that $\cZ(f)=A\cap C$ for some $A\in \fG$. If we consider $f$ as a
map from $G$ to $K$, then $\cZ(f)=(A\cap C)\cup \ov C\sep A$, hence
$f\in \FU(\fG)$.

\begin{prop}\label{induced}
Let $C\seq G$. Then the topology $T(\fG_C)$ on the space
$\FU(\fG_C)$ coincides with the topology induced by $T(\fG)$.
\end{prop}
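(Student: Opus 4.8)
The plan is to prove something slightly stronger than mere coincidence of the two topologies, namely that the two prescribed bases of zero-neighborhoods are the \emph{same} family of subsets of $\FU(\fG_C)$. By Theorem~\ref{top-UA} the topology $T(\fG)$ has base $\{U(A,\fG)\mid A\in \fG^\pr\}$, so the subspace topology induced on $\FU(\fG_C)\seq \FU(\fG)$ has base $\{U(A,\fG)\cap \FU(\fG_C)\mid A\in \fG^\pr\}$; meanwhile $T(\fG_C)$ has base $\{U(B,\fG_C)\mid B\in (\fG_C)^\pr\}$, the perp being computed in $C$. The first thing I would record is the elementary identity $U(A,\fG)\cap \FU(\fG_C)=U(A\cap C,\fG_C)$. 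This holds because every $f\in \FU(\fG_C)$ already has $\supp(f)\seq C$, so the defining condition $\supp(f)\seq A$ is equivalent to $\supp(f)\seq A\cap C$.

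The real content is then the filter-theoretic identity
\[
(\fG_C)^\pr=\{A\cap C\mid A\in \fG^\pr\}.
\]
For the inclusion $\sep$ I would take $A\in \fG^\pr$ and verify $A\cap C\in (\fG_C)^\pr$ directly: an arbitrary member of $\fG_C$ has the form $A'\cap C$ with $A'\in \fG$, and $(A\cap C)\cup(A'\cap C)=(A\cup A')\cap C$ has finite complement in $C$ because $A\cup A'$ is cofinite in $G$. For the reverse inclusion $\seq$, given $B\in (\fG_C)^\pr$ I would set $A=B\cup \ov C$. Since $B\seq C$ one has $A\cap C=B$, and for each $A'\in \fG$ the complement of $A\cup A'$ in $G$ equals $C\sm(B\cup(A'\cap C))$, which is finite because $A'\cap C\in \fG_C$ and $B\in (\fG_C)^\pr$; hence $A\in \fG^\pr$ and $B$ lies in the right-hand collection.

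Combining the two facts, the induced base $\{U(A,\fG)\cap \FU(\fG_C)\mid A\in \fG^\pr\}=\{U(A\cap C,\fG_C)\mid A\in \fG^\pr\}$ coincides set-for-set with the base $\{U(B,\fG_C)\mid B\in (\fG_C)^\pr\}$ of $T(\fG_C)$, which yields the proposition at once. I expect the reverse inclusion of the displayed identity to be the only real obstacle: the naive choice $A=B$ fails, since the points of $G$ lying outside $C$ are not controlled and $B\cup A'$ may well have infinite complement in $G$. The device of replacing $B$ by $B\cup \ov C$ absorbs all of $\ov C$ into $A$ and thereby restores cofiniteness in $G$, while leaving the trace on $C$ unchanged; this is exactly what is needed to match the two filters.
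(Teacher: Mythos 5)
Your proof is correct and follows essentially the same route as the paper's: both directions rest on the identical computations, namely that $(A\cup A')\cap C$ is cofinite in $C$ for the inclusion $\{A\cap C\mid A\in\fG^\pr\}\seq(\fG_C)^\pr$, and the device of adjoining $\ov C$ (your $A=B\cup\ov C$ is exactly the paper's $B=A_1\cup\ov C$) for the reverse. The only difference is cosmetic: you package the two inclusions as the single identity $(\fG_C)^\pr=\{A\cap C\mid A\in\fG^\pr\}$ and conclude that the two bases coincide set-for-set, where the paper is content with mutual refinement of the bases.
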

\begin{proof}
Note that $A\cap C\in \fG^\pr_C$ for every $A\in \fG^\pr$.  Indeed,
every set from $\fG_C$ can be written in the form $A'\cap C$ for
some $A'\in \fG$. Since $A\cup A'$ is cofinite in $G$, it follows
that $(A\cap C)\cup (A'\cap C)=(A\cup A')\cap C$ is cofinite in $C$.
Thus $A\cap C\in \fG_C^\pr$ and clearly $U(A,\fG)\cap \FU(\fG_C)\seq
U(A\cap C, \fG_C)$.

Now take any $A_1\in \fG_C^\pr$. We claim that $B=A_1\cup \ov C\in
\fG^\pr$ which would imply the inclusion $U(A_1, \fG_C)\seq U(B,
\fG)\cap \FU(\fG_C)$. Indeed, for every $A'\in \fG$ the set $A_1\cup
(A'\cap C)$ is cofinite in $C$. Then $(A_1\cup \ov C)\cup A'$ is
cofinite in $G$ because it contains $A_1\cup (A'\cap C)$ (cofinite
in $C$) and $\ov C$.
\end{proof}

Let $A$ be a subset of $G$. For every map $f: G\to K$ we have
$f=f\vert_A + f\vert_{\ov A}$, where $\supp(f\vert_A)\seq A$ and
$\supp(f\vert_{\ov A})\seq \ov A$. This yields a decomposition of
linear spaces: $\FU(\fG)=\FU(\fG_A)\oplus \FU(\fG_{\ov A})$, where
$\FU(\fG_A)\seq \Map(A,K)$ and $\FU(\fG_{\ov A})\seq \Map(\ov A,
K)$.

In the following proposition we will single out two important
particular cases.

\begin{prop}\label{dec}
1) If $A\in \fG$ then $\FU(\fG)=\FU(\fG_A)\oplus \Map(\ov A, K)$.
Furthermore, $T(\fG)$ induces the Tychonoff topology on $\Map(\ov A,
K)$.

2) If $\fG\sep \Cof(G)$ and $A\in \fG^\pr$, then $\FU(\fG)=U(A,
\fG)\oplus \FU(\Cof \ov A)$. Furthermore, $T(\fG)$ induces the
discrete topology on $\FU(\Cof \ov A)$.
\end{prop}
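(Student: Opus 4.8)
The plan is to prove both parts by combining the general direct-sum decomposition $\FU(\fG)=\FU(\fG_A)\oplus\FU(\fG_{\ov A})$ (stated just before the proposition) with the computation of the induced filters on $A$ and $\ov A$. The whole proof reduces to identifying what the induced filters $\fG_A$ and $\fG_{\ov A}$ are in each of the two special cases, and then invoking Proposition~\ref{induced} (which says $T(\fG_C)$ is the topology induced by $T(\fG)$) together with the explicit examples computed in Example~\ref{top}.

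For part 1), the key observation is that if $A\in\fG$, then the induced filter $\fG_{\ov A}=\{X\cap\ov A\mid X\in\fG\}$ must equal $\cP(\ov A)$: indeed $A\cap\ov A=\pty$ already lies in $\fG_{\ov A}$ since $A\in\fG$, so $\fG_{\ov A}$ contains the empty set and hence is the full power set of $\ov A$. Therefore $\FU(\fG_{\ov A})=\Map(\ov A,K)$, which gives the claimed decomposition $\FU(\fG)=\FU(\fG_A)\oplus\Map(\ov A,K)$. For the topological statement, by Proposition~\ref{induced} the topology induced by $T(\fG)$ on $\Map(\ov A,K)$ is $T(\fG_{\ov A})=T(\cP(\ov A))$, and by Example~\ref{top}, 2) the topology $T(\cP(\ov A))$ is precisely the Tychonoff topology on $\Map(\ov A,K)$. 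This finishes part 1).

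For part 2), I would proceed dually. Here we assume $\fG\sep\Cof(G)$ and $A\in\fG^\pr$, and I first note that $U(A,\fG)=\FU(\fG_A)$ by the identification $\supp(f)\seq A$ with the subspace $\Map(A,K)$. The crux is to show the induced filter on $\ov A$ is the Frechet filter $\Cof(\ov A)$. One inclusion is easy since $\Cof(G)\seq\fG$ forces $\Cof(\ov A)\seq\fG_{\ov A}$. For the reverse, take $X\in\fG$ and observe that since $A\in\fG^\pr$, the set $A\cup X$ is cofinite in $G$, so its complement $\ov A\cap\ov X=\ov A\sm X$ is finite; hence $X\cap\ov A$ is cofinite in $\ov A$, giving $\fG_{\ov A}\seq\Cof(\ov A)$. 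Thus $\FU(\fG_{\ov A})=\FU(\Cof\ov A)$ and the decomposition follows. Finally, Proposition~\ref{induced} identifies the induced topology as $T(\Cof\ov A)$, which is discrete by Example~\ref{top}, 1).

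The main obstacle, modest as it is, lies in correctly computing the induced filters and matching the notation: one must be careful that the hypothesis $A\in\fG^\pr$ is exactly what converts ``$A\cup X$ cofinite for all $X\in\fG$'' (the definition of $\fG^\pr$) into the statement that traces of elements of $\fG$ on $\ov A$ are cofinite in $\ov A$. Once these filter identities are in hand, both parts are immediate consequences of the pre-established decomposition, Proposition~\ref{induced}, and the two extreme cases of Example~\ref{top}.
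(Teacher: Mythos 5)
Your proof is correct and follows essentially the same route as the paper: both rest on the decomposition $\FU(\fG)=\FU(\fG_A)\oplus \FU(\fG_{\ov A})$, compute the induced filters ($\fG_{\ov A}=\cP(\ov A)$ when $A\in\fG$, and $\fG_{\ov A}=\Cof(\ov A)$ when $A\in\fG^\pr$), and then invoke Proposition~\ref{induced} together with Example~\ref{top}, parts 2) and 1). Your write-up merely makes explicit two points the paper leaves implicit, namely the identification $U(A,\fG)=\FU(\fG_A)$ and the two inclusions behind $\fG_{\ov A}=\Cof(\ov A)$ (where, as you rightly note, $\Cof(G)\seq\fG$ is needed for one direction).
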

\begin{proof}
1) Since $A\cap \ov A=\pty\in \FU(\fG_{\ov A})$, the induced filter on $\ov A$ coincides
with $\cP(\ov A)$, hence $\FU(\fG_{\ov A})=\Map(\ov A, K)$. Thus, by
Proposition~\ref{induced} and Example~\ref{top} 2), the topology
induced on $\Map(\ov A, K)$ by $T(\fG)$ will be Tychonoff.
It remains to apply the equality $\FU(\fG)=\FU(\fG_A)\oplus
\FU(\fG_{\ov A})$.

2) By the definition of $\pr$-operation, we obtain $\fG_{\ov A}=\Cof(\ov A)$.
Again, by Proposition~\ref{induced} and Example~\ref{top} 1),
$T(\fG)$ induces discrete topology on $\FU(\fG_{\ov A})$. Now the
result follows from the same equality.
\end{proof}

\section{Involution and pairing}\label{S-Pairing}

Now assume that $G$ is a set with an involution $*$, that is, with a
map $G\to G$ such that $g^{**}=g$. If $A$ is a subset of $G$ then
we define  $A^*=\{a^*\mid a\in A\}$, and we put $\fG^*=\{A^*\mid
A\in \fG\}$, if $\fG$ is a filter on $G$.

The following remark is obvious.

\begin{remark}\label{star}
If $\fG$ is a filter, then $\fG^*$ is also a filter. Furthermore,
the map $\fG\to \fG^*$ defines an automorphism of the lattice of
filters on $G$. For instance, $\fG^{\pr *}=\fG^{*\pr}$ for every
filter $G$.
\end{remark}

A filter $\fG^{\pr *}$ is said to be \emph{adjoint} to the filter
$\fG$. Thus a filter $\fG$ is \emph{self-adjoint}, if $\fG^{\pr *}=
\fG$. Note that every self-adjoined filter is balanced. Indeed,
$\fG^{\pr\pr}=\fG^{\pr\pr
**}=\fG^{\pr *\pr *}=(\fG^{\pr *})^{\pr *}=\fG^{\pr *}=\fG$. For instance, let
$G$ be a linearly orderer group, $\fG=\CD(G)$ and the involution $*$
is given by $g^*=g^{-1}$. Then $\CD(G)^{\pr *}=\CA(G)^*$. Since
taking the inverse in the linearly ordered group reverses the
ordering, $\CA(G)^*=\CD(G)$, therefore $\CD(G)$ is self-adjoint, and
the same is true for $\CA(G)$.

From now on each group $G$ will be considered as a group with the
involution $*: g\to g^{-1}$.

Suppose that $G$ is a group with a linear ordering $\leq$. We say that $G$ is
\emph{left (right) ordered}, if $g_1\leq g_2$ implies $hg_1\leq hg_2$ ($g_1h\leq g_2h$)
for every $g_1, g_2, h\in G$. A group $G$ is said to be \emph{linearly ordered},
if it is left and right ordered with respect to $\leq$.

\begin{prop}\label{left-right}
A left ordered group $(G,\leq)$ is linearly ordered iff the filter
$\CD(G)$ (or $\CA(G)$) is self-adjoint.
\end{prop}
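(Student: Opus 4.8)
The forward implication is essentially already contained in the discussion preceding the statement: if $G$ is linearly ordered then inversion $g\mapsto g^{-1}$ reverses the order, and an order anti-automorphism carries well ordered sets to sets with the a.c.c.\ and back, so $\CA(G)^{*}=\CD(G)$; since $\CD(G)^{\pr}=\CA(G)$ by Fact~\ref{acc-bal}, this reads $\CD(G)^{\pr *}=\CD(G)$, i.e.\ $\CD(G)$ is self-adjoint. The real content is the converse, and the plan is to reduce it to a statement about inversion and then to one explicit construction. First I would record the bridge lemma: \emph{for a left ordered group, being right ordered} (hence linearly ordered) \emph{is equivalent to inversion being order-reversing}. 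One direction is the computation just mentioned; for the other, from $a\le b$ and $a^{-1}\ge b^{-1}$ one left-multiplies by $c^{-1}$ and inverts to obtain $ac\le bc$. This uses only left-invariance and is routine.

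Next I would translate self-adjointness into a usable form. Since $\CD(G)^{\pr}=\CA(G)$ (Fact~\ref{acc-bal}), self-adjointness $\CD(G)^{\pr *}=\CD(G)$ reads $\CA(G)^{*}=\CD(G)$, so in particular $A^{*}\in\CA(G)$ for every $A\in\CD(G)$. Writing $D=\ov A$ and using $\ov{A^{*}}=(\ov A)^{-1}=D^{-1}$, this says precisely that the inverse set $D^{-1}$ of every well ordered subset $D\seq G$ has the a.c.c. This single consequence is all I will need.

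The heart of the proof is the contrapositive of the converse. Suppose $G$ is not linearly ordered; by the bridge lemma inversion is not order-reversing, so there exist $p<q$ with $p^{-1}<q^{-1}$. Set $c=p$ and $x=p^{-1}q$, so that $x>e$ while $cxc^{-1}=qp^{-1}<e$. I would then test the set $D=\{\,cx^{n}\mid n\ge 0\,\}$. By left-invariance and $x>e$ we get $c<cx<cx^{2}<\cdots$, so $D$ has order type $\omega$ and is well ordered. However $(cx^{n})^{-1}=x^{-n}c^{-1}$, and this sequence is strictly \emph{increasing}: from $cxc^{-1}<e$ one has $(cxc^{-1})^{-1}=cx^{-1}c^{-1}>e$, hence $x^{-1}c^{-1}>c^{-1}$, and left-multiplying by $x^{-n}$ gives $x^{-(n+1)}c^{-1}>x^{-n}c^{-1}$. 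Thus $D^{-1}$ contains an infinite ascending chain and fails the a.c.c., contradicting the consequence of self-adjointness established above. This forces $G$ to be linearly ordered; the statement for $\CA(G)$ follows at once, since $\CD(G)$ and $\CA(G)$ are self-adjoint simultaneously (apply $\pr$ and Remark~\ref{star}).

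The main obstacle is finding the right witness. A naive guess such as a geometric progression $\{x^{n}\}$ or a left translate $c\{x^{n}\}$ never works: the inverse of such a set is again a \emph{left} translate of $\{x^{-n}\}$, hence order anti-isomorphic to it and perfectly well behaved. The decisive observation is that $(cx^{n})^{-1}=x^{-n}c^{-1}$ is a \emph{right} translate, whose internal order is governed by conjugation rather than by left-invariance; the failure of right-invariance encoded in $cxc^{-1}<e$ is exactly what makes the inverse sequence ascend instead of descend. Pinpointing the family $cx^{n}$ together with the correct left-factorization $x^{-(n+1)}c^{-1}=x^{-n}(x^{-1}c^{-1})$ is thus the crux; everything else is bookkeeping with the filter identities $\CD(G)^{\pr}=\CA(G)$ and Remark~\ref{star}.
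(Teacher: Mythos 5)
Your proof is correct and follows essentially the same route as the paper's: both reduce the failure of right-invariance to a single conjugation inequality (your $cxc^{-1}<e$ with $x>e$, the paper's $a^{-1}ba<e$ with $b>e$) and then contradict the identity $\CA(G)=\CD(G)^*$ using a chain of translated powers of that element. The only cosmetic differences are that you argue contrapositively via the ``inversion is order-reversing'' lemma where the paper argues via invariance of the positive cone $P=\{g\mid g\geq e\}$, and that your witness $\{cx^n\}$ is well ordered with inverse $\{x^{-n}c^{-1}\}$ failing the a.c.c., while the paper's witness $\{b^na\}$ has the a.c.c.\ with inverse $\{a^{-1}b^{-n}\}$ failing the d.c.c. --- dual applications of the same identity.
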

\begin{proof}
We have already proved that for a linearly ordered group $G$,
both $\CD(G)$ and $\CA(G)$ are self-adjoint filters.

Suppose that $G$ is a left ordered group and $\CD(G)$ is a
self-adjoint filter. This means that $\CD(G)^{\pr *}=\CD(G)$, which
is the same as $\CD(G)^\pr=\CD(G)^*$ or $\CA(G)=\CD(G)^*$.

To prove that $G$ is linearly ordered it suffices to check that the
cone $P=\{g\in G\mid g\geq e\}$ is invariant, that is, $a^{-1}Pa=P$
for every $a\in G$. Moreover it is enough to verify that $a^{-1} P
a\seq P$ for every $a\in G$. Assuming otherwise we will find $a,
b\in G$ such that $b > e$ and $a^{-1}ba < e$. Multiplying $ba< a$ on
the left by $b$ we obtain a descending chain $\Del=\{a> ba> b^2 a
> \dots\}$. Since $\Del$ has an a.c.c., $\ov \Del\in \CA(G)$. From $\CA(G)=\CD(G)^*$
it follows that $\ov \Del^*\in \CD(G)$ where $\Del^*=\{a^{-1}b^{-n}\mid n\in \om\}$,
hence $\Del^*$ has a smallest element. If $a^{-1}b^{-n}$ is
such, then $a^{-1} b^{-n} \leq a^{-1} b^{-n-1}$. Multiplying by $b^{n+1}a$ on the left
we obtain $b\leq e$, a contradiction.
\end{proof}

Now we define a \emph{pairing} on the space $\Map(G,K)$ as the
following partially defined non-degenerate bilinear form. If $f,
h\in \Map(G,K)$ then
\begin{equation}
\lan f, h\ran= \sum_{x\in G} f(x) h(x^*)= \sum_{x\in G} f(x^*) h(x)
\label{hhh}
\end{equation}
and the result $\lan f, h\ran$ is defined, if $\supp(f)\cap (\supp h)^*$ is a finite set.
Clearly this is the same as
$(\supp f)^*\cap \supp(h)$ is finite. In particular, this is the case
when $f\in \FU(\fG)$ and $h\in \FU(\fG^{\pr *})$ or vice versa. If
$K$ is a field, this form is symmetric.

We say that the equality (\ref{hhh}) defines a \emph{pairing} between
subspaces $L$ and $L'$ of $\Map(G,K)$, if the following holds true:

1) the product $\lan f, h \ran$ is defined for all $f\in L$, $h\in
L'$;

2) if $f\in \Map(G,K)$ and the product $\lan f, h\ran$ is defined
for every $h\in L'$, then $f\in L$;

3) if $h\in \Map(G,K)$ and the product $\lan f, h\ran$ is defined
for every $f\in L$, then $h\in L'$.

In this case $(L,L')$ is said to be a \emph{dual pair}.

\begin{prop}\label{hh}
If $\fG$ is a balanced filter on a set $G$ with an involution, then
$(\FU(\fG^{\pr*}), \FU(\fG))$ is a dual pair and the same is true
for the pair $(\FU(\fG), \FU(\fG^{\pr *}))$.
\end{prop}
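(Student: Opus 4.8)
The plan is to verify the three defining conditions of a dual pair for $(\FU(\fG^{\pr*}), \FU(\fG))$ directly from the definition of the pairing, reducing everything to the combinatorics of zero-sets and the $\pr$-operation on filters. Condition 1) is essentially already recorded in the text: if $f\in \FU(\fG^{\pr*})$ and $h\in \FU(\fG)$, then $\supp(f)\seq \ov{\cZ(f)}$ with $\cZ(f)\in\fG^{\pr*}$, and similarly $\supp(h)\seq\ov{\cZ(h)}$ with $\cZ(h)\in\fG$. I would show $(\supp f)^*\cap\supp(h)$ is finite by observing that $\cZ(f)^*\in\fG^{\pr}$ (since $\cZ(f)\in\fG^{\pr*}$ means $\cZ(f)=B^*$ for some $B\in\fG^{\pr}$, whence $\cZ(f)^*=B\in\fG^\pr$), and then by the definition of $\fG^{\pr}$ the set $\cZ(f)^*\cup\cZ(h)$ is cofinite in $G$; its complement $\ov{\cZ(f)^*}\cap\ov{\cZ(h)}\sep(\supp f)^*\cap\supp(h)$ is therefore finite, so the pairing is defined.

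Conditions 2) and 3) are the substance. For 3): suppose $h\in\Map(G,K)$ is such that $\lan f,h\ran$ is defined for every $f\in\FU(\fG^{\pr*})$; I must conclude $\cZ(h)\in\fG$, i.e.\ $h\in\FU(\fG)$. The hypothesis says $(\supp f)^*\cap\supp(h)$ is finite for all such $f$, equivalently $\supp(f)^*\cap\supp(h)$ is finite whenever $\cZ(f)\in\fG^{\pr*}$. I would reformulate this in terms of the set $S=\supp(h)$: the condition is that $A^*\cap S$ is finite for every $A=\supp(f)$ arising from some $f\in\FU(\fG^{\pr*})$, and crucially every subset $A$ with $\ov A\in\fG^{\pr*}$ occurs as such a support (take $f$ to be the indicator of $A$, valued in $K^\times$). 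So the hypothesis becomes: for every $A\seq G$ with $\ov A\in\fG^{\pr*}$, the set $A^*\cap S$ is finite. Passing to complements and applying $*$, this should translate into the statement that $\ov S\in\fG^{\pr*\pr}$, and then balancedness together with Remark~\ref{star} must be invoked to identify $\fG^{\pr*\pr}$ with $\fG$. Concretely, since $\fG$ is balanced, $\fG^{\pr\pr}=\fG$, and $(\fG^{\pr*})^{\pr*}=\fG^{\pr*\pr*}=\fG^{\pr\pr}=\fG$, so $\fG$ is self-adjoint, giving $\fG^{\pr*}=\fG^{\pr*}$ cleanly and letting me rewrite the membership condition as $\cZ(h)=\ov S\in\fG$.

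The heart of the argument, and the step I expect to be the main obstacle, is the combinatorial equivalence in 3) between ``$A^*\cap S$ finite for all $A$ with $\ov A\in\fG^{\pr*}$'' and ``$\ov S\in\fG$''. Unwinding the definition, $\fG^\pr=\{B\mid B\cup B'\text{ cofinite for all }B'\in\fG\}$, so membership $\ov S\in\fG=\fG^{\pr\pr}$ means $\ov S\cup B$ is cofinite for every $B\in\fG^\pr$; equivalently $S\cap\ov B=S\setminus(\ov S\cup B)^{\text{complement}}$ is finite for every $B\in\fG^\pr$, i.e.\ $S\cap C$ is finite for every $C$ with $\ov C\in\fG^\pr$. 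Applying the involution (and using $\fG^{\pr*}=(\fG^\pr)^*$ from Remark~\ref{star}), ``$\ov C\in\fG^\pr$'' becomes ``$\ov{C^*}\in\fG^{\pr*}$,'' which matches ``$\ov A\in\fG^{\pr*}$'' upon setting $A=C^*$ and $A^*=C$. Thus the finiteness of $A^*\cap S$ over all admissible $A$ is exactly the membership $\ov S\in\fG^{\pr\pr}=\fG$, and self-adjointness is what lets the two $\pr$'s and the single $*$ collapse correctly. Condition 2) is proved by the symmetric argument with the roles of the two factors interchanged, using that $\fG^{\pr*}$ is itself balanced (as every self-adjoint filter, hence $\fG^{\pr*}$, is balanced by the computation in the text); and the final assertion about $(\FU(\fG),\FU(\fG^{\pr*}))$ follows by applying the result just proved to the balanced filter $\fG^{\pr*}$ in place of $\fG$, since $(\fG^{\pr*})^{\pr*}=\fG$.
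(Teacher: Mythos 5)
Your proof is correct and follows essentially the same route as the paper's: condition 1) holds by the definition of $\pr$, and for 2) and 3) the paper likewise tests against characteristic functions of complements $G\sm B$ with $B$ in the relevant filter (your indicator-of-$A$ trick is the same device), translates finiteness of $(\supp f)^*\cap \supp(h)$ into cofiniteness of $\cZ(f)^*\cup B$ via the definition of $\pr$, and closes 3) with $(\fG^{\pr *})^{\pr *}=\fG^{\pr\pr}=\fG$ by balancedness --- exactly your chain of equivalences. One caveat: two of your justificatory asides are false, though harmless because your actual argument never uses them. The identity $(\fG^{\pr *})^{\pr *}=\fG$ does \emph{not} say that $\fG$ is self-adjoint (self-adjointness means $\fG^{\pr *}=\fG$, which fails for balanced filters in general: take $\CD(G)$ on a linearly ordered set with the identity involution, where $\CD(G)^{\pr *}=\CA(G)\neq \CD(G)$, yet $\CD(G)$ is balanced by Fact~\ref{acc-bal}); and $\fG^{\pr *}$ is balanced not ``as a self-adjoint filter'' (it need not be one) but because $\fG^{\pr}$ is always balanced by Lemma~\ref{two-perp} and $*$ commutes with $\pr$ by Remark~\ref{star}. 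What your combinatorial equivalence genuinely uses --- $\fG^{\pr\pr}=\fG$ and $\fG^{\pr *}=\fG^{*\pr}$ --- is available, so the proof stands as written once those two remarks are corrected; note also that your argument for condition 2), like the paper's, in fact needs no balancedness at all, since it lands directly in $(\fG^{*})^{\pr}=\fG^{\pr *}$ without any double-$\pr$ collapse.
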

\begin{proof}
1) is satisfied by the definition of $\fG^\pr$.

2) Let $f\in \Map(G,K)$ is such that $\lan f, h \ran$ is defined for
every $h\in \FU(\fG)$. Then $(\supp f)^*\cap \supp(h)$ is finite for
every $h\in \FU(\fG)$, therefore $\cZ(f)^*\cup \cZ(h)$ is a cofinite
set for every such $h$. Take any $B\in \fG$ and let $h_B$ be the
characteristic function of $G\sm B$ (that is, $h_B(g)=0$ if $g\in
B$, and $h_B(g)=1$ otherwise). Clearly $h_B\in \FU(\fG)$. It follows
that $\cZ(f)^*\cup B$ is a cofinite set, therefore, by the
definition of $\pr$, we obtain $\cZ(f)^*\in \fG^\pr$. It follows
that $\cZ(f)\in \fG^{\pr *}$ hence $f\in \FU(\fG^{\pr *})$.

3) Suppose that $h\in \Map(G,K)$ and the result $\lan f, h\ran$ is
defined for every $f\in \FU(\fG^{\pr *})$. As in the proof of 2) it
follows that $\cZ(h)\in (\fG^{\pr *})^{\pr *}=\fG^{\pr\pr}=\fG$,
since $\fG$ is balanced.
\end{proof}

Thus we obtain the following diagram of pairing:

%\marginpar{s9}
$$
%R_1 \hspace{1cm}
\vcenter{
\def\labelstyle{\displaystyle}
\xymatrix@R=20pt@C=20pt{%
\FU(\Cof
G)\ar@{^{(}->}[r]\ar@{<->}[d]&\FU(\fG_1)\ar@{^{(}->}[r]\ar@{<->}[d]&
\FU(\fG_2)\ar@{^{(}->}[r]\ar@{<->}[d]&\Map(G,K)\ar@{<->}[d]\\
\Map(G,K)&\FU(\fG_1^{\pr *})\ar@{_{(}->}[l]&\FU(\fG_2^{\pr
*})\ar@{_{(}->}[l]&
\FU(\Cof G)\ar@{_{(}->}[l]\,,\\
} }
$$
where $\longleftrightarrow$ stands for pairing, and $\fG_1\seq
\fG_2$ are balanced filters.

Because $\FU(\fG)$ and $\FU(\fG^{\pr *})$ are paired, it follows
that $\FU(\fG^{\pr *})$ is isomorphic to the space of linear
forms on $\FU(\fG)$, that is to the space of linear continuous maps from
$\FU(\fG)$ to $K$. We will derive this fact later from a more
general description of continuous linear operators on spaces of
$\fG$-zero functions.

\section{Filters on direct products}\label{S-filter}

Let $\fG$ be a filter on a set $G$, and let $\fH$ be a filter on
$H$. In this section we consider different extensions of these
filters to a filter on the direct product $H\times G$. One of these
extensions is well known.

\begin{fact}(see \cite[Sec.~6.7]{Bou})
The family $\{B\ts A\mid B\in \fH, A\in \fG\}$ is a base of a filter
$\fH\ts \fG$ on $H\ts G$. This filter is proper iff both $\fG$ and
$\fH$ are proper.
\end{fact}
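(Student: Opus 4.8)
The plan is to verify the two filter-base axioms from Section~\ref{S-Fil} for the collection $\cL=\{B\ts A\mid B\in \fH,\, A\in \fG\}$, and then to read off properness by locating exactly when the empty set enters the generated filter.

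First I would observe that $\cL$ is nonempty, since $\fH$ and $\fG$ are nonempty by the definition of a filter (so $G\in \fG$, $H\in \fH$, and $H\ts G\in \cL$). The key computation is the distributive identity for Cartesian products: for $B_1\ts A_1,\, B_2\ts A_2\in \cL$ one has
\[
(B_1\ts A_1)\cap (B_2\ts A_2)=(B_1\cap B_2)\ts(A_1\cap A_2).
\]
Since $\fH$ is closed under finite intersections, $B_1\cap B_2\in \fH$, and likewise $A_1\cap A_2\in \fG$; hence the right-hand side again lies in $\cL$. Thus for any two members of $\cL$ their intersection is itself a member of $\cL$, which in particular supplies the required $C\in \cL$ with $C\seq (B_1\ts A_1)\cap(B_2\ts A_2)$. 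Therefore $\cL$ is a filter base, and $\fH\ts \fG$ is a well-defined filter on $H\ts G$.

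For the properness claim I would use the criterion, recalled in Section~\ref{S-Fil}, that a filter is proper iff it does not contain $\pty$. Since every element of $\fH\ts \fG$ is a superset of some $B\ts A\in \cL$, we have $\pty\in \fH\ts \fG$ iff $B\ts A=\pty$ for some $B\in \fH$, $A\in \fG$. A product of sets is empty exactly when one of its factors is empty, so this occurs iff $\pty\in \fH$ or $\pty\in \fG$, that is, iff $\fH$ or $\fG$ is improper. Negating, $\fH\ts\fG$ is proper iff both $\fH$ and $\fG$ are proper.

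There is no serious obstacle here; the only point demanding a little care is the properness direction, where one must note that $\pty$ can enter the \emph{generated} filter only through an empty \emph{base} element (a superset of a nonempty set is nonempty), together with the elementary fact that a Cartesian product is empty precisely when one of its factors is empty. Everything else reduces to the product-intersection identity combined with the filter axioms already recalled.
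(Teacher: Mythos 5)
Your proof is correct and is the standard argument: the paper states this as a Fact with a citation to Bourbaki rather than proving it, and your verification via the identity $(B_1\ts A_1)\cap(B_2\ts A_2)=(B_1\cap B_2)\ts(A_1\cap A_2)$ together with the observation that $\pty$ enters the generated filter only through an empty base element $B\ts A$ (which forces $B=\pty$ or $A=\pty$) is exactly the textbook route. Nothing is missing, and the properness equivalence is handled correctly under the paper's convention that $\cP(G)$ counts as a (improper) filter.
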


But this particular filter bears no significance for topological
spaces of formal sums. The following filter is more useful.

\begin{fact}[see \cite{DuDu1}, Prop.~16]\label{ten}
The family $\{\ov{\ov B\ts \ov A}\mid B\in \fH, A\in \fG\}$ form a
base of a filter $\fH\ots \fG$ on $H\ts G$. The filter $\fH\ots \fG$
is proper iff $\fG$ or $\fH$ is proper. Furthermore, $\fH\ots
\fG\seq \fH\ts \fG$.
\end{fact}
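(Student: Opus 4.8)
The plan is to work throughout with the explicit form of the proposed base elements. Writing $\ov{\,\cdot\,}$ for complementation inside the relevant ambient set, De Morgan's law gives, for $B\seq H$ and $A\seq G$,
\[
E(B,A):=\ov{\ov B\ts \ov A}=(B\ts G)\cup(H\ts A).
\]
This single identity drives all three assertions, so I would establish it first and then treat the three claims in turn. I will freely use that filters are closed under finite intersections and that $H\in\fH$, $G\in\fG$.

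For the filter-base property it is cleanest to dualize. The required condition is that, given $E(B_1,A_1)$ and $E(B_2,A_2)$, there is a third base element contained in their intersection; passing to complements, this is the same as finding $B_3\in\fH$, $A_3\in\fG$ with $\ov{B_3}\ts\ov{A_3}\sep(\ov{B_1}\ts\ov{A_1})\cup(\ov{B_2}\ts\ov{A_2})$. The union of two ``boxes'' sits inside the box whose sides are the unions of the respective sides, namely $(\ov{B_1}\cup\ov{B_2})\ts(\ov{A_1}\cup\ov{A_2})$; and since $\ov{B_1}\cup\ov{B_2}=\ov{B_1\cap B_2}$ and $\ov{A_1}\cup\ov{A_2}=\ov{A_1\cap A_2}$, I would take $B_3=B_1\cap B_2\in\fH$ and $A_3=A_1\cap A_2\in\fG$. (Equivalently, one expands $E(B_1,A_1)\cap E(B_2,A_2)$ directly and checks $E(B_1\cap B_2,A_1\cap A_2)$ lies inside it.) Nonemptiness of the family is immediate.

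The inclusion $\fH\ots\fG\seq\fH\ts\fG$ is then easy: from the displayed identity, $B\ts A\seq B\ts G\seq E(B,A)$, so each generating element $E(B,A)$ of $\fH\ots\fG$ contains the base element $B\ts A$ of $\fH\ts\fG$ and hence belongs to $\fH\ts\fG$; as every member of $\fH\ots\fG$ is a superset of such an $E(B,A)$ and $\fH\ts\fG$ is closed under supersets, the containment follows. For properness I would use that a filter generated by a base contains $\pty$ (equivalently, is improper) exactly when some base element is empty. Here $E(B,A)=\pty$ if and only if $\ov B\ts\ov A=H\ts G$; assuming $H,G\neq\pty$, equality of products with all factors nonempty forces $\ov B=H$ and $\ov A=G$, i.e. $B=A=\pty$. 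Thus some base element is empty if and only if $\pty\in\fH$ and $\pty\in\fG$, i.e. if and only if both $\fH$ and $\fG$ are improper; negating, $\fH\ots\fG$ is proper if and only if $\fG$ or $\fH$ is proper.

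Since all the arguments are elementary set theory, the only place needing genuine care is the properness claim: one must justify that a product of sets equals the whole space only when each factor is everything (which uses that the factors, hence $H$ and $G$, are nonempty) and then run the resulting biconditional in both directions. The truly degenerate case $H=\pty$ or $G=\pty$ collapses $H\ts G$ to $\pty$ and should be noted separately, but it is not the situation of interest.
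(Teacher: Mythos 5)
Your proof is correct and complete. Note that the paper itself gives no argument for this statement: it is stated as a Fact imported from \cite{DuDu1} (Prop.~16), so there is no in-paper proof to compare against; your write-up in effect supplies the missing verification. The key identity $\ov{\ov B\ts\ov A}=(B\ts G)\cup(H\ts A)$ is exactly the right engine: it makes the base condition a one-line dualization (the union of two boxes sits in the box of unions, so $E(B_1\cap B_2,A_1\cap A_2)\seq E(B_1,A_1)\cap E(B_2,A_2)$), reduces the inclusion $\fH\ots\fG\seq\fH\ts\fG$ to $B\ts A\seq B\ts G\seq E(B,A)$, and turns properness into the observation that a base element $E(B,A)$ is empty iff $\ov B\ts\ov A=H\ts G$ iff $B=A=\pty$ (when $H,G\neq\pty$), i.e.\ iff both filters are improper. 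You are also right to flag the degenerate case $H=\pty$ or $G=\pty$: there $H\ts G=\pty$ and the only filter on it is improper, so the ``proper iff $\fG$ or $\fH$ is proper'' biconditional genuinely requires both ambient sets nonempty --- a hypothesis the Fact leaves tacit, and which your separate remark handles correctly.
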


Note that, if $X$ is a subset of $H\ts G$, then $X\in \fH\ots \fG$
iff $X\sep \ov{\ov B\ts \ov A}$ iff $\ov X\seq \ov B\ts \ov A$ for
some $B\in \fH$ and $A\in \fG$.

\begin{fact}[see \cite{DuDu1}, (6)]\label{coff}
$\Cof(H)\ots \Cof(G)= \Cof(H\ts G)$.
\end{fact}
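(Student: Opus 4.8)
The plan is to reduce everything to the explicit membership criterion stated just after Fact~\ref{ten}: a set $X \seq H \ts G$ lies in $\fH \ots \fG$ precisely when $\ov X \seq \ov B \ts \ov A$ for some $B \in \fH$ and $A \in \fG$. For the two Frechet filters, $B \in \Cof(H)$ and $A \in \Cof(G)$ mean exactly that $\ov B$ and $\ov A$ are finite. Hence $X \in \Cof(H) \ots \Cof(G)$ if and only if $\ov X$ is contained in a product $S \ts T$ of a finite subset $S \seq H$ and a finite subset $T \seq G$. Both inclusions will then follow by inspecting this condition.

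First I would prove $\Cof(H) \ots \Cof(G) \seq \Cof(H \ts G)$. If $X \in \Cof(H) \ots \Cof(G)$, then by the criterion $\ov X \seq S \ts T$ with $S$ and $T$ finite; since $S \ts T$ is a finite subset of $H \ts G$, the set $\ov X$ is finite, i.e. $X$ is cofinite, so $X \in \Cof(H \ts G)$.

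For the reverse inclusion I would take an arbitrary $X \in \Cof(H \ts G)$, so that $F := \ov X$ is a finite subset of $H \ts G$. The only observation that is not purely formal is that a finite subset of a product is swallowed by the product of its two coordinate projections: setting $S = \pi_H(F)$ and $T = \pi_G(F)$, both are finite and $F \seq S \ts T$. Taking $B = \ov S \in \Cof(H)$ and $A = \ov T \in \Cof(G)$ then gives $\ov X = F \seq S \ts T = \ov B \ts \ov A$, so $X \in \Cof(H) \ots \Cof(G)$ by the criterion, which completes the argument.

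There is no genuine obstacle here; the statement is essentially the combination of two elementary facts: the complement of a cofinite set is finite, and a finite set always fits inside the product of its finite projections. The one point worth stating cleanly is this projection step, since it is what makes a single pair $(S,T)$ suffice, so that the membership criterion (which asks for one product $\ov B \ts \ov A$) applies directly rather than requiring a finite union of product sets.
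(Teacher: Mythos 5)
Your proof is correct. The paper states Fact~\ref{coff} without proof, citing \cite{DuDu1}, so there is no internal argument to compare against; your route --- the membership criterion recorded after Fact~\ref{ten} ($X\in\fH\ots\fG$ iff $\ov X\seq \ov B\ts\ov A$ for some $B\in\fH$, $A\in\fG$) combined with the projection step $F\seq \pi_H(F)\ts\pi_G(F)$ for finite $F$ --- is exactly the standard argument, and it is complete as written (including the degenerate case $\ov X=\pty$).
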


We introduce a new filter $\Cof(\fH, \fG)$ whose subbase is given by
the following collections of sets: $\{\ov{ h \ts \ov A}\mid h\in
H, A\in \fG\}$ and $\{\ov{\ov B\ts g} \mid B\in \fH, g\in G\}$. Thus
$\Cof(\fH, \fG)$ consists of subsets of $H\ts G$ whose complement is
a subset of a finite union of sets $\{h\}\ts \ov A$ and $\ov B\ts
\{g\}$. But every subset of $\{h\}\ts \ov A$, $A\in \fG$ is of the
form  $\{h\}\ts \ov A'$ for some $A'\in \fG$, and every subset of
$\ov B\ts \{g\}$, $B\in \fH$ is of the form $\ov B'\ts \{g\}$ for
some $B'\in \fH$. It follows that $\Cof(\fH, \fG)$ consists of
complements to the sets $\lt(\bigcup_{i=1}^n \{h_i\}\ts \ov A_i\rt)
\cup \lt(\bigcup_{j=1}^m \ov B_j\ts \{g_j\}\rt)$, where $g_j\in G$,
$h_i\in H$ and $A_i\in \fG$, $B_j\in \fH$.

\begin{fact}[see \cite{DuDu1}, Prop.~18]\label{cof}
$\Cof(\fH, \fG)=(\Cof H\ots \fG)\vee (H\ots \Cof G)$.
\end{fact}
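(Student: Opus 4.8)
The plan is to prove that the two filters coincide by comparing the \emph{complements} of their members, since in the excerpt both sides are described entirely in terms of how small the complement of a set is. The two tools are the characterization stated immediately after Fact~\ref{ten} — for any filters, $X \in \fH \ots \fG$ iff $\ov X \seq \ov B \ts \ov A$ for some $B \in \fH$, $A \in \fG$ — and Remark~\ref{oper-filt}, which says the join is generated by the intersections of members of the two factors.

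First I would unwind each tensor factor on the right. Applying the characterization to the filters $\Cof H$ and $\fG$, and using that $B \in \Cof H$ means $\ov B$ is finite, say $\ov B = \{h_1,\dots,h_n\}$, shows that $X \in \Cof H \ots \fG$ iff $\ov X \seq \bigcup_{i=1}^n \{h_i\} \ts \ov A$ for some $A \in \fG$ — a finite union of ``columns.'' Symmetrically, $X \in \fH \ots \Cof G$ iff $\ov X \seq \bigcup_{j=1}^m \ov B \ts \{g_j\}$ for some $B \in \fH$ — a finite union of ``rows.''

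Next I would compute the join. Each factor is a filter (Fact~\ref{ten}), hence closed under finite intersections, so the family of all intersections $X_1 \cap X_2$ with $X_1 \in \Cof H \ots \fG$ and $X_2 \in \fH \ots \Cof G$ is a filter base, and by Remark~\ref{oper-filt} the join consists of every $Y$ with $Y \sep X_1 \cap X_2$, i.e. with $\ov Y \seq \ov{X_1} \cup \ov{X_2}$. Combining this with the previous step, $Y$ lies in the join iff $\ov Y \seq \lt(\bigcup_{i=1}^n \{h_i\} \ts \ov A\rt) \cup \lt(\bigcup_{j=1}^m \ov B \ts \{g_j\}\rt)$ for a single $A \in \fG$, a single $B \in \fH$, and finite index sets. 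This is exactly the description of $\Cof(\fH,\fG)$ recalled in the paragraph preceding the statement, except that there the sets $A_i \in \fG$ and $B_j \in \fH$ are allowed to vary with $i$ and $j$.

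Finally I would reconcile the two descriptions, which is where essentially all the content lies. The inclusion of the join into $\Cof(\fH,\fG)$ is immediate, since the single-$A$, single-$B$ form is a special case of the varying form. For the reverse inclusion, given a complement $\lt(\bigcup_i \{h_i\} \ts \ov A_i\rt) \cup \lt(\bigcup_j \ov B_j \ts \{g_j\}\rt)$ with varying $A_i \in \fG$, $B_j \in \fH$, I would set $A = \bigcap_i A_i \in \fG$ and $B = \bigcap_j B_j \in \fH$; then $\{h_i\} \ts \ov A_i \seq \{h_i\} \ts \ov A$ and $\ov B_j \ts \{g_j\} \seq \ov B \ts \{g_j\}$, so the complement is absorbed into the single-$A$, single-$B$ form and the set lands in the join. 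The one point requiring care is precisely this merging of the finitely many filter sets $A_i$ (resp. $B_j$) into one via finite-intersection closure; everything else is bookkeeping on complements. I would also dispose of the degenerate cases where one of the two finite unions is empty by taking $A = G$ (resp. $B = H$), which cause no difficulty.
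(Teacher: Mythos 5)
Your argument is correct and complete; note that the paper itself offers no proof of this fact, quoting it from \cite{DuDu1}, Prop.~18, so your derivation stands as a self-contained replacement built only from what the paper recalls: the characterization $X\in\fH\ots\fG \LR \ov X\seq \ov B\ts\ov A$ stated after Fact~\ref{ten}, the description of the join in Remark~\ref{oper-filt}, and the explicit description of complements of members of $\Cof(\fH,\fG)$ given just before the statement. You correctly isolate the only nontrivial point — that the join a priori yields unions of columns $\{h_i\}\ts\ov A$ and rows $\ov B\ts\{g_j\}$ with a \emph{single} $A\in\fG$ and $B\in\fH$, while $\Cof(\fH,\fG)$ allows varying $A_i$, $B_j$ — and resolve it by replacing them with $A=\bigcap_i A_i\in\fG$ and $B=\bigcap_j B_j\in\fH$, using finite-intersection closure of the filters; the degenerate cases (empty unions, absorbed by $A=G$ or $B=H$, since $G\in\fG$ and $H\in\fH$ for any filter) are also handled properly.
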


\begin{fact}[see \cite{DuDu1}, Prop.~18, 20]\label{hg}
Suppose that $\fG$ contains the Frechet filter on $G$, and $\fH$
contains the Frechet filter on $H$. Then

1) $\Cof(H\ts G)\seq \Cof(\fH, \fG)\seq \fH\ots \fG$.

2) $\Cof(\fH, \fG)=(\Cof H\ts \Cof G)\cap (\fH\ots\fG)$.
\end{fact}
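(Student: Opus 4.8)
The plan is to prove both statements by directly unwinding the combinatorial description of $\Cof(\fH, \fG)$ given in its definition: a set $X\seq H\ts G$ lies in $\Cof(\fH, \fG)$ exactly when $\ov X$ is contained in a finite union of \emph{partial rows} $\{h\}\ts \ov A$ (with $A\in \fG$) and \emph{partial columns} $\ov B\ts \{g\}$ (with $B\in \fH$). Against this I would use the membership criterion for the tensor filter stated after Fact~\ref{ten}, namely $X\in \fH\ots\fG$ iff $\ov X\seq \ov B\ts \ov A$ for some $B\in \fH$ and $A\in \fG$, together with the observation that $\Cof H\ts \Cof G$ has base $\{B'\ts A'\mid B'\in\Cof H,\ A'\in\Cof G\}$, so $X\in \Cof H\ts\Cof G$ iff $\ov X\seq (\ov{B'}\ts G)\cup(H\ts\ov{A'})$ for finite $\ov{B'}\seq H$, $\ov{A'}\seq G$. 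The hypotheses $\Cof G\seq \fG$ and $\Cof H\seq \fH$ will be invoked precisely to adjoin cofinite factors to elements of $\fG$ and $\fH$ without leaving these filters.

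For part 1, to see $\Cof(H\ts G)\seq \Cof(\fH, \fG)$ I would take cofinite $C$, so $\ov C=\{(h_1,g_1),\dots,(h_n,g_n)\}$ is finite; putting $A_i=G\sm\{g_i\}\in \Cof G\seq \fG$ gives $\ov A_i=\{g_i\}$, so each singleton $\{(h_i,g_i)\}=\{h_i\}\ts \ov A_i$ is a partial row and $\ov C$ is a finite union of partial rows, whence $C\in \Cof(\fH, \fG)$. For $\Cof(\fH, \fG)\seq \fH\ots\fG$, given $X$ with
$$
\ov X\seq \lt(\bigcup_{i=1}^n \{h_i\}\ts \ov A_i\rt)\cup \lt(\bigcup_{j=1}^m \ov B_j\ts \{g_j\}\rt),\quad A_i\in \fG,\ B_j\in \fH,
$$
I would set $A=\lt(\bigcap_{i=1}^n A_i\rt)\cap (G\sm\{g_1,\dots,g_m\})$ and $B=\lt(\bigcap_{j=1}^m B_j\rt)\cap (H\sm\{h_1,\dots,h_n\})$. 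These lie in $\fG$ and $\fH$ since filters are closed under finite intersection and the adjoined cofinite factors belong to $\Cof G\seq \fG$ and $\Cof H\seq \fH$. One then reads off $h_i\in \ov B$, $\ov A_i\seq \ov A$, $\ov B_j\seq \ov B$ and $g_j\in \ov A$, so the displayed union is contained in $\ov B\ts \ov A$ and $X\in \fH\ots\fG$.

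For part 2 the inclusion $\seq$ is then nearly free: the second inclusion of part 1 gives $\Cof(\fH, \fG)\seq \fH\ots\fG$, while enlarging each partial row $\{h_i\}\ts \ov A_i$ to $\{h_i\}\ts G$ and each partial column $\ov B_j\ts\{g_j\}$ to $H\ts\{g_j\}$ shows $\ov X\seq (\{h_1,\dots,h_n\}\ts G)\cup(H\ts\{g_1,\dots,g_m\})$, i.e. $X\in \Cof H\ts \Cof G$. The substantive direction, which I expect to be the main obstacle, is $\sep$. Here I take $X$ in the intersection: membership in $\Cof H\ts\Cof G$ confines $\ov X$ to finitely many full rows $h_1,\dots,h_n$ and full columns $g_1,\dots,g_m$, while membership in $\fH\ots\fG$ gives $\ov X\seq \ov B\ts \ov A$ for some $B\in \fH$, $A\in \fG$. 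The key point is that this tensor bound controls each row and column \emph{separately}: $\ov X\cap(\{h_i\}\ts G)\seq \{h_i\}\ts \ov A$ and $\ov X\cap(H\ts\{g_j\})\seq \ov B\ts\{g_j\}$ are already partial rows and columns of the admissible shape. Since $\ov X$ is covered by these finitely many full rows and columns it equals their intersections with $\ov X$, exhibiting $\ov X$ as a finite union of admissible partial rows and columns, so $X\in \Cof(\fH, \fG)$ and the proof is complete.
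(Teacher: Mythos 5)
Your proposal is correct, and there is in fact nothing in the paper to compare it against: the statement is quoted as a Fact from \cite{DuDu1} (Prop.~18 and 20) and the paper supplies no proof, only the combinatorial description of $\Cof(\fH,\fG)$ that you take as your starting point. Your verification is complete as written. In part 1, the inclusion $\Cof(H\ts G)\seq\Cof(\fH,\fG)$ correctly writes each singleton $\{(h_i,g_i)\}$ as the partial row $\{h_i\}\ts\ov A_i$ with $A_i=G\sm\{g_i\}\in\Cof G\seq\fG$, and the inclusion $\Cof(\fH,\fG)\seq\fH\ots\fG$ works because your sets $A=\bigl(\bigcap_i A_i\bigr)\cap(G\sm\{g_1,\dots,g_m\})$ and $B=\bigl(\bigcap_j B_j\bigr)\cap(H\sm\{h_1,\dots,h_n\})$ absorb both the exceptional rows and columns, so that every partial row $\{h_i\}\ts\ov A_i$ and partial column $\ov B_j\ts\{g_j\}$ lands inside $\ov B\ts\ov A$; these are exactly the two places where the Frechet hypotheses $\Cof G\seq\fG$, $\Cof H\seq\fH$ are needed, as you note. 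In part 2, the nontrivial direction $\sep$ is handled correctly: from $X\in\Cof H\ts\Cof G$ you get $\ov X\seq(\{h_1,\dots,h_n\}\ts G)\cup(H\ts\{g_1,\dots,g_m\})$, and intersecting with the tensor bound $\ov X\seq\ov B\ts\ov A$ trims each full row to $\{h_i\}\ts\ov A$ and each full column to $\ov B\ts\{g_j\}$; since $A\in\fG$ and $B\in\fH$, the resulting covering $\ov X\seq\bigl(\bigcup_{i}\{h_i\}\ts\ov A\bigr)\cup\bigl(\bigcup_{j}\ov B\ts\{g_j\}\bigr)$ is already literally of the defining form for membership in $\Cof(\fH,\fG)$ (you do not even need the remark that subsets of partial rows are partial rows). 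One could alternatively deduce $\seq$ of part 2 from Fact~\ref{cof}, but your direct route is at least as clean.
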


Now we are in a position to introduce the main construction of this
section.

\begin{definition}\label{defn:dek}
Suppose that $\fH$ is a filter on $H$ and $\fG$ is a filter on $G$.
Let $\lan \fH, \fG\ran$ consists of all subsets $X$ of $H\times G$
with the following properties:

a) for every $A\in \fG^\pr$ there exists $B'\in \fH$ such that
$B'\ts \ov A\seq X$;

b) for every $B\in \fH^\pr$ there exists $A'\in \fG$ such that $\ov
B\ts A'\seq X$.
\end{definition}

If $t\in G$, then define $H_t(X)=\{h\in H\mid (h,t)\in X\}$. Similarly,
if $s\in H$, then set $G_s(X)=\{g\in G\mid (s,g)\in X\}$.

It is easily seen that $X\in \lan \fH, \fG\ran$ iff the following
holds:

a)$'$ \ $\bigcap_{t\in \ov A} H_t(X)\in \fH$ for every $A\in \fG^\pr$;

b)$'$ \ $\bigcap_{s\in \ov B} G_s(X)\in \fG$ for every $B\in \fH^\pr$.

For instance, the equivalence of a) and a)$'$ can be seen as
follows. If a) holds then $B'\seq \bigcap_{t\in \ov A} H_t$, hence
this intersection is in $\fH$; and if a)$'$ holds then we take $B'$
to be equal to this intersection.

\begin{fact}[see \cite{DuDu1}, Thm.~23]\label{main-fil}
$\lan \fH, \fG\ran$ is a filter on $H\ts G$ containing $\fH\ots
\fG$.
\end{fact}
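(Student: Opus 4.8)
The plan is to establish the statement of Fact~\ref{main-fil} in two parts: first that $\lan \fH, \fG\ran$ is a filter, and second that it contains $\fH\ots \fG$. For the filter property I must verify closure under supersets and under finite intersections, and that $\lan\fH,\fG\ran$ is nonempty. Closure under supersets is immediate from the definition: if $X\seq Y$ and $X$ satisfies a) and b), then any $B'\ts\ov A\seq X\seq Y$ witnesses a) for $Y$, and symmetrically for b). Nonemptiness follows by exhibiting $H\ts G$ itself as a member (taking $B'=H\in\fH$ and $A'=G\in\fG$, noting $\fH,\fG$ are filters so contain the whole set). The substantive work is closure under finite intersection, and for this I would use the reformulation a)$'$/b)$'$ in terms of the slices $H_t(X)$ and $G_s(X)$.

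\smallskip

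The key observation is that slicing commutes with intersection: $H_t(X\cap Y)=H_t(X)\cap H_t(Y)$ and likewise $G_s(X\cap Y)=G_s(X)\cap G_s(Y)$. Hence for a fixed $A\in\fG^\pr$,
\[
\bigcap_{t\in\ov A}H_t(X\cap Y)=\Bigl(\bigcap_{t\in\ov A}H_t(X)\Bigr)\cap\Bigl(\bigcap_{t\in\ov A}H_t(Y)\Bigr).
\]
If both $X$ and $Y$ satisfy a)$'$, the two factors on the right lie in $\fH$, so their intersection lies in $\fH$ because $\fH$ is closed under finite intersection; thus $X\cap Y$ satisfies a)$'$. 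The argument for b)$'$ is entirely symmetric, using that $\fG$ is closed under finite intersection. This shows $X\cap Y\in\lan\fH,\fG\ran$ and completes the proof that $\lan\fH,\fG\ran$ is a filter. I would rely on the equivalence of a) with a)$'$ (and b) with b)$'$) already spelled out in the excerpt, so that I may work with whichever formulation is more convenient.

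\smallskip

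For the inclusion $\fH\ots\fG\seq\lan\fH,\fG\ran$, I would use the characterization recorded just after Fact~\ref{ten}: a set $X\in\fH\ots\fG$ exactly when $\ov X\seq\ov B\ts\ov A$, i.e.\ $\ov B\ts\ov A\seq X$, for some $B\in\fH$, $A\in\fG$. Given such $X$, I must produce the witnesses demanded by a) and b). For a), fix any $A_0\in\fG^\pr$; I want a member of $\fH$ whose product with $\ov{A_0}$ sits inside $X$. The containment $\ov B\ts\ov A\seq X$ gives $B\ts\ov A\seq X$ after complementing in the first factor appropriately, but the indexing set $\ov A$ need not equal $\ov{A_0}$. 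The resolution is that $A_0\in\fG^\pr$ and $A\in\fG$ force $A_0\cup A$ to be cofinite, so $\ov{A_0}\cap\ov A=\ov{A_0\cup A}$ is finite; hence $\ov{A_0}$ differs from a subset of $\ov A$ by only finitely many points, and one absorbs this finite discrepancy by shrinking the witness $B'\in\fH$ (using that $\fH\sep\Cof(H)$ is unnecessary in general, but the finitely many bad columns can each be removed since $\fH$ is a filter containing cofinite modifications of its members). The symmetric bookkeeping handles b).

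\smallskip

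I expect the main obstacle to be precisely this last matching of index sets: reconciling the fixed ``test set'' $\ov{A_0}$ coming from $\fG^\pr$ with the ad~hoc set $\ov A$ supplied by membership in $\fH\ots\fG$. The slice reformulation a)$'$/b)$'$ is likely the cleaner route here too, since $\ov B\ts\ov A\seq X$ translates into $H_t(X)\sep\ov B$ for all $t\in\ov A$, whence $\bigcap_{t\in\ov{A_0}}H_t(X)\sep\ov B$ up to the finitely many indices in $\ov{A_0}\cap A$, each of which removes at most one slice and so keeps the intersection inside $\fH$. The filter and superset properties are routine once the slicing lemma is in hand; the genuine content lies in controlling these finite boundary discrepancies.
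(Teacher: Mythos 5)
Your first half is sound: closure under supersets and nonemptiness are as routine as you say, and closure under finite intersections via the slice identity $H_t(X\cap Y)=H_t(X)\cap H_t(Y)$ together with a)$'$/b)$'$ is a correct and clean argument. The inclusion $\fH\ots\fG\seq\lan\fH,\fG\ran$, however, rests on a misreading of membership in $\fH\ots\fG$. You convert $\ov X\seq \ov B\ts\ov A$ into ``$\ov B\ts\ov A\seq X$'', but taking complements correctly gives $\ov{\ov B\ts\ov A}\seq X$, and $\ov{\ov B\ts\ov A}=(B\ts G)\cup(H\ts A)$: the complement of a rectangle is not the opposite rectangle. Your reading is genuinely weaker and the conclusion fails under it: $X=\ov B\ts\ov A$ satisfies ``$\ov B\ts\ov A\seq X$'', yet for a proper filter $\fH$ condition a) fails for this $X$, since $B'\ts\ov{A_0}\seq \ov B\ts\ov A$ forces $B'\seq\ov B$ and then $B'\cap B=\pty\in\fH$. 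So no amount of bookkeeping can rescue an argument based on that containment.

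The downstream patching inherits the error. From your reading you only learn $H_t(X)\sep\ov B$ for $t\in\ov A$, which is useless because $\ov B\notin\fH$ when $\fH$ is proper, so even the ``good'' slices do not place the intersection in $\fH$; and your fix --- discard the finitely many bad indices, shrinking the witness because ``$\fH$ contains cofinite modifications of its members'' --- silently assumes $\fH\sep\Cof H$, a hypothesis Fact~\ref{main-fil} does not make (it enters only in later statements such as Fact~\ref{hg}). There is also an internal slip: you correctly note that $\ov{A_0}\cap\ov A$ is finite, but the last paragraph speaks of ``finitely many indices in $\ov{A_0}\cap A$'', a set that may well be infinite. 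With the correct unpacking none of this machinery is needed: from $(B\ts G)\cup(H\ts A)\seq X$ the single witness $B'=B$ gives $B\ts\ov{A_0}\seq B\ts G\seq X$ for \emph{every} $A_0\in\fG^\pr$, and symmetrically $A'=A$ gives $\ov{B_0}\ts A\seq H\ts A\seq X$ for every $B_0\in\fH^\pr$; the inclusion is immediate, with no finite-discrepancy analysis at all. (The paper quotes this fact from \cite{DuDu1} without reproducing a proof, so only correctness is at issue here, and the matching-of-index-sets problem you anticipated is an artifact of the misread containment rather than a genuine feature of the statement.)
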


\begin{fact}[see \cite{DuDu1}, L.~24]\label{inter}
If $\fH$ and $\fG$ contain Frechet filters, then $(\fH\ots \fG)\cap
(\fH^\pr \ots \fG^\pr)=\Cof(H\ts G)$.
\end{fact}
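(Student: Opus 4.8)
The plan is to prove the two inclusions separately; the containment $\Cof(H\ts G)\seq (\fH\ots \fG)\cap (\fH^\pr\ots \fG^\pr)$ is the routine one, and the reverse is where the content lies. For the easy direction I would first record that $\ots$ is monotone in both arguments: if $\fH_1\seq \fH_2$ and $\fG_1\seq \fG_2$, then every basic set $\ov{\ov B\ts \ov A}$ of $\fH_1\ots \fG_1$ is already a basic set of $\fH_2\ots \fG_2$, so $\fH_1\ots \fG_1\seq \fH_2\ots \fG_2$. Since $\fH$ and $\fG$ contain the Frechet filters, and since Remark~\ref{quot} 4) gives $\Cof(H)\seq \fH^\pr$ and $\Cof(G)\seq \fG^\pr$, monotonicity combined with Fact~\ref{coff} yields $\Cof(H\ts G)=\Cof(H)\ots \Cof(G)\seq \fH\ots \fG$ and, in the same way, $\Cof(H\ts G)\seq \fH^\pr\ots \fG^\pr$. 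Hence $\Cof(H\ts G)$ lies in the intersection.

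For the reverse inclusion I would take $X$ belonging to both filters and use the characterization of membership in $\ots$ recorded right after Fact~\ref{ten}. The hypothesis $X\in \fH\ots \fG$ means $\ov X\seq \ov B\ts \ov A$ for some $B\in \fH$, $A\in \fG$, while $X\in \fH^\pr\ots \fG^\pr$ means $\ov X\seq \ov{B'}\ts \ov{A'}$ for some $B'\in \fH^\pr$, $A'\in \fG^\pr$. Intersecting these two product boxes gives $\ov X\seq (\ov B\cap \ov{B'})\ts (\ov A\cap \ov{A'})$, which reduces the problem to showing that each factor is finite.

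The crux is precisely this finiteness, and it is forced by the definition of $\pr$. Indeed, $B'\in \fH^\pr$ means $B\cup B'$ is cofinite in $H$ (apply the defining condition to the element $B\in \fH$), so $\ov B\cap \ov{B'}=\ov{B\cup B'}$ is finite; symmetrically, $A'\in \fG^\pr$ makes $\ov A\cap \ov{A'}=\ov{A\cup A'}$ finite. A product of two finite sets is finite, so $\ov X$ is finite and $X\in \Cof(H\ts G)$, completing the reverse inclusion.

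I do not expect a genuine obstacle: once membership in $\ots$ is unwound into a containment of the complement inside a rectangle, the argument is dictated by the definition of $\pr$, which is engineered exactly so that an $\fH$-set and an $\fH^\pr$-set have cofinite union. The only points demanding care are bookkeeping ones, namely keeping straight which of $\ov{(\,\cdot\,)}$ and $(\,\cdot\,)^\pr$ is in play, and invoking the Frechet hypothesis precisely where it is needed, i.e.\ to place $\Cof$ below all four of $\fH$, $\fG$, $\fH^\pr$, $\fG^\pr$ in the easy direction.
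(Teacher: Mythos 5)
Your proof is correct. Note that the paper itself gives no proof of this statement---it is imported as a Fact from \cite{DuDu1}, Lemma~24---so there is nothing internal to compare against; your argument is the natural one and uses exactly the machinery the paper assembles, namely the characterization of membership in $\fH\ots\fG$ recorded after Fact~\ref{ten}, Fact~\ref{coff} together with monotonicity (Remark~\ref{incl}) and $\Cof(G)\seq\fG^{\pr}$ for the easy inclusion, and the definition of $\pr$ to force the finiteness of $\ov B\cap\ov{B'}$ and $\ov A\cap\ov{A'}$ in the hard one. Your closing observation is also accurate: the Frechet hypothesis enters only in placing $\Cof(H\ts G)$ below $\fH\ots\fG$, since $\fH^{\pr}$ and $\fG^{\pr}$ contain the Frechet filters automatically.
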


The following remark is obvious.

\begin{remark}\label{incl}
If $\fH\seq \fH'$ and $\fG\seq \fG'$, then $\fH\ots \fG\seq \fH'\ots
\fG'$.
\end{remark}

\begin{prop}\label{ttt}
If $\fH$ and $\fG$ are balanced filters, then $\lan \fH, \fG\ran\sep
\Cof(\fH,\fG):(\fH^\pr\ots\fG^\pr)=(\fH^\pr\ots\fG^\pr)^\pr$.
\end{prop}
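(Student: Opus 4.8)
The plan is to treat the displayed chain as two separate assertions: the inclusion $\lan\fH,\fG\ran\sep\Cof(\fH,\fG):(\fH^\pr\ots\fG^\pr)$ and the identity $\Cof(\fH,\fG):(\fH^\pr\ots\fG^\pr)=(\fH^\pr\ots\fG^\pr)^\pr$. Throughout I would exploit that $\fH,\fG$ balanced forces $\fH^\pr,\fG^\pr$ to be balanced as well, since $\fH^{\pr\pr\pr}=\fH^\pr$ by Lemma~\ref{two-perp}; hence all four filters contain the respective Frechet filters, so the Facts of Section~\ref{S-filter} that assume this are applicable.

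For the identity I would first rewrite the right-hand side, by the definition of the $\pr$-operation on $H\ts G$, as $\Cof(H\ts G):(\fH^\pr\ots\fG^\pr)$; it then suffices to show that the quotients of $\Cof(\fH,\fG)$ and of $\Cof(H\ts G)$ by $\fH^\pr\ots\fG^\pr$ agree. By Remark~\ref{quot} 3), applied with divisor $\fH^\pr\ots\fG^\pr$, each of the two quotients equals the quotient of its intersection with $\fH^\pr\ots\fG^\pr$, so I only need $\Cof(\fH,\fG)\cap(\fH^\pr\ots\fG^\pr)=\Cof(H\ts G)\cap(\fH^\pr\ots\fG^\pr)$, and I claim both sides equal $\Cof(H\ts G)$. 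For the left side, Fact~\ref{hg} 2) lets me replace $\Cof(\fH,\fG)$ by $(\Cof H\ts\Cof G)\cap(\fH\ots\fG)$, whereupon Fact~\ref{inter} collapses $(\fH\ots\fG)\cap(\fH^\pr\ots\fG^\pr)$ to $\Cof(H\ts G)$, which is contained in $\Cof H\ts\Cof G$ by Facts~\ref{coff} and~\ref{ten}. For the right side, $\fH^\pr\ots\fG^\pr\sep\Cof H\ots\Cof G=\Cof(H\ts G)$ by Remark~\ref{incl} and Fact~\ref{coff}, so intersecting with $\Cof(H\ts G)$ changes nothing.

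For the inclusion I would argue against the equivalent target $(\fH^\pr\ots\fG^\pr)^\pr$ just identified. Unwinding the $\pr$ and using that the sets $\ov{\ov B\ts\ov A}$ with $B\in\fH^\pr$, $A\in\fG^\pr$ form a base of $\fH^\pr\ots\fG^\pr$, membership $X\in(\fH^\pr\ots\fG^\pr)^\pr$ amounts to: $\ov X\cap(\ov B\ts\ov A)$ is finite for all $B\in\fH^\pr$ and $A\in\fG^\pr$. I would then verify conditions a)$'$ and b)$'$ of Definition~\ref{defn:dek}. Fixing $A\in\fG^\pr$ and setting $W=\bigcap_{t\in\ov A}H_t(X)$, the key observation is the identity $\ov W\cap\ov B=\bigcup_{t\in\ov A}(\ov{H_t(X)}\cap\ov B)$, which is precisely the projection onto $H$ of the set $\ov X\cap(\ov B\ts\ov A)$; by hypothesis the latter is finite, hence so is its projection. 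Thus $W\cup B$ is cofinite for every $B\in\fH^\pr$, i.e.\ $W\in\fH^{\pr\pr}=\fH$, which is a)$'$. Condition b)$'$ follows by the symmetric argument, and the equivalence recorded after Definition~\ref{defn:dek} gives $X\in\lan\fH,\fG\ran$.

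The main obstacle is the inclusion, and within it the passage from the two-dimensional finiteness of $\ov X\cap(\ov B\ts\ov A)$ to the one-dimensional conclusion $W\in\fH$. Two things must click: the projection identity above, which reduces a statement about rectangular slices to one about the column-intersection $W$, and the use of balancedness, since finiteness of $\ov W\cap\ov B$ for all $B\in\fH^\pr$ only yields $W\in\fH^{\pr\pr}$, and it is exactly the hypothesis $\fH^{\pr\pr}=\fH$ that upgrades this to $W\in\fH$. By contrast the identity half is a mechanical splice of the cited Facts, so I expect no difficulty there beyond confirming that the Frechet-filter hypotheses are met.
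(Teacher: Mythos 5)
Your proposal is correct and takes essentially the same route as the paper: the equality half is the identical splice of Fact~\ref{hg}~2), Fact~\ref{inter}, Remark~\ref{quot}~3) and the definition of $\pr$ (your extra check that $\Cof(H\ts G)\seq \fH^\pr\ots\fG^\pr$ is harmless), and the inclusion half rests on exactly the paper's key mechanism, namely that the complement of the slice-intersection meets $\ov B$ (resp.\ $\ov A$) in the projection of the finite set $\ov X\cap(\ov B\ts \ov A)$, upgraded to filter membership by balancedness $\fH^{\pr\pr}=\fH$, $\fG^{\pr\pr}=\fG$. The only difference is presentational: you verify conditions a)$'$/b)$'$ directly via the projection identity, while the paper fixes $B\in\fH^\pr$, defines $A'$ by projecting $\ov Z\cap(\ov B\ts G)$, and establishes $A'\in\fG$ by contradiction --- the contrapositive of your computation.
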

\begin{proof}
First we prove the equality. We claim that $\Cof(\fH, \fG)\cap
(\fH^\pr\ots\fG^\pr)= \Cof(H\ts G)$. Indeed, by Fact~\ref{hg} 2),
we obtain
$$\Cof(\fH, \fG)\cap (\fH^\pr\ots\fG^\pr)=(\Cof H\ts \Cof G)\cap
(\fH\ots \fG)\cap (\fH^\pr\ots\fG^\pr)\,.$$
By Fact~\ref{inter} this is the same as $(\Cof H \ts \Cof G)\cap \Cof(H\ts G)=\Cof(H\ts G)$, as
desired.

Then by Remark~\ref{quot} 3) and the definition of $\pr$ we obtain
$\Cof(\fH, \fG):(\fH^\pr\ots \fG^\pr)=(\Cof(\fH,\fG)\cap
(\fH^\pr\ots \fG^\pr)): (\fH^\pr\ots \fG^\pr)=\Cof(H\ts G):
(\fH^\pr\ots \fG^\pr)=(\fH^\pr\ots \fG^\pr)^\pr$.

Now we prove the inclusion. Suppose that $Z\in (\fH^\pr\ots
\fG^\pr)^\pr$ and we have to show that $Z\in \lan \fH, \fG\ran$.
Take any $B\in \fH^\pr$ and define $A'\seq G$ by the following rule:
$\ov A'=\{g\in G\mid (h,g)\in \ov Z$ for some $h\in \ov
B\}=\pi_G[(\ov B\ts G)\cap \ov Z]$, where $\pi_G$ is a projection on $G$.

Suppose that $A'\notin \fG$. Then $A'\notin (\fG^\pr)^\pr$ because
$\fG$ is balanced. It follows that there exists $A\in \fG^\pr$ such
that the union $A'\cup A$ is not cofinite in $G$, that is, the
intersection $\ov A'\cap \ov A$ is infinite. Since $\ov{\ov B\ts \ov
A}\in \fH^\pr \ots \fG^\pr$, it follows that $Z\cup \ov{\ov B\ts \ov
A}$ is cofinite in $H\ts G$, hence $\ov Z\cap (\ov B\ts \ov A)$ is a
finite set. On the other hand for every $g\in \ov A'\cap \ov A$
(there are infinitely many of them) there exists $h\in \ov B$ such
that $(h,g)\in \ov Z\cap (\ov B\ts \ov A)$, hence this set must be
infinite, a contradiction. Thus $A'\in \fG$.

We prove that $\ov B\ts A'\seq Z$. Indeed, otherwise $(h,g)\in \ov
Z$ for some $(h,g)\in \ov B\ts A'$. Since $g\notin \ov A'$, by the
construction of $\ov A'$ we obtain $h\notin \ov B$, a contradiction.

Thus for every $B\in \fH^\pr$ there exists $A'\in \fG$ such that
$\ov B\ts A'\seq Z$. Similarly for every $A\in \fG^\pr$ there exists
$B'\in \fH$ such that $B'\ts \ov A\seq Z$. It follows that $Z\in
\lan \fH, \fG\ran$.
\end{proof}

Before proving the next lemma, let us recall a useful equality: if
$B\seq H$ and $A\seq G$, then $\ov{(\ov B\ts \ov A)}= (B\ts A)\cup
(B\ts \ov A)\cup (\ov B\ts A)$.

\begin{lemma}\label{ss}
$\fH^\pr \ots \fG^\pr\seq \lan \fH, \fG\ran^\pr$ for all filters
$\fH$ and $\fG$.
\end{lemma}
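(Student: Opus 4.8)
The plan is to unwind both occurrences of $\pr$ and the definition of $\fH^\pr\ots\fG^\pr$ so that the statement becomes a concrete finiteness claim. By Fact~\ref{ten} (applied to $\fH^\pr$ and $\fG^\pr$), a set $Y\seq H\ts G$ lies in $\fH^\pr\ots\fG^\pr$ exactly when $\ov Y\seq \ov B\ts\ov A$ for some $B\in\fH^\pr$ and $A\in\fG^\pr$. On the other hand, $Y\in\lan\fH,\fG\ran^\pr$ means, by the definition of $\pr$, that $Y\cup X$ is cofinite in $H\ts G$ for every $X\in\lan\fH,\fG\ran$; equivalently $\ov Y\cap\ov X$ is finite. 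So the whole lemma reduces to showing: if $\ov Y\seq\ov B\ts\ov A$ with $B\in\fH^\pr$, $A\in\fG^\pr$, then $(\ov B\ts\ov A)\cap\ov X$ is finite for every $X\in\lan\fH,\fG\ran$.

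First I would fix such a $Y$, together with the witnessing $B\in\fH^\pr$ and $A\in\fG^\pr$, and then fix an arbitrary $X\in\lan\fH,\fG\ran$. The point is that the two witnesses $A$ and $B$ are precisely the inputs demanded by Definition~\ref{defn:dek}: applying property a) of $\lan\fH,\fG\ran$ to this $A\in\fG^\pr$ yields a $B'\in\fH$ with $B'\ts\ov A\seq X$, and applying property b) to this $B\in\fH^\pr$ yields an $A'\in\fG$ with $\ov B\ts A'\seq X$. Passing to complements gives $\ov X\seq\ov{B'\ts\ov A}=(\ov{B'}\ts G)\cup(H\ts A)$ and $\ov X\seq\ov{\ov B\ts A'}=(B\ts G)\cup(H\ts\ov{A'})$.

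Now I intersect each of these inclusions with $\ov B\ts\ov A$ and simplify using $\ov A\cap A=\pty$, $\ov B\cap B=\pty$. The first inclusion collapses to $(\ov B\ts\ov A)\cap\ov X\seq(\ov B\cap\ov{B'})\ts\ov A$, and the second to $(\ov B\ts\ov A)\cap\ov X\seq\ov B\ts(\ov A\cap\ov{A'})$. Taking the intersection of the two right-hand sides gives $(\ov B\ts\ov A)\cap\ov X\seq(\ov B\cap\ov{B'})\ts(\ov A\cap\ov{A'})$. Since $B\in\fH^\pr$ and $B'\in\fH$, the set $B\cup B'$ is cofinite, so $\ov B\cap\ov{B'}=\ov{B\cup B'}$ is finite; similarly $A\in\fG^\pr$, $A'\in\fG$ force $\ov A\cap\ov{A'}$ finite. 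Hence $(\ov B\ts\ov A)\cap\ov X$ is contained in a product of two finite sets and is finite, which gives $\ov Y\cap\ov X$ finite and completes the argument.

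The one genuinely substantive point — and the main obstacle to anticipate — is that \emph{neither} property a) \emph{nor} property b) suffices on its own: property a) alone bounds $(\ov B\ts\ov A)\cap\ov X$ by $(\ov B\cap\ov{B'})\ts\ov A$ (finite factor times a possibly infinite $\ov A$), and property b) alone by $\ov B\ts(\ov A\cap\ov{A'})$ (possibly infinite times finite), each of which can be infinite. Only after \emph{intersecting} the two bounds does one obtain a finite-by-finite product. Everything else is routine complement bookkeeping, for which the equality $\ov{(\ov B\ts\ov A)}=(B\ts A)\cup(B\ts\ov A)\cup(\ov B\ts A)$ recalled just before the statement is the convenient tool; note also that no balancedness or Frechet hypothesis on $\fH,\fG$ is needed here, matching the phrase ``for all filters $\fH$ and $\fG$'' in the statement.
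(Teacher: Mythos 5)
Your proof is correct and follows essentially the same route as the paper's: both arguments feed the witnesses $A\in\fG^\pr$, $B\in\fH^\pr$ from the $\ots$-membership into conditions a) and b) of Definition~\ref{defn:dek}, and both arrive at the same finite bound $\ov Y\cap\ov X\seq(\ov B\cap\ov{B'})\ts(\ov A\cap\ov{A'})$. The only difference is cosmetic bookkeeping: you write each complement as a union of two strips and intersect the two resulting bounds, whereas the paper expands $\ov{B'\ts\ov A}\cap\ov{\ov B\ts A'}$ at once using the three-term identity recalled just before the lemma.
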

\begin{proof}
Suppose that $Z\in \fH^\pr \ots \fG^\pr$, hence $\ov Z\seq \ov B\ts
\ov A$ for some $B\in \fH^\pr$, $A\in \fG^\pr$. We have to prove
that $Z\in \lan \fH, \fG\ran^\pr$, that is, $Z\cup X$ is a cofinite
set for every $X\in \lan \fH, \fG\ran$. Since $X\in \lan \fH,
\fG\ran$ there are $B'\in \fH$ and $A'\in \fG$ such that $B'\times
\ov A, \ov B\ts A'\seq X$. Then
$$
\ov X\seq \ov{(B'\ts \ov A) \cup
(\ov B\ts A')}=\ov{B'\ts \ov A} \cap \ov{\ov B\ts A'}=
$$
$$
((\ov B'\ts A)\cup (\ov B'\ts \ov A)\cup (B'\ts A))\cap ((B\ts \ov A')\cup (B\ts
A')\cup (\ov B\ts \ov A'))\,.
$$
 Intersecting this with $\ov Z\seq \ov
B\ts \ov A$ and taking into account that the intersections of $\ov
B\ts \ov A$ with $\ov B'\ts A$, $B'\ts A$, $B\ts \ov A'$ and $B\ts
A'$ are empty, we obtain $\ov X\cap \ov Z\seq (\ov B'\cap \ov B)\ts
(\ov A\cap \ov A')$. Since $B\in \fH^\pr$ and $B'\in \fH$, therefore
$B\cup B'$ is a cofinite set, hence $\ov B'\cap \ov B$ is finite.
Similarly $\ov A'\cap \ov A$ is a finite set, hence $\ov X\cap \ov
Z$ is finite, as desired.
\end{proof}

\begin{remark}\label{ua}
$\fH\ots\fG \seq \lan \fH^\pr, \fG^\pr \ran^\pr$ for all filter
$\fH$ and $\fG.$
\end{remark}
\begin{proof}
Indeed, by Lemma~\ref{two-perp} we have $\fG\seq \fG^{\pr\pr}$ and
$\fH\seq \fH^{\pr\pr}$, therefore $\fH\ots \fG\seq \fH^{\pr\pr}\ots
\fG^{\pr\pr}$ by Remark~\ref{incl}. Furthermore
$\fH^{\pr\pr}\ots\fG^{\pr\pr} \seq \lan \fH^\pr, \fG^\pr \ran^\pr$
by Lemma~\ref{ss}.
\end{proof}

\begin{theorem}\label{ht}
Suppose that $\fH$ and $\fG$ are balanced filters. Then $\lan \fH,
\fG\ran =(\fH^\pr\ots \fG^\pr)^\pr$. In
particular, $\lan \fH, \fG\ran$ is a balanced filter.
\end{theorem}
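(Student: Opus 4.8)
The plan is to prove the equality by sandwiching $\lan \fH, \fG\ran$ between two copies of $(\fH^\pr\ots \fG^\pr)^\pr$, since essentially all of the substantive work has already been done in Proposition~\ref{ttt} and Lemma~\ref{ss}; what remains is a formal manipulation of the $\pr$-operation. For the inclusion $\lan \fH, \fG\ran \sep (\fH^\pr\ots \fG^\pr)^\pr$ there is nothing new to do, as this is precisely the inclusion established in Proposition~\ref{ttt} under the hypothesis that $\fH$ and $\fG$ are balanced.

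For the reverse inclusion I would start from Lemma~\ref{ss}, which gives $\fH^\pr \ots \fG^\pr \seq \lan \fH, \fG\ran^\pr$ for arbitrary filters. The key observation is that the $\pr$-operation reverses inclusions: since $\fG^\pr = \Cof(G):\fG$, this is exactly Remark~\ref{quot}~2) applied with $\fF = \Cof(G)$. Applying $\pr$ to the inclusion of Lemma~\ref{ss} therefore yields $\lan \fH, \fG\ran^{\pr\pr} \seq (\fH^\pr\ots \fG^\pr)^\pr$. Combining this with $\lan \fH, \fG\ran \seq \lan \fH, \fG\ran^{\pr\pr}$ from Lemma~\ref{two-perp} gives $\lan \fH, \fG\ran \seq (\fH^\pr\ots \fG^\pr)^\pr$, which together with the previous paragraph proves the claimed equality.

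Finally, for the \emph{in particular} clause I would observe that the equality just proved exhibits $\lan \fH, \fG\ran$ as $\cF^\pr$ with $\cF = \fH^\pr\ots \fG^\pr$, and every filter of the form $\cF^\pr$ is balanced: by Lemma~\ref{two-perp} we have $\cF^\pr = \cF^{\pr\pr\pr} = (\cF^\pr)^{\pr\pr}$, which is exactly the condition defining a balanced filter.

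There is no genuine obstacle once Proposition~\ref{ttt} and Lemma~\ref{ss} are available; the only point deserving care is tracking where balancedness is used. Lemma~\ref{ss} holds for all filters, so the $\seq$ inclusion requires no hypothesis, whereas the $\sep$ inclusion runs through Proposition~\ref{ttt} and genuinely needs $\fH$ and $\fG$ balanced (this is where one invokes, via that proposition, that $A'\notin\fG$ forces $A'\notin(\fG^\pr)^\pr$). Thus balancedness is precisely what upgrades the two one-sided inclusions into the stated equality.
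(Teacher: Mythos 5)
Your proof is correct and takes essentially the same route as the paper: one inclusion is Proposition~\ref{ttt}, and the other comes from applying $\pr$ to Lemma~\ref{ss} and combining with $\lan \fH, \fG\ran \seq \lan \fH, \fG\ran^{\pr\pr}$ from Lemma~\ref{two-perp}. The only cosmetic difference is that the paper reads the balancedness off the resulting sandwich $\lan \fH, \fG\ran \sep (\fH^\pr\ots \fG^\pr)^\pr \sep \lan \fH, \fG\ran^{\pr\pr}$ all at once, whereas you deduce it separately from the general fact that any filter of the form $\cF^\pr$ is balanced.
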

\begin{proof}
By Proposition~\ref{ttt} we have $\lan \fH, \fG\ran \sep \lan
\fH^\pr\ots \fG^\pr\ran^\pr$. On the other hand applying $\pr$ to
the inclusion in Lemma~\ref{ss}, we obtain $(\fH^\pr\ots
\fG^\pr)^\pr\sep \lan \fH, \fG\ran^{\pr\pr}$. Thus $\lan \fH,
\fG\ran \sep \lan \fH^\pr, \fG^\pr\ran^\pr\sep \lan \fH,
\fG\ran^{\pr\pr}$, hence $\lan \fH, \fG\ran\sep \lan \fH,
\fG\ran^{\pr\pr}$. Then Lemma~\ref{two-perp} yields the desired.
\end{proof}

\section{$\fG$-sums}\label{S-sums}

Suppose that $(X,\Ta)$ is a typological abelian group and $x_i$,
$i\in I$ is a family of elements of $X$. An element $x\in X$ is said to be
a \emph{sum} of this family with respect to $\Ta$, written
$x=\sum_{i\in I}^\Ta x_i$, if the following holds. For every
neighbourhood $U$ of $x$ there is a finite subset $\Del\seq I$ such
that $\sum_{i\in\Delta'}x_i\in U$ for every finite set $\Del'\seq I$
containing $\Del$. Clearly, if $X$ is Hausdorff, then the sum is
unique. It is easily seen that, if the family $x_i$, $i\in I$ is
summable, then the limit of the $x_i$ with respect to the Frechet
filter on $I$ is equal to zero. This means that for every zero
neighbourhood $U$  there exists a finite subset $\Del$ of $I$ such
that $x_i\in U$ for every $i\in I\sm \Del$.

Suppose that $(X,\Ta_X)$ and $(Y,\Ta_Y)$ are topological abelian
groups. If $\phi$ is a continuous morphism from $X$ to $Y$ then
$\phi$ preserves topological sums. This means that, if $x=\sum_{i\in
I}^{\Ta_X} x_i$ in $X$, then the sum $\sum_{i\in I}^{\Ta_Y}
\phi(x_i)$ exists and equal to $\phi(x)$.

For more on sums in topological abelian groups see \cite{BuTop}

Now we define a sum with respect to a filter.

\begin{definition}\label{deff}
Let $h_j\in \Map(G,K)$, $j\in J$ be a family of maps and let $\fG$
be a filter. This family is said to be \emph{$\fG$-summable}, and
the map $f: G\to K$ is a \emph{$\fG$-sum} of this family,
$f=\sum_{j\in J}^{\fG} h_j$, if the following holds:

1) for every $g\in G$ there are only finitely many $j\in J$ such
that $g\in \supp(h_j)$, and $f(g)=\sum_{j\in J} h_j(g)$;

2) $\bigcap_{j\in J} \cZ(h_j)\in \fG$.
\end{definition}

Note that the condition 1) of this definition means that the family
$\{h_j\}$, $j\in J$ is summable with respect to Tychonoff topology
on $\Map(G,K)$. Furthermore, 2) implies that $h_j\in \FU(\fG)$ for
every $j$; and $\cZ(f)\sep \bigcap_{j\in J} \cZ(h_j)$ yields that
$f\in \FU(\fG)$.

In the following theorem we compare these two types of summability.

\begin{theorem}\label{sum}
1) If the family $h_j$, $j\in J$ is $\fG$-summable, then it is
summable with respect to topology $T(\fG)$. Furthermore, $\sum_{j\in
J}^{\fG} h_j= \sum_{j\in J}^{T(\fG)} h_j$.

2) If the family $h_j\in \FU(\fG)$, $j\in J$ is summable with
respect to $T(\fG)$ and $\fG$ is balanced, then it is $\fG$-summable
and $\sum_{j\in J}^{\fG} h_j= \sum_{j\in J}^{T(\fG)} h_j$ again.
\end{theorem}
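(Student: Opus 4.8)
The plan is to prove the two parts in order, using part~1) inside part~2) and exploiting balancedness only through the identity $\fG=\fG^{\pr\pr}$ of Lemma~\ref{two-perp}. For part~1), assume $f=\sum_{j\in J}^{\fG}h_j$ and set $B=\bigcap_{j\in J}\cZ(h_j)$, so that $B\in\fG$ by condition~2) of Definition~\ref{deff}, $\supp(h_j)\seq\ov B$ for every $j$, and (using condition~1) $\supp(f)\seq\ov B$ as well. A basic zero-neighbourhood of $T(\fG)$ is $U(A,\fG)$ with $A\in\fG^\pr$, and I must produce a finite $\Del\seq J$ with $\sum_{j\in\Del'}h_j-f\in U(A,\fG)$ for every finite $\Del'\sep\Del$, i.e. with this difference supported in $A$. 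The key observation is that $A\in\fG^\pr$ and $B\in\fG$ force $A\cup B$ to be cofinite, so $\ov A\cap\ov B$ is finite; on $B$ the difference vanishes identically, hence only the finitely many points of $\ov A\cap\ov B$ matter. At each such point condition~1) supplies finitely many indices $j$ with $h_j(g)\neq0$, and taking $\Del$ to be their (finite) union guarantees $\sum_{j\in\Del'}h_j(g)=f(g)$ there for every $\Del'\sep\Del$. Thus the difference is supported in $A$, which gives $f=\sum_{j\in J}^{T(\fG)}h_j$.

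For part~2) I must recover conditions~1) and~2) of Definition~\ref{deff} from the $T(\fG)$-summability of $f=\sum_{j}^{T(\fG)}h_j$. Condition~1) is local: for fixed $g$ the set $\ov{\{g\}}$ is cofinite, hence lies in $\fG^\pr$ (recall $\Cof(G)\seq\fG^\pr$), so $U(\ov{\{g\}},\fG)=\{\phi\in\FU(\fG)\mid\phi(g)=0\}$ is a genuine zero-neighbourhood; applying summability to it yields a finite $\Del_g$ with $\sum_{j\in\Del'}h_j(g)=f(g)$ for all finite $\Del'\sep\Del_g$, and comparing $\Del_g$ with $\Del_g\cup\{j_0\}$ shows $h_{j_0}(g)=0$ for $j_0\notin\Del_g$. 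This is exactly condition~1), and it exhibits the prospective $\fG$-sum pointwise as $g\mapsto\sum_j h_j(g)$, which is $f$.

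The real content is condition~2), that $B:=\bigcap_j\cZ(h_j)\in\fG$, and here balancedness is indispensable. Since $\fG=\fG^{\pr\pr}$, it suffices to show $B\cup A'$ is cofinite for every $A'\in\fG^\pr$. Fix such an $A'$. A summable family tends to $0$ along the Frechet filter on $J$ (the remark preceding Definition~\ref{deff}), so there is a finite $\Del_{A'}$ with $\supp(h_j)\seq A'$ for all $j\notin\Del_{A'}$; consequently $\ov B\cap\ov{A'}=\bigcup_{j\in\Del_{A'}}\bigl(\supp(h_j)\cap\ov{A'}\bigr)$. Each term is finite, because $\cZ(h_j)\in\fG$ and $A'\in\fG^\pr$ make $\cZ(h_j)\cup A'$ cofinite, i.e. $\supp(h_j)\cap\ov{A'}$ finite; a finite union of finite sets is finite, so $B\cup A'$ is cofinite and $B\in\fG^{\pr\pr}=\fG$. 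With conditions~1) and~2) verified the family is $\fG$-summable, and since condition~1) identifies its $\fG$-sum with $f$, we obtain $\sum_j^{\fG}h_j=f=\sum_j^{T(\fG)}h_j$. I expect the sole delicate step to be this verification of condition~2): without balancedness one cannot test membership in $\fG$ against $\fG^\pr$, and the reduction to the finite index set $\Del_{A'}$ via the ``tends to $0$'' property would lose its force. Everything else collapses, once supports are confined to $\ov B$ (part~1) or the vanishing set $A'$ is brought in (part~2), to pointwise statements over finite index sets.
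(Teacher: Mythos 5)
Your proof is correct and takes essentially the same route as the paper's: in part 1) your finite exceptional set $\ov A\cap \ov B$ is precisely the paper's $\{g_1,\dots,g_n\}=\ov{A\cup A'}$ (with the roles of $A$ and $A'$ swapped in notation), and in part 2) you use the same ingredients — convergence of the $h_j$ to $0$ along the Frechet filter on $J$, splitting $J$ into a finite exceptional set and its complement for each test set $A'\in\fG^\pr$, and the appeal to $\fG^{\pr\pr}=\fG$. Your explicit verification of condition 1) of Definition~\ref{deff} via the neighbourhoods $U(\ov{\{g\}},\fG)$ and comparison of $\Del_g$ with $\Del_g\cup\{j_0\}$ is just a slightly more detailed rendering of the step the paper covers by citing the necessary condition of summability.
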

\begin{proof}
1) Suppose that $f=\sum_{j\in J}^{\fG} h_j$ and $A=\bigcap_{j\in J}
\cZ(h_j)\in \fG$. Choose any $A'\in \fG^\pr$. Then $A\cup A'$ is a
cofinite set, hence $A\cup A' = G\sm \{g_1, \dots, g_n\}$, $g_i\in
G$. Let $J_0$ consist of all $j\in J$ such that $g_t\in \supp(h_j)$
for some $t=1, \dots, n$. By the assumption $J_0$ is a finite subset
of $J$.

Suppose that $J'$ is any finite subset of $J$ containing $J_0$. Then
$\cZ(f-\sum_{j\in J'} h_j)\sep A\cup \{g_1, \dots, g_n\}\sep \ov
A'$, therefore $f-\sum_{j\in J'} h_j\in U(A', \fG)$. This proves
that $f=\sum_{j\in J}^{T(\fG)} h_j$.

2) Because the family $h_j$, $j\in J$ is summable with respect to
$T(\fG)$, by what we have already noticed, $h_j$ converges to $0$
with respect to the Frechet filter on $I$. It follows that for every
$g\in G$ there exist only finitely many $j\in J$ such that
$h_j(g)\neq 0$, therefore 1) holds true.

Thus it remains to prove that $A=\bigcap_{j\in J} \cZ(h_j)\in \fG$.
Let $f=\sum_{j\in J}^{T(\fG)} h_j$. As above the $h_j$ converge to
zero with respect to the Frechet filter on $J$. Thus for every
$A'\in \fG^\pr$ there is a finite set of indices $F(A')$ such that
for every $j\in J\sm F(A')$ we have $h_j\in U(A', \fG)$. Then
$\cZ(h_j)\sep \ov A'$ yields $\bigcap_{j\in J\sm F(A')} \cZ(h_j)\sep
\ov A'$. Since $h_j\in \FU(\fG)$ and $F(A')$ is finite, it follows
that $A_1=\bigcap_{j\in F(A')} \cZ(h_j)\in \fG$. Then
$$A\cup A'=
\bigl(\bigcap_{j\in J\sm F(A')} \cZ(h_j) \cup A'\bigl)\cap
\bigl(\bigcap_{j\in F(A')} \cZ(h_j) \cup A'\bigl)=G\cap (A_1\cup
A')=A_1\cup A'
$$
is a cofinite set. Since this is true for any $A'\in
\fG^\pr$, we conclude that $A\in \fG^{\pr\pr}=\fG$.
\end{proof}

\section{Matrix notations}\label{S-matrix}

Suppose that $G$ is a set and $H$ is a set with an involution $*$.
Each map $\Psi: H\times G\to K$ can be consider as an $H\times G$
matrix over $K$ whose $(h,g)$-entry, $\Psi^g_h$, is $\Psi(h,g)$.
These notations resemble the notations in tensor calculus and, as we
will see below, they are quite advantageous, when we consider
multiplication of matrices.

The set of all such maps form a (left and right) vector space over
$K$ and will be denoted by ${}^H K^G$. If $H$ consists of one
element, a map $\Psi: H\times G\to K$ is said to be a \emph{row},
and we use small Greek letters $\al, \beta, \dots$ to denote rows.
Similarly, if $G$ consists of one element, then a map $\Psi: H\to K$
is a said to be a \emph{column}, and we use small boldfaced letters
$\mathbf{a}, \mathbf{b}$, $\dots$ to denote columns. In case, when
$H$ consists of one element $h$, we simplify notations:
$K^G={}^{\{h\}} K^G$; and similarly ${}^H K$ means ${}^H K^{\{g\}}$,
when $G$ consists of one element $g$.

Let $\del$ be the Kronecker symbol on $G$, that is, $\del$ is a map
from $G\times G$ to $K$ such that $\del^g_h=1$ if $g=h$ and
$\del^g_h=0$ otherwise. If $\Psi\in {}^HK^G$ and $h\in H$, then
$\Psi_h$ will denote the row of $\Psi$ with number $h$, therefore
$(\Psi_h)^g=\Psi^g_h$ for every $g\in G$. Similarly, $\Psi^g$ will
denote the column of $\Psi$ with number $g$, therefore
$(\Psi^g)_h=\Psi^g_h$ for every $h\in H$. In particular, $\del_g$ is
a row whose $g$th entry is $1$ and all remaining entries are zero,
and similarly for the column $\del^g$.

Suppose that $\Phi\in {}^JK^H$ and $\Psi\in {}^HK^G$. We say the the
product $\Theta=\Phi \cdot \Psi\in {}^J K^G$ is defined if, for
every pair $(j,g)\in J\times G$, we have $\Phi^{h^*}_j\cdot
\Psi^g_h\neq 0$ only for finitely many $h\in H$ and
$\Theta^g_j=\sum_{h\in H} \Phi^{h^*}_j \cdot \Psi^g_h$. This defines
a partial operation $^J K^H \times {}^HK^G\to {}^J K^G$.

Note that $\sum_{h\in H} \Phi^{h^*}_j \cdot \Psi^g_h=\sum_{h\in H}
\Phi^h_j \cdot \Psi^g_{h*}$. More precisely, the left and right
parts are defined simultaneously and, if they are defined, they are
equal. Immediately from the definition it follows that $\Th_j=
\Phi_j \cdot \Psi$ and $\Th^g= \Phi\cdot \Psi^g$.

If $G$ is a finite set, then the multiplication on $^GK^G$ is
defined everywhere, therefore $^GK^G$ is a ring isomorphic to the
ring of $|G|\times |G|$ matrices over $K$. But the unity of this
ring is given by the map $E: G\times G\to G$ such that $E^{g^*}_g=1$
for every $g\in G$ and zero otherwise. For instance, if $G$ consists
of one element $g$, then the ring $^{\{g\}}K^{\{g\}}$ is isomorphic
to $K$. If $G$ is infinite, the partial multiplication we have just
defined is usually not associative. Indeed, suppose that $G=\N$ with
the identical involution $*$, and let $\Phi$, $\Psi$ and $\Th$ be
the following matrices:

$$\Phi=\begin{pmatrix}
1&1&1&\dots\\
\end{pmatrix},
\quad \Psi=\begin{pmatrix}
1&-1&0&0&\dots\\
0&1&-1&0&\dots\\
0&0&1&-1&\dots\\
\vdots&\vdots&\vdots&\vdots&\ddots\\
\end{pmatrix}
\mbox{\ and \ } \Th=\begin{pmatrix}
1\\
1\\
1\\
\vdots\\
\end{pmatrix}.
$$

Then $(\Phi\cdot \Psi)\cdot \Th=1$ and $\Phi\cdot (\Psi\cdot
\Th)=0$.

The following lemma claims distributivity and can be easily verified
by direct calculations.

\begin{prop}\label{dist}
Suppose that $\Phi\in {}^JK^H$ and $\Psi, \Th\in {}^HK^G$. If both
products $\Phi\cdot \Psi$ and $\Phi\cdot \Th$ are defined, then
$\Phi\cdot (\Psi+\Th)$ is also defined and equal to $\Phi\cdot \Psi
+ \Phi\cdot \Th$.
\end{prop}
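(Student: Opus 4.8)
The plan is to verify the defining condition of the product pointwise in $(j,g)\in J\ts G$ and then read off the formula from the distributive law in $K$. First I would fix an arbitrary pair $(j,g)$ and introduce the two index sets
$$
S_\Psi=\{h\in H\mid \Phi^{h^*}_j\cdot \Psi^g_h\neq 0\},\qquad
S_\Th=\{h\in H\mid \Phi^{h^*}_j\cdot \Th^g_h\neq 0\}.
$$
By hypothesis both products $\Phi\cdot\Psi$ and $\Phi\cdot\Th$ are defined, which is precisely the assertion that $S_\Psi$ and $S_\Th$ are finite for this (and every) pair $(j,g)$.

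The key observation is that the support of the combined product is contained in $S_\Psi\cup S_\Th$. Since addition in ${}^H K^G$ is entrywise, $(\Psi+\Th)^g_h=\Psi^g_h+\Th^g_h$, and distributivity of multiplication in the skew field $K$ gives
$$
\Phi^{h^*}_j\cdot(\Psi+\Th)^g_h=\Phi^{h^*}_j\cdot\Psi^g_h+\Phi^{h^*}_j\cdot\Th^g_h.
$$
For any $h\notin S_\Psi\cup S_\Th$ both summands on the right vanish, so the left-hand side is zero. Hence $\{h\in H\mid \Phi^{h^*}_j\cdot(\Psi+\Th)^g_h\neq 0\}\seq S_\Psi\cup S_\Th$, a finite set; this establishes that $\Phi\cdot(\Psi+\Th)$ is defined.

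For the equality I would sum the displayed identity over $h\in H$. Because every term outside the finite set $S_\Psi\cup S_\Th$ vanishes, each of the three (a priori infinite) sums is in fact a finite sum and may be split additively:
$$
\Bigl(\Phi\cdot(\Psi+\Th)\Bigr)^g_j=\sum_{h\in H}\Phi^{h^*}_j\cdot(\Psi+\Th)^g_h=\sum_{h\in H}\Phi^{h^*}_j\cdot\Psi^g_h+\sum_{h\in H}\Phi^{h^*}_j\cdot\Th^g_h=(\Phi\cdot\Psi)^g_j+(\Phi\cdot\Th)^g_j.
$$
As $(j,g)$ was arbitrary, this proves $\Phi\cdot(\Psi+\Th)=\Phi\cdot\Psi+\Phi\cdot\Th$.

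There is no genuine obstacle here, which is why the paper records that the statement is verified by direct calculation. The only point requiring care is the bookkeeping of the finite-support conditions: one must check that the supports line up so that the formal sum over all $h\in H$ is meaningful and splits, and the containment $\{h\mid \text{term}\neq 0\}\seq S_\Psi\cup S_\Th$ is exactly what makes this clean. In particular the argument uses nothing about $K$ beyond the distributive law, so it would go through verbatim over any ring.
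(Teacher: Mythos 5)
Your proof is correct and is exactly the direct entrywise calculation the paper has in mind: the paper gives no written proof, stating only that the claim ``can be easily verified by direct calculations,'' and your argument---fixing $(j,g)$, bounding the support of the combined terms by the finite set $S_\Psi\cup S_\Th$ via distributivity in $K$, and then splitting the resulting finite sum---is the standard way to carry that out. Your closing observation that nothing beyond the distributive law of $K$ is used (so the statement holds over any ring) is also accurate.
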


An arbitrary row $\ga\in K^G$ is uniquely determined by its
coordinates $\ga^g\in K$, $g\in G$. Thus $\ga$ is a topological sum
(see Section~\ref{S-sums}) with respect to Tychonoff topology on
$\Map(G,K)$: $\ga= \sum_{g\in G}^{\Tych} \ga^g \del_g$. Furthermore,
if $\ga\in \FU(\fG)$, then clearly $\ga= \sum_{g\in G}^{\fG} \ga^g
\del_g$. Similarly,  each column $\ba\in {}^H K$ is uniquely
determined by its coordinates $a_h\in K$, $h\in H$, hence we can
write $\ba=\sum_{h\in H}^{\Tych} \del^h a_h$; and if $\ba\in
\FU(\fH)$, then $\ba:=\sum_{h\in H}^{\fH} \del^h a_h$.

\section{Continuous linear maps}

\def\Fresh{Frechet}

Recall that $\fG$ is a filter on a set $G$ and $\fH$ is a filter on
a set $H$. In what follows we will always assume that $\fG$ and
$\fH$ contain Frechet filters. Thus $\FU(\Cof G)\seq \FU(\fG)$ and
$\FU(\Cof H)\seq \FU(\fH)$. In this section we describe continuous
linear maps of topological linear spaces $\FU(\fG)\to \FU(\fH)$.

We will consider $\FU(\fG)$ as a left or right $K$-vector space. To
specify the side, we will use $K\{\fG\}$ to denote this space
considered as a left vector space over $K$, and call it a
\emph{space of $\fG$-zero rows}. Similarly $\{\fG\}K$ will denote
$\FU(\fG)$ considered as a right vector space over $K$, and will be
called a \emph{space of $\fG$-zero columns}. We use $K[G]$ to denote
$K\{\Cof G\}$, and $[G]K$  to denote $\{\Cof G\}K$. If $G$ is a
group, then $K[G]$ with the operations defined in
Section~\ref{S-matrix} is isomorphic to the usual group ring. In
what follows we will always assume that $G$ and $H$ are endowed with
an involution.

Recall that we agreed to denote rows with small Greek letters $\al,
\beta \dots$, and columns with small boldfaced letters $\ba, \bb,
\dots$. Suppose that $\phi$ is a map from $\FU(\fG)$ to $\Map(H,K)$.
First we consider $\FU(\fG)$ as a right vector space $\{\fG\} K$ and
$\Map(H,K)$ as the space of columns ${}^HK$. The image of a column
$\ba\in \{\fG \} K$ will be denoted by $\ph[\ba]$. Then we can
assign to $\phi$ an $H\times G$ matrix $\Phi$, whose $(h,g^*)$-entry
is equal to $\phi[\del^g]_h:$
$$\Phi^{g^*}_h=\ph[\delta^g]_h\,.$$
(Recall that $\del^g$ is a column whose $g$th coordinate is $1$ and
all the remaining coordinates are zero, and $\ph[\delta^g]_h$ is the
$h$th coordinate of the column $\ph[\delta^g]$). We say that $\Phi$
is a \emph{matrix of $\phi$}. For instance,
$\Phi^{g^*}=\phi[\del^g]$ is the $g^*$th column of $\Phi$.

The \emph{zero set of $\Phi$}, $\cZ(\Phi)$, is a collection of all
$(h,g)\in H\ts G$ such that $\Phi^g_h=0$. This is in accordance
with our previous definition of the zero set of a map.

%On the other hand we may consider $\FU(\fG)$ as a left $K$-vector space $K\{\fG\}$
%and $\Map(H,K)$ as a space of rows $K^H$. In this case the image of a row
%$\al\in K\{\fG\}$ will be denoted by $[\al]\phi$. Then we can assign to $\phi$
%an $G\times H$ matrix $\Phi^t$ whose $(g^*,h)$-entry is equal to
%$([\del_g] \phi)^h:$
%$$(\Phi^t)^h_{g^*}=([\del_g] \phi)^h\,.$$
%(Recall that $\del_g$ is a row whose $g$th is $1$ and all the remaining entries are
%$0$, and $([\del_g] \phi)^h$ is the $h$th coordinate of the column $([\del_g] \phi)^h$).
%Clearly $\Phi^t$ is a transpose of $\Phi$.

Now we restrict ourselves to the case, when $\phi$ is a map from
$\{\fG\}K$ to $\{\fH\}K\seq \Map(H,K)$.

\begin{definition}\label{defn:Glin}
A right (left) linear map $\ph:\FU(\fG)\to \FU(\fH)$ is said to be
\emph{$\fG$-linear}, if for any $\fG$-summable family $h_j$, $j\in
J$, where $h_j\in \FU(\fG)$, the family $\ph[h_j]$, $j\in J$ is
$\fH$-summable and
$$\ph\bigl[\suml_{j\in J}{}^{\fG}{h}_j\bigl]=\suml_{j\in J}{}^{\fH}\ph[{h}_j].$$
\end{definition}

Apparently this condition bears no connection with topology. But below
(see Theorem~\ref{thm:endo}) we will see that $\fG$-linearity is the same
as continuity. The following lemma shows that both $\fG$-linear or continuous
linear operators are uniquely determined by their matrices.

\begin{lemma}\label{lin}
Suppose that $\phi: \{\fG\}K\to \{\fH\}K$ is either $\fG$-linear or a
linear continuous map. If $\Phi$ is a matrix of $\phi$ and $\ba\in \{\fG\}K$,
then $\phi[\ba]=\Phi\cdot \ba$.
\end{lemma}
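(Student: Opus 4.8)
The plan is to prove the identity $\ph[\ba] = \Phi \cdot \ba$ coordinate-wise, reducing both sides to a statement about how $\ph$ acts on a $\fG$-expansion of the column $\ba$. The key observation is that by the last paragraph of Section~\ref{S-matrix}, a column $\ba \in \{\fG\}K$ can be written as a $\fG$-sum of its coordinate columns, namely $\ba = \suml_{g \in G}^{\fG} \del^g a_g$, where $a_g = \ba_g \in K$. This is exactly the kind of $\fG$-summable family that the two hypotheses on $\ph$ are designed to handle, so I expect the proof to split into two parallel cases matching the disjunctive hypothesis ``$\fG$-linear \emph{or} continuous.''

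First I would treat the $\fG$-linear case. Since $\ba = \suml_{g\in G}^{\fG}\del^g a_g$ and $\ph$ is $\fG$-linear, Definition~\ref{defn:Glin} gives that the family $\ph[\del^g a_g]$, $g \in G$, is $\fH$-summable with $\ph[\ba] = \suml_{g\in G}^{\fH} \ph[\del^g a_g]$. By right linearity, $\ph[\del^g a_g] = \ph[\del^g] a_g$, and $\ph[\del^g]$ is precisely the column $\Phi^{g^*}$ by the definition of the matrix of $\ph$. Now I would evaluate the $h$th coordinate: condition 1) of Definition~\ref{deff} for $\fH$-summability tells us that for each fixed $h$ only finitely many terms contribute and that $\ph[\ba]_h = \suml_{g\in G} (\Phi^{g^*})_h\, a_g = \suml_{g\in G}\Phi^{g^*}_h\, a_g$. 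Rewriting the summation index via the involution ($g \mapsto g^*$, using $g^{**}=g$) turns this into $\suml_{g\in G}\Phi^{g}_h\, a_{g^*}$, which is exactly the $h$th coordinate of the matrix product $\Phi\cdot\ba$ as defined in Section~\ref{S-matrix}. The reindexing by $*$ is the one genuinely fiddly bookkeeping point, since the matrix convention places $\Phi^{g^*}_h = \ph[\del^g]_h$ rather than $\Phi^g_h$.

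For the continuous case I would first invoke Theorem~\ref{top-UA}~3): since $\fG \sep \Cof(G)$, the finite-support columns are dense in $\{\fG\}K$, and on finite-support columns the identity $\ph[\ba]=\Phi\cdot\ba$ is immediate from linearity (it is just a finite $K$-linear combination of the columns $\ph[\del^g]=\Phi^{g^*}$). The plan is then to pass to the limit: express $\ba$ as the limit of its finite truncations in $T(\fG)$, apply continuity of $\ph$ to get $\ph[\ba]$ as the corresponding limit of $\ph$ applied to the truncations, and check that the matrix-product columns $\Phi\cdot(\text{truncation})$ converge coordinate-wise to $\Phi\cdot\ba$. Here Theorem~\ref{sum} is the crucial bridge, letting me move between $\fG$-summability (which computes $\Phi\cdot\ba$ coordinate-wise) and $T(\fG)$-summability (which is what continuity preserves); balancedness is not assumed in this lemma, so I would be careful to use only the direction of Theorem~\ref{sum}~1) that does not require it, working with the finite-support approximants directly rather than invoking full $\fG$-summability of the expansion.

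The main obstacle I anticipate is the continuous case, specifically verifying that the matrix product $\Phi\cdot\ba$ is even defined and that convergence is compatible with the partial, non-associative multiplication of Section~\ref{S-matrix}. Because the product $\Phi\cdot\ba$ need not exist for arbitrary $\ba$, I must use the hypothesis that $\ph$ lands in $\{\fH\}K$ (so $\ph[\ba]$ has a well-defined zero set in $\fH$) to control the supports and guarantee that, for each fixed $h$, the coordinate sum $\suml_{g}\Phi^g_h a_{g^*}$ has only finitely many nonzero terms. I would extract this finiteness from continuity by testing against the neighbourhoods $U(A,\fH)$, $A\in\fH^\pr$, translating the definedness of the product into a statement about the convergence already guaranteed by continuity, and then conclude by matching coordinates exactly as in the $\fG$-linear case.
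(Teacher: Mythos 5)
Your proposal is correct and follows essentially the same route as the paper's own proof: expand $\ba=\suml_{g\in G}{}^{\fG}\del^g a_g$, convert this to a topological sum via Theorem~\ref{sum}~1), push the sum through $\ph$ (by $\fG$-linearity in one case, by continuity in the other), and match coordinates with the definition of the matrix product. Your caution about balancedness is in fact a slight refinement: the paper's proof invokes Theorem~\ref{sum}~2) to assert $\fH$-summability of the family $\ph[\del^g]a_g$, which formally presupposes $\fH$ balanced, whereas, as you note, only coordinate-wise (Tychonoff) convergence is needed to identify $\ph[\ba]_h$ with the finite sum $\suml_{g\in G}\Phi^{g^*}_h a_g$, and this is available without balancedness since $\Cof(H)\seq\fH^{\pr}$.
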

\begin{proof}
We will prove this lemma only when $\phi$ is continuous. The proof
in the case, when $\phi$ is $\fG$-linear, is similar.

As we have already noticed (see a remark after Lemma~\ref{dist}),
$\ba=\suml_{g\in G}{}^{\fG} \del^g a_g$, $a_g\in K$, hence
$\ba=\suml_{g\in G}{}^{T(\fG)} \del^g a_g$ by Theorem~\ref{sum}.
Since $\phi$ is continuous and linear, we obtain
$\phi[\ba]=\suml_{g\in G}{}^{T(\fH)} \phi[\del^g] a_g$. Invoking
Theorem~\ref{sum} again, we get $\phi[\ba]=\suml_{g\in G}{}^{\fH}
\phi[\del^g] a_g=\suml _{g\in G}{}^{\fH} \Phi^{g^*} a_g$. On the
other hand, by the definition of product of matrices, $(\Phi\cdot
\ba)_h = \suml_{g\in G} \Phi^{g^*}_h a_g$, therefore
$\phi[\ba]=\Phi\cdot \ba$, as desired.
\end{proof}

Our next objective is to decide when the left multiplication by an
$H\times G$ matrix $\Phi$ defines a continuous linear map $\phi$
from $\{\fG\} K$ to  $\{\fH\} K$. As a first approximation we
consider the following condition:

\begin{equation}\label{eq:m1}
\bigcap_{s\in \ov B}\Za(\Phi_s)\in\fG^{\bot*} \ \text{ for every } \
B\in\fH^\bot\,.
\end{equation}

It guarantees that $\phi$ is a linear map.

\begin{remark}\label{prop:ex}
Let $\Phi$  be an $H\times G$-matrix satisfying (\ref{eq:m1}). Then

a) The rows of $\Phi$ belong to the space $K\{\fG^{\bot*}\}$.

%In particular, if $\Phi$ is a row (that is, $H$ consists of one element), then
%(\ref{eq:m1}) is equivalent to $\Phi$ being in $K\{\fG^{\bot*}\}$;

b) the rule $\ba\to\Phi\cdot\ba$, where $\ba\in\{\fG\}K$, defines a
$K$-linear map $\phi: \{\fG\}K\rightarrow {}^HK$.
\end{remark}
\begin{proof}
a) If $B=H\sm \{h\}$, then $\ov B=\{h\}$. Since $B\in \fH$,
(\ref{eq:m1}) yields $\cZ(\Phi_h)\in \fG^{\pr *}$.

b) By what we have just proved, $\cZ(\Phi_h)\in \fG^{\pr *}$ for
every row $\Phi_h$ of $\Phi$. Since $\ba\in \{\fG\} K$ and
$\cZ(\Phi_h)\in \fG^{\pr *}$, the product $\Phi_h\cdot \ba$ is
defined (see Proposition~\ref{hh}) and belongs to $K$. Then
$\Phi\cdot \ba$ is defined and belongs to ${}^HK$.
\end{proof}

We need to put one extra condition on $\Phi$ to ensure that the
image of $\phi$ is contained in $\{\fH\} K$.

\begin{equation}\label{eq:m2}
\bigcap_{t\in \ov{A}}\Za(\Phi^{t^*})\in\fH \ \text{ for every } \
A\in\fG.
\end{equation}

%Remark that if $H$ is a singleton then this condition is fulfilled.

The next proposition shows  that (\ref{eq:m1}) and  (\ref{eq:m2})
together imply that $\phi$ is continuous.

\begin{prop}\label{prop:cont}
Let $\Phi$ be an $H\times G$-matrix satisfying (\ref{eq:m1}) and
(\ref{eq:m2}). Then the rule $\ba\to\Phi\cdot\ba$ defines a
continuous linear map $\phi: \{\fG\}K\to\{\fH\}K$.
\end{prop}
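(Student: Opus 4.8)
The plan is as follows. By Remark~\ref{prop:ex}, condition (\ref{eq:m1}) already guarantees that the rule $\ba\mapsto\Phi\cdot\ba$ is a well-defined $K$-linear map $\phi\colon\{\fG\}K\to{}^HK$, so only two things remain to be established: first, that $\phi$ actually lands in $\{\fH\}K=\FU(\fH)$, and second, that $\phi$ is continuous for the topologies $T(\fG)$ and $T(\fH)$. I would treat these separately, using (\ref{eq:m2}) for the former and (\ref{eq:m1}) for the latter.

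For the image, I would fix $\ba\in\{\fG\}K$ and put $A=\cZ(\ba)\in\fG$, so that $\supp(\ba)\seq\ov A$. Condition (\ref{eq:m2}) applied to this $A$ gives $W:=\bigcap_{t\in\ov A}\cZ(\Phi^{t^*})\in\fH$. The key observation is that for $h\in W$ one has $\Phi^{t^*}_h=0$ for every $t\in\ov A$; since only the indices $g\in\supp(\ba)\seq\ov A$ contribute to the finite sum $(\Phi\cdot\ba)_h=\suml_{g\in G}\Phi^{g^*}_h a_g$, every term vanishes and $(\Phi\cdot\ba)_h=0$. Thus $W\seq\cZ(\Phi\cdot\ba)$, and since $\fH$ is closed under supersets this forces $\cZ(\Phi\cdot\ba)\in\fH$, i.e.\ $\phi[\ba]\in\FU(\fH)$.

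For continuity, since $\phi$ is linear it suffices to verify continuity at $0$, that is, to show that every basic zero neighbourhood $U(B,\fH)$ with $B\in\fH^\pr$ contains the image of some $U(A,\fG)$ with $A\in\fG^\pr$. Applying (\ref{eq:m1}) to $B$ gives $D:=\bigcap_{s\in\ov B}\cZ(\Phi_s)\in\fG^{\pr*}$, and I would take $A=D^*$, which lies in $\fG^\pr$ because $D\in\fG^{\pr*}=(\fG^\pr)^*$ means $D=E^*$ for some $E\in\fG^\pr$, whence $D^*=E$. Then for $\ba\in U(A,\fG)$, i.e.\ $\supp(\ba)\seq A=D^*$, and for any $s\in\ov B$, each contributing index $g\in\supp(\ba)$ satisfies $g^*\in D\seq\cZ(\Phi_s)$, so $\Phi^{g^*}_s=0$ and hence $(\Phi\cdot\ba)_s=0$. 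This says $\supp(\Phi\cdot\ba)\seq B$, i.e.\ $\phi[\ba]\in U(B,\fH)$, giving exactly $\phi[U(A,\fG)]\seq U(B,\fH)$.

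The two sum manipulations are routine; the step needing the most care is the bookkeeping of the involution $*$. One must keep straight that the entry $\Phi^{g^*}_h$ attached to a column index $g$ carries $g^*$, so that the support condition $\supp(\ba)\seq D^*$ translates into $g^*\in D$ and thereby meshes with the row zero-sets appearing in (\ref{eq:m1}); likewise, it is precisely the identity $\fG^{\pr*}=(\fG^\pr)^*$ that legitimizes the choice $A=D^*\in\fG^\pr$. Once the involutions are tracked correctly, no filter-theoretic input beyond (\ref{eq:m1}) and (\ref{eq:m2}) is required.
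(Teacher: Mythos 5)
Your proof is correct and takes essentially the same route as the paper's own: you invoke Remark~\ref{prop:ex} for well-definedness and linearity, apply (\ref{eq:m2}) to $A=\cZ(\ba)$ to show the image lies in $\{\fH\}K$, and use (\ref{eq:m1}) with the choice $A=D^*$ (the paper's $C^*$) to verify continuity at zero via the basic neighbourhoods $U(B,\fH)$. The involution bookkeeping you emphasize, including $\fG^{\pr*}=(\fG^\pr)^*$ and the translation of $\supp(\ba)\seq D^*$ into $g^*\in D\seq\cZ(\Phi_s)$, matches the paper's computation exactly, so nothing further is needed.
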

\begin{proof}
By Remark~\ref{prop:ex}, $\Phi\cdot\ba\in {}^H K$. We prove that
$\Phi\cdot \ba\in \{\fH\} K$. Indeed, from $\ba\in \{\fG\}K$ it
follows that $A=\Za(\ba)\in\fG$. Furthermore, (\ref{eq:m2}) implies
that $B=\bigcap_{t\in \ov{A}}\Za(\Phi^{t^*})\in \fH$, therefore
$B\times \ov{A^*}\subseteq\Za(\Phi)$. To show that $\Phi\cdot\ba\in
\{\fH\}K$, take any $h\in B$. Then

$$(\Phi\cdot \ba)_h=\suml_{t\in G}\Phi_h^{t^*} a_t=
\suml_{t\in A}\Phi_h^{t^*}\cdot0+\suml_{t\in \ov A}0\cdot a_t=0.
$$

It follows that $\Za(\Phi\cdot\ba)\supseteq B$, therefore
$\Za(\Phi\cdot\ba)\in\fH$, as desired.

As we have already noticed (see Lemma~\ref{dist}),
$\Phi\cdot(\ba+\bb)=\Phi\cdot \ba+\Phi\cdot \bb$ for all columns
$\ba, \bb\in \{\fG\} K$. Furthermore, clearly $(\Phi\cdot \ba)\cdot
k=\Phi\cdot(\ba \cdot k)$ for every $k\in K$. It follows easily that
$\phi$ is linear.

To prove that $\phi$ is continuous, it suffices to check that $\phi$
is continuous at zero. This means that for every zero neighbourhood
$U(B,\fH)$, $B\in\fH^\bot$ in $\{\fH\}K$, there exists a zero
neighbourhood $U(A,\fG)$, $A\in\fG^\bot$ in $\{\fG\}K$ such that
$$\Phi\cdot U(A,\fG)\subseteq U(B,\fH).$$

From (\ref{eq:m1}) we obtain $C=\bigcap_{s\in \ov B}\Za(\Phi_s)\in
\fG^{\bot*}$. If $A=C^*$, then $A\in\fG^\bot$, and we prove that $A$
is as required. By the definition of $A$, we have $\ov B\times
A^*\subseteq\Za(\Phi)$, that is, $\Phi_s^{g^*}=0$ for any $s\in\ov
B$ and $g\in A$. Take any $\ba\in U(A,\fG)$, hence $a_g=0$ for every
$g\in\ov A$.  Then for any $s\in \ov B$ we obtain:
\[(\Phi\cdot \ba)_s=\suml_{g\in G}\Phi^{g^*}_s a_g=
\suml_{g\in \ov A}\Phi^{g^*}_s\cdot0+\suml_{g\in A}0\cdot a_g=0.\]
It follows that $\Za(\Phi\cdot \ba)\supseteq \ov B$, therefore
$\Phi\cdot \ba\in U(B,\fH)$.
\end{proof}

Below (see Theorem~\ref{thm:endo}) we will show the the converse is
also true: if $\ph:\{\fG\}K\to\{\fH\}K$ is a continuous linear map,
then its matrix $\Phi$ satisfies both (\ref{eq:m1}) and
(\ref{eq:m2}). But first we connect these conditions with filters on
direct products.

\begin{lemma}\label{prop:MM}
A matrix $\Phi\in{}^HK^G$ satisfies (\ref{eq:m1}) and (\ref{eq:m2})
iff $\Za(\Phi)\in\lan\fH,\fG^{\bot*}\ran$.
\end{lemma}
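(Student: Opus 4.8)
The plan is to read off the membership $\Za(\Phi)\in\lan\fH,\fG^{\bot*}\ran$ through the slice reformulation a)$'$, b)$'$ recorded just after Definition~\ref{defn:dek}, applied with the second filter taken to be $\fG^{\bot*}$ rather than $\fG$, and then to recognise the two resulting conditions as exactly (\ref{eq:m1}) and (\ref{eq:m2}).

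First I would compute the two families of slices of $X=\Za(\Phi)$. By definition $(h,t)\in\Za(\Phi)$ means $\Phi^t_h=0$, so $H_t(\Za(\Phi))=\{h\in H\mid \Phi^t_h=0\}=\Za(\Phi^t)$ is the zero set of the $t$th column, and symmetrically $G_s(\Za(\Phi))=\Za(\Phi_s)$ is the zero set of the $s$th row. Feeding the second of these into b)$'$ gives `` $\bigcap_{s\in\ov B}\Za(\Phi_s)\in\fG^{\bot*}$ for every $B\in\fH^\bot$ '', which is condition (\ref{eq:m1}) verbatim; this half of the equivalence needs nothing further.

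The substance is in matching a)$'$ with (\ref{eq:m2}). Written out, a)$'$ says $\bigcap_{t\in\ov A}\Za(\Phi^t)\in\fH$ for every $A$ in the perp of the second filter, namely $(\fG^{\bot*})^\bot$. Two adjustments are then needed. For the range of $A$, Remark~\ref{star} allows $*$ and $\bot$ to be interchanged, so $\fG^{\bot*}=(\fG^*)^\bot$ and hence $(\fG^{\bot*})^\bot=(\fG^*)^{\bot\bot}$; using that $\fG$ is balanced this collapses to $\fG^*$, so $A$ ranges precisely over $\fG^*$. For the index twist, I would substitute $A=C^*$ with $C\in\fG$, so that $\ov A=(\ov C)^*$, and reindex by $t=s^*$; as $s$ runs through $\ov C$ the element $t=s^*$ runs through $\ov A$, turning $\bigcap_{t\in\ov A}\Za(\Phi^t)$ into $\bigcap_{s\in\ov C}\Za(\Phi^{s^*})$. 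Thus a)$'$ becomes `` $\bigcap_{s\in\ov C}\Za(\Phi^{s^*})\in\fH$ for every $C\in\fG$ '', which is (\ref{eq:m2}).

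Combining the two reformulations, $\Za(\Phi)\in\lan\fH,\fG^{\bot*}\ran$ is equivalent to a)$'$ and b)$'$ holding together, hence to (\ref{eq:m2}) and (\ref{eq:m1}) holding together, which is the claim. The step I expect to carry the weight is the identity $(\fG^{\bot*})^\bot=\fG^*$: it is exactly here that the balancedness $\fG^{\bot\bot}=\fG$ enters. Without it one would only have $\fG^*\seq(\fG^{\bot*})^\bot$, which yields the implication a)$'\Rightarrow$(\ref{eq:m2}) (equivalently the direction $\Za(\Phi)\in\lan\fH,\fG^{\bot*}\ran\Rightarrow$ (\ref{eq:m1}) and (\ref{eq:m2})) but not its converse, so I would be careful to invoke the balancedness of $\fG$ at precisely this point, the remaining bookkeeping with the involution $*$ and the row/column slices being routine.
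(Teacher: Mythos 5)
Your proof is correct and follows essentially the same route as the paper's: both read off membership in $\lan\fH,\fG^{\bot*}\ran$ through the slice conditions a)$'$ and b)$'$, identify the slices as $H_t=\Za(\Phi^t)$ and $G_s=\Za(\Phi_s)$, match b)$'$ with (\ref{eq:m1}) verbatim, and convert a)$'$ into (\ref{eq:m2}) by the involution reindexing. If anything, you are more explicit than the paper at the one delicate point: the paper's silent substitution ``then $\fG^\bot$ should be replaced by $\fG^*$'' is precisely your identity $(\fG^{\bot*})^\bot=\fG^{\bot\bot*}=\fG^*$, which does use that $\fG$ is balanced --- a hypothesis not stated in the lemma itself but in force where it is applied (Theorem~\ref{thm:endo}), and without which, as you correctly observe, only the direction $\Za(\Phi)\in\lan\fH,\fG^{\bot*}\ran\Rightarrow$ (\ref{eq:m1}), (\ref{eq:m2}) survives.
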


\begin{proof}
Let us rewrite a)$'$ from  Definition~\ref{defn:dek} replacing
$\fG$  by $\fG^{\bot*}$ and $X$ by $\Za(\Phi)$. Then $\fG^\bot$
should be replaced by $\fG^*$.

\begin{equation}\label{eq:MM2}
\Za(\Phi)\in\langle \fH,\fG^{\bot*}\rangle \LR
\begin{cases}
\ \bigcap\limits_{t\in \ov A} H_t\in\fH \ \text{ for every } \ A\in\fG^*;\\
\ \bigcap\limits_{s\in\ov B} G_s\in \fG^{\bot*} \ \text{ for every } \ B\in\fH^\bot\,,\\
\end{cases}
\end{equation}
where $H_t=\set{h\in H\mid (h,t)\in \Za(\Phi)}$ and $G_s=\set{g\in
G\mid (s,g)\in \Za(\Phi)}$.

But clearly $H_t=\Za(\Phi^t)$ and $G_s=\Za(\Phi_s)$. Then
(\ref{eq:MM2}) can be rewritten as follows:

\begin{equation}\label{eq:MM3}
\Za(\Phi)\in\langle \fH,\fG^{\bot*}\rangle\LR
\begin{cases}
\ \bigcap\limits_{t\in \ov A}\Za(\Phi^t)\in\fH \ \text{ for every } \ A\in\fG^*;\\
\ \bigcap\limits_{s\in\ov B}\Za(\Phi_s)\in\fG^{\bot*} \ \text{ for
every } \ B\in\fH^\bot.
\end{cases}
\end{equation}
Applying the involution, we see that the first condition in
(\ref{eq:MM3}) is equivalent to (\ref{eq:m2}). Furthermore, the
second condition in (\ref{eq:MM3}) coincides with (\ref{eq:m1}).
\end{proof}

The following theorem characterizes continuous linear maps between
spaces $\{\fG\}K$ and $\{\fH\}K$ in terms of filters on $H\times G$.

\begin{theorem}\label{thm:endo}
Suppose that $\fH$ and $\fG$ are balanced filters on $H$ and $G$ and
$\Phi$ is an $H\ts G$-matrix. Then the following are equivalent.

a) The left multiplication by $\Phi$ defines a continuous linear map
$\phi$ from $\{\fG\}K$ to $\{\fH\}K$.

b) The zero set $\Za(\Phi)$ belongs to the filter $\langle
\fH,\fG^{\bot*}\rangle$;

c) Each row $\Phi_h$ belongs to $K\{\fG^{\pr *}\}$. Furthermore, for every $A\in\fG$
the collection $\Phi^{t^*}$, $t\in\ov A$ is $\fH$-summable and

\begin{equation}\label{20}
\Phi\cdot \bigl(\suml_{t\in\ov A}{}^\th\delta^t k_t\bigl) =
\suml_{t\in\ov A}{}^{\fH}\Phi^{t^*}k_t
\end{equation}
for any set of coefficients $k_t\in K$.

d) the left multiplication by $\Phi$ defines a $\fG$-linear map $\phi$
from $\{\fG\}K$  to $\{\fH\}K$.

If $\phi$ is a linear continuous map from $\{\fG\}K$ to $\{\fH\}K$, then
its matrix $\Phi$ satisfies these equivalent conditions.
\end{theorem}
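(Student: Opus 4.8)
The plan is to prove a cyclic chain of implications together with the already-established pieces, so that all four conditions become equivalent and the final sentence follows. The scaffolding is largely in place: Lemma~\ref{prop:MM} gives the equivalence (b)$\Leftrightarrow$(the pair of matrix conditions (\ref{eq:m1}) and (\ref{eq:m2})), and Proposition~\ref{prop:cont} shows that (\ref{eq:m1}) and (\ref{eq:m2}) together yield (a). So the implication (b)$\Rightarrow$(a) is immediate by combining these two results. What remains is to close the loop, and I would do this by proving (a)$\Rightarrow$(d)$\Rightarrow$(c)$\Rightarrow$(b), after which the final claim — that an \emph{arbitrary} continuous linear map $\phi$ has a matrix satisfying these conditions — is handled by Lemma~\ref{lin}, which guarantees that such a $\phi$ must act as left multiplication by its matrix $\Phi$, so (a) applies to it.

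First I would prove (a)$\Rightarrow$(d). Assuming $\phi=\Phi\cdot(-)$ is continuous, take any $\fG$-summable family $h_j$, $j\in J$. By Theorem~\ref{sum}(1) it is $T(\fG)$-summable with the same sum $f=\sum_{j}^{\fG}h_j=\sum_{j}^{T(\fG)}h_j$. Continuity of $\phi$ preserves topological sums (as recalled in Section~\ref{S-sums}), so $\phi[f]=\sum_{j}^{T(\fH)}\phi[h_j]$ in $\{\fH\}K$; then Theorem~\ref{sum}(2), applicable because $\fH$ is balanced, converts this back to an $\fH$-sum $\phi[f]=\sum_{j}^{\fH}\phi[h_j]$. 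This is exactly $\fG$-linearity. Next, for (d)$\Rightarrow$(c), I would specialize the $\fG$-linearity to the particular families appearing in (c). Each row statement $\Phi_h\in K\{\fG^{\pr *}\}$ follows from the requirement that $\phi$ land in $\{\fH\}K\subseteq{}^HK$ combined with the product being everywhere defined, which forces $\cZ(\Phi_h)\in\fG^{\pr*}$ as in Remark~\ref{prop:ex}(a). For the summability equation (\ref{20}), observe that for $A\in\fG$ the column $\sum_{t\in\ov A}^{\th}\delta^t k_t$ is genuinely in $\{\fG\}K$ (its zero set contains $A\in\fG$), and that this is the Tychonoff sum of the $\fG$-summable family $\delta^t k_t$; applying $\fG$-linearity gives $\phi[\sum_{t\in\ov A}^{\fG}\delta^tk_t]=\sum_{t\in\ov A}^{\fH}\phi[\delta^t]k_t=\sum_{t\in\ov A}^{\fH}\Phi^{t^*}k_t$, which is (\ref{20}), and the $\fH$-summability of $\Phi^{t^*}$, $t\in\ov A$ is exactly the assertion that this right-hand $\fH$-sum exists.

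Finally, for (c)$\Rightarrow$(b), I would translate the two clauses of (c) directly into the matrix conditions (\ref{eq:m1}) and (\ref{eq:m2}) and then invoke Lemma~\ref{prop:MM}. The row condition $\Phi_h\in K\{\fG^{\pr*}\}$ says $\cZ(\Phi_h)\in\fG^{\pr*}$ for each $h$, and the strengthening to (\ref{eq:m1}) comes from unpacking the $\fH$-summability in (c): the $\fH$-summability of $\Phi^{t^*}$, $t\in\ov A$ means $\bigcap_{t\in\ov A}\cZ(\Phi^{t^*})\in\fH$, which is precisely (\ref{eq:m2}), and condition (\ref{eq:m1}) is recovered by the symmetric reading of the definition of $\fG$-summability against $\fH^\pr$. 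I expect the main obstacle to lie here, in matching the bookkeeping between the two sides of the pairing: the involution $*$ and the swap between $\fG^{\pr*}$ and $\fG^*$ (as already carefully tracked in Lemma~\ref{prop:MM}) must be applied consistently, and one must verify that $\fH$-summability of a family of columns is genuinely equivalent to the intersection-of-zero-sets membership rather than merely implied by it — this is where Definition~\ref{deff} and the balancedness of $\fH$ (via Theorem~\ref{sum}(2)) have to be used with care. Once (\ref{eq:m1}) and (\ref{eq:m2}) are in hand, Lemma~\ref{prop:MM} closes the cycle to (b), and the concluding remark follows since Lemma~\ref{lin} identifies any continuous $\phi$ with left multiplication by its matrix.
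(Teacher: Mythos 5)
Your cycle b)\,$\Ra$\,a)\,$\Ra$\,d)\,$\Ra$\,c)\,$\Ra$\,b) genuinely differs from the paper's, which runs b)\,$\Ra$\,a)\,$\Ra$\,c)\,$\Ra$\,d)\,$\Ra$\,b). Your a)\,$\Ra$\,d) (Theorem~\ref{sum}(1) to pass from the $\fG$-sum to a $T(\fG)$-sum, continuity to preserve the topological sum, Theorem~\ref{sum}(2) with $\fH$ balanced to convert back) is correct and much shorter than the paper's c)\,$\Ra$\,d), which is the longest step of the paper's proof: a purely combinatorial interchange-of-summation argument establishing $\fG$-linearity from the matrix conditions alone, without any topology. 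Your ordering buys brevity by routing everything through continuity; the paper's ordering buys the extra information that condition c) yields $\fG$-linearity non-topologically. Your d)\,$\Ra$\,c) is also correct, with one misattribution: the row condition $\Phi_h\in K\{\fG^{\bot*}\}$ follows from the everywhere-definedness of $\Phi\cdot\ba$ via part 2) of the dual-pair Proposition~\ref{hh} (this is where balancedness of $\fG$ enters), not via Remark~\ref{prop:ex}(a), which runs in the opposite direction, from (\ref{eq:m1}) to the rows. The handling of b)\,$\Ra$\,a) and of the final sentence via Lemma~\ref{lin} matches the paper exactly.

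The genuine gap is in c)\,$\Ra$\,b), precisely where you predicted trouble, but the tools you name cannot fix it. Condition (\ref{eq:m2}) is indeed immediate from clause 2) of Definition~\ref{deff}. However, (\ref{eq:m1}) does \emph{not} follow from any ``symmetric reading'' of the definition of summability, nor from Theorem~\ref{sum}(2): for $B\in\fH^\bot$ with $\ov B$ infinite, you must show that only finitely many $t\in\ov A$ satisfy $\supp(\Phi^{t^*})\not\seq B$, a statement uniform over all of $\ov B$, whereas clause 1) of Definition~\ref{deff} gives only, for each \emph{single} $h\in H$, finiteness of $\{t\mid \Phi^{t^*}_h\neq 0\}$. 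The missing idea is the engine of the paper's d)\,$\Ra$\,b): by Theorem~\ref{sum}(1) the $\fH$-summable family $\Phi^{t^*}$, $t\in\ov A$, is $T(\fH)$-summable, hence tends to zero along the Frechet filter on $\ov A$ (the necessary condition of convergence from Section~\ref{S-sums}); so there is a cofinite subset $A_0'$ of $\ov A$ with $\Phi^{t^*}\in U(B,\fH)$, i.e. $\Za(\Phi^{t^*})\sep \ov B$, for all $t\in A_0'$. Then $A_0'^{\,*}\seq A':=\bigcap_{s\in\ov B}\Za(\Phi_s)$, and since $A\cup A_0'$ is cofinite, so is $A^*\cup A'$ for every $A\in\fG$, which is exactly $A'\in\fG^{\bot*}$. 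With this argument supplied, Lemma~\ref{prop:MM} closes your cycle as you intended; without it, the step as sketched would fail.
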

\begin{proof}
b) $\Ra$ a) follows from Proposition~\ref{prop:cont} and Lemma~\ref{prop:MM}.

a) $\Ra$ c). Since the product $\Phi\cdot \ba$ is defined for every $\ba\in \{\fG\} K$,
 by the definition of product of matrices we obtain $\Phi_h\in K\{\fG^{\pr *}\}$.

If $t\in \ov A$, then $\cZ(\del^t)\sep A$, hence
$\cZ(\del^t k_t)\sep A$ for any $k_t\in K$, and therefore
$\bigcap_{t\in \ov A}\cZ(\del^t k_t)\sep A$. It follows that
$\bigcap_{t\in \ov A}\cZ(\del^t k_t)\in \fG$, hence the family
$\del^t k_t$, $t\in \ov A$ is summable and clearly

$$\suml_{t\in \ov A}{}^{\th} \del^t k_t=\suml_{t\in \ov A}{}^{\fG} \del^t k_t\,.$$

Furthermore, by Theorem~\ref{sum} we have
$\suml_{t\in \ov A}{}^{\fG} \del^t k_t=\suml_{t\in \ov A}{}^{T(\fG)} \del^t k_t$.
Then
$$\phi\bigl[\suml_{t\in\ov A}{}^{\th}\delta^tk_t\bigl]=
\phi\bigl[\suml_{t\in\ov A}{}^{T(\fG)}\delta^tk_t\bigl]=
\suml_{t\in\ov A}{}^{T(\fH)}\ph[\delta^t]k_t\,.
$$

Since $\phi$ is continuous, the family $\Phi^{t^*} k_t$, $t\in \ov A$ is summable.
By Theorem~\ref{sum} and the previous equality, we obtain
$$
\Phi\cdot \bigl(\suml_{t\in\ov A}{}^\th\delta^t k_t\bigl)=
\phi \bigl[\suml_{t\in\ov A}{}^\th\delta^t k_t\bigl]=
\suml_{t\in\ov A}{}^{T(\fH)}\ph[\delta^t]k_t=
\suml_{t\in\ov A}{}^{T(\fH)}\Phi^{t^*}k_t=
\suml_{t\in\ov A}{}^{\fH}\Phi^{t^*}k_t
,
$$
as desired.

c) $\Ra$ d). Because $\Phi_h\in K\{\fG^{\pr *}\}$ for every $h\in H$,
the product $\Phi\cdot \ba$ is defined for every $\ba \in \{ \fG\} K$.
Furthermore, (\ref{20}) yields that $\Phi\cdot \ba \in \{\fH \} K$.
Indeed, if $\ba \in \{\fG\}K$, then $\cZ(\ba)\in \fG$. If $A=\cZ(\ba)$,
then $\ba = \suml_{t\in \ov A}{}^{\Tych} \del^t a_t$. By the assumption,
$$\Phi\cdot \ba =
\Phi\cdot \bigl(\suml_{t\in\ov A}{}^\th\delta^t a_t\bigl) =
\suml_{t\in\ov A}{}^{\fH}\Phi^{t^*} a_t\,.$$
Since the family $\Phi^{t^*}$, $t\in \ov A$ is $\fH$ summable, then clearly the family
$\Phi^{t^*}a_t$, $t\in \ov A$ is $\fH$-summable, therefore
$\Phi\cdot \ba=\suml_{t\in \ov A}{}^{\fH} \Phi^{t^*}a_t\in \fH$ (see a remark after
Definition~\ref{deff}).

Suppose that $\bb_j$, $j\in J$, where $\bb_j\in \{\fG\}K$, is a $\fG$-summable family.
Then
$A=\bigcap_{j\in J}\Za({\bb}_j)\in\fG$ and $\ba=\suml_{j\in J}{}^\fG{\bb}_j\in\{\fG\}K$.

From $\cZ(\bb_j)\sep A$ it follows that

\begin{equation}\label{tt3}
{\bb}_j=\suml_{t\in\ov A}{}^{\th}\,\delta^tk_{tj},
\end{equation}
where $j\in J$ and $k_{tj}=({\bb}_j)_t\in K$. Since the family $\bb_j$,
$j\in J$ is $\fG$-summable, for every $t\in \ov A$ there are only finitely
many $j\in J$ such that $k_{tj}\neq 0$. Thus we can set $k_t=\sum_{j\in J}
k_{tj}$ and then ${\ba}=\suml_{t\in\ov A}{}^\th\,\delta^t k_t$.

First we check that the family $\Phi\cdot {\bb}_j$, $j\in J$ is
$\fH$-summable. Indeed, by the assumption,
$\Phi\cdot {\bb}_j=\suml_{t\in\ov A}{}^{\fH} \Phi^{t^*} k_{tj}$,
therefore $\bigcap_{j\in J}\Za(\Phi\cdot {\bb}_j)\sep \bigcap_{t\in\ov
A}\Za(\Phi^{t^*})$. Because the family $\Phi^{t^*}$, $t\in\ov A$ is
$\fH$-summable, it follows that $\bigcap_{t\in \ov A}
\cZ(\Phi^{t^*})\in \fH$, hence
$\bigcap_{j\in J}\Za(\Phi\cdot {\bb}_j)\in\fH$.

It remains to prove that for every $h\in H$ there are only finitely
many $j\in J$ such that $(\Phi\cdot \bb_j)_h\neq 0$. Indeed, the family
$\Phi^{t^*}$, $t\in\ov A$
is $\fH$-summable, hence $\Delta=\{t\in\ov A \mid \Phi^{t^*}_h\neq 0\}$ is
a finite set. Furthermore, for any $t\in\ov A$ the set $J_t=\{j\in J
\mid k_{tj}\neq 0\}$ is also finite. Hence the set
$J_0:=\bigcup_{t\in\Delta} J_t$ is finite being a finite union of
finite sets. If $j\notin J_0$, then $j\notin J_t$ for any
$t\in\Delta$, and thus $k_{tj}=0$. Therefore,
$$
(\Phi\cdot {\bb}_j)_h=\suml_{t\in\ov A}\Phi^{t^*}_hk_{tj}=\suml_{t\in\ov
A\sm\Delta}0\cdot k_{tj}+ \suml_{t\in\Delta}\Phi^{t^*}_h\cdot0=0.
$$

Now we prove the equality
$\Phi\cdot \suml_{j\in J}{}^{\fG}{\bb}_j =
\suml_{j\in J}{}^{\fH}\Phi\cdot \bb_j.$ Indeed,
by the assumption we have
$\Phi\cdot \suml_{j\in J}{}^{\fG}{\bb}_j = \Phi\cdot \ba=
\suml_{t\in\ov A}{}^{\fH}\Phi^{t^*}k_t$. On the other hand

$$
\suml_{j\in J}{}^{\fH}\Phi\cdot {\bb}_j = \suml_{j\in
J}{}^{\fH}\suml_{t\in\ov A}{}^{\fH}\Phi^{t^*}k_{tj}= \suml_{t\in\ov
A}{}^{\fH}\Phi^{t^*}\suml_{j\in J}k_{tj}= \suml_{t\in\ov
A}{}^{\fH}\Phi^{t^*}k_t\,.
$$
Indeed, the first equality follows from (\ref{tt3}) and the
assumption; and the last equality follows from $k_t=\sum_j k_{tj}$.
The second equality will be checked coordinate-wise. Take any $h\in
H$. Then
$$\bigl(\suml_{j\in J}{}^{\fH}\suml_{t\in\ov A}{}^{\fH}\Phi^{t^*}k_{tj}\bigl)_h=
\suml_{j\in J}\Bigl(\suml_{t\in\ov A}{}\Phi^{t^*}k_{tj}\Bigl)_h\,,$$
where we omitted the superscript $\fH$ in the right hand part of the
equality, because the sum by $J$ in this part is finite. By the
definition of multiplication of matrices, this is equal to
$\suml_{j\in J}\suml_{t\in\ov A}\Phi^{t^*}_hk_{tj}$. We have already
proved that $\Phi^{t^*}_h k_{tj}\neq 0$ iff $t\in \Del$ and
$k_{tj}\neq 0$, where $\Del$ is a finite set. It follows that the set
of pairs $(j,t)$ such that $\Phi^{t^*}_h k_{tj}\neq 0$ is also
finite. Therefore me can change the summation order to get
$$
\suml_{j\in J}\Bigl(\suml_{t\in\ov A}{}\Phi^{t^*}k_{tj}\Bigl)_h=
\suml_{t\in\ov A}\Phi_h^{t^*}\suml_{j\in J}k_{tj}=
\Bigl(\suml_{t\in\ov A}{}^{\fH}\Phi^{t^*}\suml_{j\in
J}k_{tj}\Bigl)_h
$$

d) $\Ra$ b). Let $\ba=\suml_{g\in G}{}^\th\delta^gk_g\in \{\fG\}K$.
Since $\phi$ is $\fG$-linear, we obtain
$$
 \ph[{\ba}]=\suml_{g\in G}{}^{\fH}\ph[\delta^g]k_g=\suml_{g\in G}{}^{\fH}\Phi^{g^*}k_g
 =\Phi\cdot\ba.
$$

We will prove that $\Za(\Phi)\in\langle \fH,\fG^{\bot*}\rangle$.
First we check condition (\ref{eq:m2}): $B'=\bigcap_{t\in \ov{A}}\Za(\Phi^{t^*})\in \fH$
for every $A\in\fG$.

%Indeed, the inclusion $B'\times \ov{A^*}\seq \Za(\Phi)$  follows from the definition
%of $B'$.

Because the family $\delta^t$, $t\in \ov A$ is
$\fG$-summable, by the assumption, the family of columns
$\Phi^{t^*}$, $t\in \ov A$ is $\fH$-summable.  By Theorem~\ref{sum}
and the necessary condition of convergency (see
Section~\ref{S-sums}) we obtain
\[\liml_{\Cof(\ov{A})}{}^{T(\fH)}\Phi^{t^*}=0\,,\]
that is, the limit of the map $t\to \Phi^{t^*}$ from $\ov A$ to
${}^H K$ with respect to the Frechet filter on $\ov A$ is equal
to zero. This means that for any zero neighborhood $U(B,\fH)$,
$B\in\fH^{\bot}$ in $\{\fH\}K$ there exists a cofinite subset $A_0'$
of $\ov{A}$ such that $\Phi^{t^*}\in U(B,\fH)$ for any $t\in A_0'$.
In other words, if $t\in A_0'$, then $\Za(\Phi^{t^*})\sep \ov B$,
that is, $\Za(\Phi^{t^*})\cup B=H$.

Then
$$
B'\cup B=\bigl(\bigcap_{t\in \ov{A}}\Za(\Phi^{t^*})\bigl)\cup B=
\bigcap_{t\in\ov{A}}(\Za(\Phi^{t^*})\cup B)=
$$
$$
= \bigl(\bigcap_{t\in A_0'}(\Za(\Phi^{t^*})\cup B)\bigl) \cap
\bigl(\bigcap_{t\in \ov{A\cup A_0'}}(\Za(\Phi^{t^*})\cup B)\bigl)=
$$
$$
=H\cap\bigl(\bigcap_{t\in \ov{A\cup A_0'}}(\Za(\Phi^{t^*})\cup
B)\bigl) =\bigcap_{t\in\ov{A\cup A_0'}}(\Za(\Phi^{t^*})\cup B)\,.
$$
Note that the last set is cofinite in $H$. Indeed, $\Phi^{t^*}\in \{\fH\}K$
and $B\in \fH^\pr$ implies that each $\Za(\Phi^{t^*})\cup B$ is cofinite, and
$\ov{A\cup A'_0}=\ov A\sm A_0'$ is a finite set.

Thus we proved that the union $B'\cup B$ is cofinite for any
$B\in\fH^\bot$. This means that $B'\in\fH^{\bot\bot}=\fH$, as
required.

Now we have to check (\ref{eq:m1}):
$$
A':=\bigcap_{s\in\ov B}\Za(\Phi_s)\in\fG^{\bot*}\ \text{ for every } \
B\in \fH^\bot\,.
$$
 This is the same as the
union $A^*\cup A'$ is cofinite for any $A\in \fG$. We have the
following sequence of equivalences:
$$
t\in A'\LR \forall s\in \ov B\,\,(t\in\Za(\Phi_s))\LR\forall s\in
\ov B\,\,(\Phi_s^t=0)\LR
$$
$$
\LR \forall s\in \ov B\,\,(s\in\Za(\Phi^t))\LR \Za(\Phi^t)\sep \ov
B\LR \Za(\Phi^t)\cup B=H.
$$
Hence:
$$A'=\set{t\in G\mid \Za(\Phi^t)\cup B=H}.$$
We have already proved that there is a cofinite subset $A_0'$ of
$\ov A$ such that $\Za(\Phi^{t^*})\cup B=H$ for all $t\in A_0'$.
Then $A^{\prime*}_0\subseteq A'$. Since $A\cup A'_0$ is cofinite in $H$,
the same is true for $A^*\cup A^{\prime*}_0$. From
$A^{\prime*}_0\seq A'$ it follows that $A^*\cup A'$ is also
cofinite in $H$, as desired.
\end{proof}

If $X$ is a $K$-linear topological space, then a map
$\phi: X\to K$ is said to be a \emph{linear form}, if $\phi$ is linear
and continuous. This space can be endowed with a weak topology:
a net of linear forms $\phi_i: X\to K$, $i\in I$ converges to $\phi$,
if $\phi_i(x)$ converges to $\phi(x)$ for every $x\in X$.

\begin{cor}\label{cor}
Suppose that $\fG$ is a balanced filter on $G$. Then the space of
continuous linear forms of the topological linear space of columns
$(\{\fG\}K,T(\fG))$ is isomorphic to the topological linear space of
rows $(K\{\fG^{\bot*}\}, T(\fG^{\bot*}))$
\end{cor}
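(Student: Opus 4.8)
The plan is to deduce the corollary from Theorem~\ref{thm:endo} by realizing the target $K$ as a space $\{\fH\}K$ over a one-point index set. Take $H=\{\ast\}$ with its (forced) identity involution and $\fH=\cP(H)$; this is a balanced filter, it contains $\Cof(H)=\cP(H)$, and $\FU(\cP H)=\Map(H,K)$ is canonically $K$ via $f\mapsto f(\ast)$, so $\{\fH\}K=K$. With this choice an $H\ts G$ matrix is a single row $\Phi_\ast\in K^G$ and $\Phi^{g^\ast}_\ast=\phi[\del^g]$.

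First I would specialize the equivalence a)$\LR$b) of Theorem~\ref{thm:endo}. A continuous linear form $\phi\colon\{\fG\}K\to K$ is exactly a continuous linear map $\{\fG\}K\to\{\fH\}K$, hence the left multiplication by a matrix $\Phi$ with $\cZ(\Phi)\in\lan\fH,\fG^{\bot\ast}\ran$; by Lemma~\ref{prop:MM} this means $\Phi$ satisfies (\ref{eq:m1}) and (\ref{eq:m2}). Because $H$ is a single point these collapse: the left-hand side of (\ref{eq:m2}) is a subset of $H$ and so automatically lies in $\cP(H)=\fH$, while (\ref{eq:m1}) with $B=\pty$ reads $\cZ(\Phi_\ast)\in\fG^{\bot\ast}$, i.e. $\Phi_\ast\in K\{\fG^{\bot\ast}\}$. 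Conversely every such row yields an admissible matrix. Thus $\Lambda\colon\phi\mapsto\Phi_\ast$ is a bijection from the continuous linear forms onto $K\{\fG^{\bot\ast}\}$, and by Lemma~\ref{lin} $\phi[\ba]=\Phi_\ast\cdot\ba=\lan\Phi_\ast,\ba\ran$.

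Next I would verify that $\Lambda$ is an isomorphism of left $K$-vector spaces: additivity is clear, and $\Phi^{g^\ast}_\ast=\phi[\del^g]$ together with $(k\phi)[\del^g]=k\,\phi[\del^g]$ gives $\Lambda(k\phi)=k\,\Lambda(\phi)$; since the forms on a right space form a left $K$-space, this matches the left $K$-space $K\{\fG^{\bot\ast}\}$. This is precisely the identification of $\FU(\fG^{\bot\ast})$ with the space of linear forms on $\FU(\fG)$ announced in Section~\ref{S-Pairing}, and is the main substance of the corollary.

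The delicate point, and where I expect the real work, is matching the topologies. The weak topology on the forms has subbasic zero-neighbourhoods $W_\ba=\{\phi\mid\phi[\ba]=0\}$, which $\Lambda$ sends to $\{\psi\in K\{\fG^{\bot\ast}\}\mid\lan\psi,\ba\ran=0\}$, whereas $T(\fG^{\bot\ast})$ has the base $U(A,\fG^{\bot\ast})$ with $A\in(\fG^{\bot\ast})^\bot=\fG^\ast$ (using Remark~\ref{star} and balancedness of $\fG$). One inclusion is straightforward: given $\ba_1,\dots,\ba_n\in\{\fG\}K$ set $A=\bigl(\bigcap_j\cZ(\ba_j)\bigr)^\ast\in\fG^\ast$; then $\supp\psi\seq A$ forces $\supp\psi\cap(\supp\ba_j)^\ast=\pty$, so $\lan\psi,\ba_j\ran=0$ and $U(A,\fG^{\bot\ast})\seq\bigcap_j\Lambda(W_{\ba_j})$. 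This shows $T(\fG^{\bot\ast})$ is at least as fine as the transported weak topology, equivalently that $T(\fG^{\bot\ast})$-convergence of rows implies weak convergence of forms. The genuinely hard part is the reverse inclusion: to capture each basic neighbourhood $U(A,\fG^{\bot\ast})$ inside a \emph{finite} intersection of the $W_\ba$. Here one would try to exploit that every form is $\fG$-linear (equivalences c)/d) of Theorem~\ref{thm:endo}), so that $\phi[\ba]=\sum_g\phi[\del^g]a_g$ is controlled by the values $\phi[\del^g]$ for $g$ in the complement of a set of $\fG$. This is exactly the step demanding care, and it is where one must decide whether the weak topology is truly as fine as $T(\fG^{\bot\ast})$ or only coarser; pinning this down is, in my view, the main obstacle, the linear isomorphism above being immediate from Theorem~\ref{thm:endo}.
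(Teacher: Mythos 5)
Your core argument is precisely the paper's proof: the paper's entire proof of Corollary~\ref{cor} consists of specializing Theorem~\ref{thm:endo} to the case where $H$ consists of one element, and your first two paragraphs carry this out correctly and in more detail than the paper does (checking that $\fH=\cP(H)$ is balanced and contains $\Cof(H)$, collapsing condition (\ref{eq:m2}) to a triviality and condition (\ref{eq:m1}) with $B=\pty$ to $\cZ(\Phi_*)\in\fG^{\bot*}$, and verifying left $K$-linearity of $\phi\mapsto\Phi_*$). Up to that point your proposal matches the paper's route exactly, and is complete.

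The ``delicate point'' you flag is not part of the paper's proof at all: the paper stops at the linear identification and makes no claim that the weak topology on the forms coincides with $T(\fG^{\bot*})$. Your hesitation is in fact justified in the strong sense that the reverse inclusion you were hunting for is \emph{false} in general, so no amount of care would close it. Take $\fG=\cP(G)$ with $G$ infinite (a balanced filter): then $\fG^{\bot*}=\Cof(G)$ and $T(\Cof G)$ is discrete by Example~\ref{top}~1), so $U(\pty,\fG^{\bot*})=\{0\}$ would have to contain some finite intersection $\bigcap_{j=1}^n\{\phi\mid\phi[\ba_j]=0\}$. But choosing $n+1$ distinct points $g_0,\dots,g_n\in G$, the $n$ homogeneous conditions $\lan\psi,\ba_j\ran=0$ on a row $\psi$ supported in $\{g_0,\dots,g_n\}$ form a system of $n$ equations in $n+1$ unknowns over the skew field $K$ and so have a nonzero solution; hence no finite intersection of these sets is $\{0\}$, and the weak topology is strictly coarser than $T(\fG^{\bot*})$. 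Thus the inclusion you did prove ($T(\fG^{\bot*})$ refines the transported weak topology) is the only one that holds, and the corollary must be read as asserting a linear isomorphism, with $T(\fG^{\bot*})$ simply naming the natural topology the row space carries for later use (e.g., in Proposition~\ref{prop:Assoc}), not a homeomorphism with the weak topology. With that reading, your proposal proves everything the paper's proof does, by the same method.
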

\begin{proof}
Applying Theorem~\ref{thm:endo} to the case when $H$ consists of
one element, we obtain that every continuous linear map
from $\{\fG\}K$ to $K$ is given by a left multiplication by a row
from $K\{\fG^{\bot*}\}$.
\end{proof}

For further applications we need a dual variant of Theorem~\ref{thm:endo}.

\begin{remark}\label{rem}
Suppose that $\Psi$ is an $H\times G$ matrix.
Then the following are equivalent:

a) the right multiplication by $\Psi$ defines a linear continuous map
from $K\{\fG\}$ to $K\{\fH\}$;

b) $\cZ(\Psi)\in \lan \fG^{\pr *}, \fH \ran$.

If $\psi$ is a continuous linear map from $K\{\fG\}$ to $K\{\fH\}$, then its matrix
$\Psi$ satisfies these equivalent conditions.
\end{remark}

\section{The ring of continuous operators}\label{S-ring}

Suppose that $\Phi$ is an $H\times G$ matrix, $\ga$ is a row and $\ba$ is a
column such that  the products $\ga\cdot\Phi$ and $\Phi\cdot \ba$ are defined.

\begin{remark}
$$
\Za(\Phi\cdot \ba)\sep\bigcap_{t\in G\sm\Za(\ba)}\Za(\Phi^{t^*})
\quad \text{ and } \quad
\Za(\ga\cdot\Phi)\sep\bigcap_{s\in H\sm\Za(\ga)}\Za(\Phi_{s^*}).
$$
\end{remark}
\begin{proof}
Indeed, if $h\in\bigcap_{t\in G\sm\Za(\ba)}\Za(\Phi^{t^*})$, then

\[(\Phi\cdot \ba)_h=\suml_{t\in G}\Phi_h^{t^*} a_t=
\suml_{t\in \Za(\ba)}\Phi_h^{^*}\cdot 0 + \suml_{t\in
G\sm\Za(\ba)}\Phi_h^{t^*} a_t =0+\suml_{t\in G\sm\Za(\ba)}0\cdot
a_t=0.\]

The second inclusion has a similar proof.
\end{proof}

Now we prove an auxiliary lemma.

\begin{lemma}\label{prpty:04}
For any $\beta\in K\{\fG^{\bot*}\}$ and $\bb\in\{\fH\}K$ the matrix
$\Phi=\bb\cdot\beta\in{}^HK^G$ satisfies (\ref{eq:m1}) and
(\ref{eq:m2}). Therefore (by Proposition~\ref{prop:cont}) the left multiplication
by $\Phi$ defines a continuous linear map from $\{\fG\}K$ to $\{\fH\} K$.
\end{lemma}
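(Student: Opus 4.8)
The plan is to write out the entries of the rank-one matrix $\Phi=\bb\cdot\beta$ explicitly and then read off the zero sets of its rows and columns. Viewing $\bb$ as an element of ${}^HK^{\{c\}}$ and $\beta$ as an element of ${}^{\{c\}}K^G$ over a one-element index set $\{c\}$, the matrix product reduces to a single term, so that the $(h,g)$-entry is $\Phi^g_h=b_h\beta^g$, where $b_h$ are the coordinates of $\bb$ and $\beta^g$ those of $\beta$. Hence the $s$th row is $\Phi_s=b_s\beta$ (the row $\beta$ scaled on the left by the scalar $b_s\in K$) and the $g$th column is $\Phi^g=\bb\,\beta^g$ (the column $\bb$ scaled on the right by $\beta^g\in K$).

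First I would record the two containments that drive everything. Since scaling a map by a scalar can only enlarge its zero set, for every $s\in H$ we have $\cZ(\Phi_s)\sep\cZ(\beta)$: the zero set equals $G$ when $b_s=0$ and equals $\cZ(\beta)$ when $b_s\neq0$. Dually, for every $t\in G$ we have $\cZ(\Phi^{t^*})\sep\cZ(\bb)$, being $H$ when $\beta^{t^*}=0$ and $\cZ(\bb)$ when $\beta^{t^*}\neq0$. Neither containment depends on the particular index, which is what makes the verification painless.

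With these in hand both conditions follow by taking intersections and invoking that filters are closed under supersets. For (\ref{eq:m1}), fix $B\in\fH^\pr$; every term of $\bigcap_{s\in\ov B}\cZ(\Phi_s)$ contains $\cZ(\beta)$, so the intersection contains $\cZ(\beta)$, and since $\beta\in K\{\fG^{\pr*}\}$ gives $\cZ(\beta)\in\fG^{\pr*}$, the intersection lies in $\fG^{\pr*}$. For (\ref{eq:m2}), fix $A\in\fG$; every term of $\bigcap_{t\in\ov A}\cZ(\Phi^{t^*})$ contains $\cZ(\bb)$, so the intersection contains $\cZ(\bb)\in\fH$ (using $\bb\in\{\fH\}K$) and hence lies in $\fH$. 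The concluding assertion is then immediate from Proposition~\ref{prop:cont}.

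I do not expect a genuine obstacle: the statement is a direct computation once the entries $\Phi^g_h=b_h\beta^g$ are written down. The only bookkeeping to watch is matching the two sides correctly, namely that the rows of $\Phi$ are governed by $\beta$ (and thus by $\fG^{\pr*}$) while the columns are governed by $\bb$ (and thus by $\fH$); the involution in the index $t^*$ plays no role precisely because the lower bound $\cZ(\bb)$ is independent of $t$.
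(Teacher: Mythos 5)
Your proposal is correct and follows essentially the same route as the paper: you compute the entries of the rank-one matrix, observe that each row's zero set contains $\cZ(\beta)\in\fG^{\pr*}$ and each column's zero set contains $\cZ(\bb)\in\fH$, and conclude by closure of filters under supersets. The only difference is cosmetic — the paper verifies (\ref{eq:m1}) explicitly and declares (\ref{eq:m2}) ``similar,'' whereas you spell out both sides.
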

\begin{proof}
First we check (\ref{eq:m1}). Take any $B\in\fH^\bot$. If $s\in H$, then
$(\bb\cdot\beta)_s=b_s\beta$,
where $b_s\in K$ is the $s$th coordinate of $\bb$. If $b_s=0$, then
$\Za((\bb\cdot\beta)_s)=G$, and otherwise
$\Za((\bb\cdot\beta)_s)=\Za(\beta)$. Therefore,

\[\bigcap_{s\in \ov B}\Za((\bb\cdot\beta)_s)\supseteq\Za(\beta)\in\fG^{\bot*},\]
which implies $\bigcap_{s\in \ov
H}\Za((\bb\cdot\beta)_s)\in\fG^{\bot*}$, as desired. The proof of
(\ref{eq:m2}) is similar.
\end{proof}

Recall that in Section~\ref{S-matrix} we gave an example that a product of
matrices is not associative. It will become associative if we put
some extra restrictions.

\begin{prop}\label{prpty:06}
Suppose that  $\Upsilon$, $\Phi$ and $\Psi$ are matrices of the
following size: $\Upsilon\in{}^JK^H$, $\Phi\in{}^HK^G$,
$\Psi\in{}^GK^I$. Further assume that

a) for any pair $(j,g)$ there exist only finitely many $h\in H$
such that $\Upsilon_j^{h^*}\Phi_h^g\neq 0$;

b) for any pair $(h,i)$  there exist only finitely many $g\in G$
such that $\Phi_h^{g^*}\Psi_g^i\neq 0$;

c) for any pair $(j,i)\in J\times I$ the exist only finitely many
$(h,g)\in H\times G$ such that
$\Upsilon_j^{h^*}\Phi_h^{g^*}\Psi_g^i\neq0$;

Then the products $\Upsilon\cdot(\Phi\cdot\Psi)$ and
$(\Upsilon\cdot\Phi)\cdot\Psi$ are defined and equal.
\end{prop}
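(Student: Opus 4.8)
The plan is to compute the $(j,i)$-entry of each of the two iterated products and to show that both collapse to one and the same double sum
$$
S_j^i=\sum_{(h,g)\in H\times G}\Upsilon_j^{h^*}\Phi_h^{g^*}\Psi_g^i,
$$
whose finiteness for each fixed $(j,i)$ is exactly what hypothesis c) supplies. Everything then reduces to a finite rearrangement of addition in $K$.

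First I would record that the two inner products are defined. By the definition of matrix multiplication, $(\Upsilon\cdot\Phi)_j^g=\sum_{h\in H}\Upsilon_j^{h^*}\Phi_h^g$, and hypothesis a) (applied with $g$, and also with $g^*$, since the involution is a bijection of $G$) says that for each such pair the summand is nonzero for only finitely many $h$; hence $\Upsilon\cdot\Phi\in{}^JK^G$ is defined, and so is the entry $(\Upsilon\cdot\Phi)_j^{g^*}=\sum_{h}\Upsilon_j^{h^*}\Phi_h^{g^*}$. Symmetrically, $(\Phi\cdot\Psi)_h^i=\sum_{g\in G}\Phi_h^{g^*}\Psi_g^i$ is defined by hypothesis b), so $\Phi\cdot\Psi\in{}^HK^I$ exists.

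Next I would write out the two outer entries. For the left association,
$$
\bigl((\Upsilon\cdot\Phi)\cdot\Psi\bigr)_j^i=\sum_{g\in G}(\Upsilon\cdot\Phi)_j^{g^*}\Psi_g^i
=\sum_{g\in G}\Bigl(\sum_{h\in H}\Upsilon_j^{h^*}\Phi_h^{g^*}\Bigr)\Psi_g^i,
$$
where the last step distributes $\Psi_g^i$ through the finite inner sum (Proposition~\ref{dist}); expanding yields $\sum_g\sum_h\Upsilon_j^{h^*}\Phi_h^{g^*}\Psi_g^i$. A symmetric computation, using left distributivity, gives
$$
\bigl(\Upsilon\cdot(\Phi\cdot\Psi)\bigr)_j^i=\sum_{h\in H}\Upsilon_j^{h^*}\Bigl(\sum_{g\in G}\Phi_h^{g^*}\Psi_g^i\Bigr)=\sum_h\sum_g\Upsilon_j^{h^*}\Phi_h^{g^*}\Psi_g^i.
$$
That the outer products are genuinely defined also follows from c): whenever an outer summand is nonzero, it forces some triple $\Upsilon_j^{h^*}\Phi_h^{g^*}\Psi_g^i\neq0$ (a nonzero product forces both factors nonzero, and a nonzero inner sum forces at least one of its summands nonzero), so hypothesis c) confines the relevant index to a finite set.

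Finally comes the heart of the argument. By c) the set $\{(h,g):\Upsilon_j^{h^*}\Phi_h^{g^*}\Psi_g^i\neq0\}$ is finite, so $S_j^i$ is a genuinely finite sum and both of the iterated expressions above are finite rearrangements of it; commutativity and associativity of addition in $K$ then identify all three, giving $\bigl((\Upsilon\cdot\Phi)\cdot\Psi\bigr)_j^i=S_j^i=\bigl(\Upsilon\cdot(\Phi\cdot\Psi)\bigr)_j^i$ for every $(j,i)$. The one genuine subtlety — the step I would be most careful about — is precisely this interchange of summation order: it is legitimate only because c) guarantees finitely many nonzero terms, and without c) the two iterated sums need not agree, which is exactly the mechanism behind the non-associativity example of Section~\ref{S-matrix}.
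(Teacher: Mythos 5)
Your proof is correct and follows essentially the same route as the paper's: hypotheses a) and b) give the inner products, and c) reduces both iterated entries to finite rearrangements of the single double sum $\sum_{(h,g)}\Upsilon_j^{h^*}\Phi_h^{g^*}\Psi_g^i$, which coincide by associativity and commutativity of addition in $K$. Your explicit check that the outer products are defined (using c) together with the absence of zero divisors in the skew field $K$) is a detail the paper's terse proof leaves implicit, but it is the same argument.
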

\begin{proof}
Clearly a) and b) is nothing more that the existence of the products
$\Upsilon\cdot\Phi$ and $\Phi\cdot\Psi$. From c) we obtain the
following equality:

\[\suml_{(h,g)\in H\times G}\Upsilon_j^{h^*}\Phi_h^{g^*}\Psi_g^i=
\suml_{h\in H}\Upsilon_j^{h^*}\bigl(\suml_{g\in G}\Phi_h^{g^*}\Psi_g^i\bigl)=
\suml_{g\in G}\bigl(\suml_{h \in
H}\Upsilon_j^{h^*}\Phi_h^{g^*}\bigl)\Psi_g^i\,,\] which implies
associativity.
\end{proof}

This proposition applies in the following situation.

\begin{prop}\label{prpty:07}
Suppose that $\bb_1$, $\bb_2$, $\ldots$ are columns from $\{\fG\}K$, and
$\beta_1,\beta_2,\ldots$ are rows from $K\{\fG^{\bot*}\}$. Then
any (finite) alternating product
$\bb_1\cdot\beta_1\cdot \bb_2\cdot\beta_2 \cdot\ldots$ or
$\beta_1 \cdot \bb_1\cdot\beta_2\cdot \bb_2\cdot \dots$ does not
depend on the way we put brackets on it.
\end{prop}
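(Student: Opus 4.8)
The plan is to recognise every factor, and every partial product, as a \emph{continuous linear map} between the two spaces $K$ and $\FU(\fG)$, and to read matrix multiplication as composition of such maps; since composition of maps is genuinely associative, the proposition follows. Concretely, a column $\bb\in\{\fG\}K$ determines the continuous linear map $K\to\{\fG\}K$, $k\mapsto\bb k$ (continuity is automatic, $K$ being discrete and one-dimensional), with matrix $\bb$; a row $\beta\in K\{\fG^{\bot*}\}$ determines, by Corollary~\ref{cor}, a continuous linear form $\{\fG\}K\to K$, $\ba\mapsto\beta\cdot\ba$, with matrix $\beta$; and a product $\bb\cdot\beta$ determines the continuous operator $\{\fG\}K\to\{\fG\}K$ of Lemma~\ref{prpty:04}. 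All of these are the instances of Theorem~\ref{thm:endo} (and of Lemma~\ref{lin}) in which the source or target filter lives on a one-point set, so in each case the map acts on columns by left multiplication by its matrix.

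The engine of the proof is the following \emph{composition lemma}: if $\phi,\psi$ are composable continuous maps among these shapes, with matrices $\Phi,\Psi$, then the product $\Phi\cdot\Psi$ is defined and is precisely the matrix of $\phi\circ\psi$. To see that $\Phi\cdot\Psi$ is defined, fix an entry $(j,g)$; the row $\Phi_j$ lies in $K\{\fG^{\bot*}\}$ by Theorem~\ref{thm:endo}, and the column $\Psi^g$ lies in $\{\fG\}K$, so by the pairing of Proposition~\ref{hh} the set $\supp(\Phi_j)^*\cap\supp(\Psi^g)$ — which is exactly the set of $h$ with $\Phi^{h^*}_j\Psi^g_h\neq0$ — is finite (when the contracted index set degenerates to a point the finiteness is trivial). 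To identify $\Phi\cdot\Psi$ with the matrix $N$ of $\phi\circ\psi$, I would evaluate on the basis columns $\del^g$ of the source: using Lemma~\ref{lin} for both maps, $N^{g^*}_j=(\phi\circ\psi)[\del^g]_j=\phi[\Psi\cdot\del^g]_j=(\Phi\cdot\Psi^{g^*})_j=\sum_h\Phi^{h^*}_j\Psi^{g^*}_h=(\Phi\cdot\Psi)^{g^*}_j$, whence $N=\Phi\cdot\Psi$; in particular $\phi\circ\psi$ is continuous with matrix $\Phi\cdot\Psi$.

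With the composition lemma in hand I would finish by induction on the number of factors, following the standard reduction of generalized associativity to binary re-associations. Any bracketing of the alternating string $\bb_1\cdot\beta_1\cdot\bb_2\cdots$ splits at top level as (a bracketing of a left segment)$\,\cdot\,$(a bracketing of a right segment); by the inductive hypothesis each segment is defined and equals the matrix of the composite of the corresponding maps, both of which are continuous, and the composition lemma then shows that the whole product is defined and equals the matrix of the composite of \emph{all} the maps determined by $\bb_1,\beta_1,\bb_2,\dots$. Since composition of maps is associative, this composite — and hence the matrix representing it — is independent of the split point and of all internal bracketing; the base case $N=2$ is just Lemma~\ref{prpty:04} together with the always-defined pairing $\beta\cdot\bb$ of Proposition~\ref{hh}. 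The row-initial products $\beta_1\cdot\bb_1\cdots$ are handled verbatim.

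The main obstacle is the composition lemma, and within it the uniform verification that every product arising is defined: this is where the balancedness of $\fG$ and the finiteness built into the pairing $(\,\FU(\fG^{\bot*}),\FU(\fG)\,)$ of Proposition~\ref{hh} are essential, and it must be checked across all four shapes (column, row, operator, scalar) of the partial products, including the degenerate one-point-set cases. An alternative, more computational route avoids the operator language and reduces directly to triple re-associations $A\cdot(x\cdot B)=(A\cdot x)\cdot B$ with $x$ a single factor, verifying hypotheses (a)--(c) of Proposition~\ref{prpty:06} for each; there the delicate point is the two-dimensional finiteness condition (c), which the argument above sidesteps by passing through the matrix of the composite map.
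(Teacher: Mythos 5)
Your proposal is correct, but it takes a genuinely different route from the paper's. The paper argues computationally: it reduces, via the standard Lang induction you also invoke, to the two triple re-associations $(\bb\cdot\beta)\cdot\bc=\bb\cdot(\beta\cdot\bc)$ and $(\beta\cdot\bb)\cdot\ga=\beta\cdot(\bb\cdot\ga)$, and verifies hypotheses a)--c) of Proposition~\ref{prpty:06} for these; the two-dimensional finiteness condition c) that you call delicate collapses immediately there, since for a column-row-column triple it only asks that $b_j\beta^{g^*}c_g\neq 0$ for finitely many $g$, and any such $g$ lies in $(\supp\beta)^*\cap\supp\bc$, which is finite by condition 1) of Proposition~\ref{hh}. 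So the ``alternative, more computational route'' you sketch in your last paragraph is exactly the paper's proof, and it is shorter than you anticipate. Your main route --- reading each factor as a continuous linear map (with $K$ as the space over a one-point index set), proving a matrix-of-a-composite lemma, and deriving bracket-independence from associativity of composition --- is sound and more conceptual: it explains why all bracketings agree (every partial product is the matrix of one and the same composite map), and it anticipates the multiplicativity argument in Theorem~\ref{thm:Ring}, where the paper proves $\wh{\Phi\cdot\Psi}=\wh\Phi\circ\wh\Psi$ by a similar mechanism. The price is hypothesis creep: Theorem~\ref{thm:endo}, Corollary~\ref{cor}, and Lemma~\ref{lin} (whose proof passes through part 2) of Theorem~\ref{sum}) all require $\fG$ to be balanced, and your evaluation on the columns $\del^g$ uses the standing Frechet assumption to have $\del^g\in\{\fG\}K$; Proposition~\ref{prpty:07} states neither hypothesis, and the paper's direct check needs only the definedness of the pairing, which holds for arbitrary filters by the definition of $\fG^\pr$. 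Since everywhere the paper applies the proposition $\fG$ is balanced, this costs nothing in context, but you should flag the extra assumption if you present your argument as a proof of the statement exactly as given.
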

\begin{proof}
We prove only associativity for short products:
\[(\bb\cdot\beta)\cdot \bc=\bb\cdot(\beta\cdot \bc)\quad \text{ and } \quad
(\beta\cdot \bb)\cdot\ga=\beta(\cdot \bb\cdot\ga)\]
for any columns $\bb,\bc$ and any rows $\beta,\ga$. The general case is derived by
induction as in \cite[Sec.~1]{Lang}.

Clearly the conditions a) and b) from Proposition \ref{prpty:06} are satisfied.
Furthermore, c) of this proposition means the following: for every
$j\in G$ there are only finitely many $g^*\in G$ such that
$b_j\beta^{g^*} c_g\neq 0$. But this follows from the fact that $g$
belong to $(\supp\beta)^*\cap\supp \bc$, which is a finite set.

The proof of the second equality is similar.
\end{proof}

Now we define a ring  $\MOG$ of `finitary' $G\times G$-matrices.
Later we will show that this ring can be considered as a
dense subring in the algebra of all continuous linear operators on
$\{\fG\} K$.

Let $\MOG$ denote the set of all $G\times G$-matrices
\[
\ba_1\ga_1+\ba_2\ga_2+\ldots+\ba_n\ga_n,
\]
where $\ba_i\in\{\fG\}K$ and $\ga_i\in K\{\fG^{\bot*}\}$, $i=1,
\dots, n$.

\begin{prop}\label{prpty:08}
The set $(\MOG,+,\cdot)$ is an associative ring.
\end{prop}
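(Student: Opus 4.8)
The plan is to verify the ring axioms by pushing everything down to the rank-one generators $\ba\ga$ with $\ba\in\{\fG\}K$ and $\ga\in K\{\fG^{\bot*}\}$, and then to invoke the associativity and distributivity results already in hand (Propositions~\ref{dist}, \ref{prpty:06}, \ref{prpty:07} and Lemma~\ref{prpty:04}). First I would dispose of the additive structure. By definition $\MOG$ is the set of all finite sums of generators, so it is visibly closed under addition: concatenating two such expressions exhibits the sum again as a finite sum of generators. The zero matrix lies in $\MOG$, e.g.\ as $\mathbf{0}\cdot\ga$, where $\mathbf{0}$ is the zero column (which belongs to $\{\fG\}K$ since $\cZ(\mathbf{0})=G\in\fG$), and $-\bigl(\suml_i\ba_i\ga_i\bigr)=\suml_i(-\ba_i)\ga_i\in\MOG$. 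Thus $(\MOG,+)$ is an abelian group, being a subgroup of the additive group of all $G\times G$-matrices.

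The heart of the matter is closure under multiplication, and the key computation is the product of two generators. By Proposition~\ref{prpty:07} the alternating product $\ba\cdot\ga\cdot\bb\cdot\del$ is defined and independent of bracketing, so
$$(\ba\ga)\cdot(\bb\del)=\ba\cdot(\ga\cdot\bb)\cdot\del.$$
Here $\ga\cdot\bb$ is the product of a row in $K\{\fG^{\bot*}\}$ with a column in $\{\fG\}K$; since $(\supp\ga)^*\cap\supp\bb$ is finite it is a well-defined scalar $k\in K$ (this is exactly the pairing of Proposition~\ref{hh}, and the finiteness used in the proof of Proposition~\ref{prpty:07}). Hence $(\ba\ga)(\bb\del)=(\ba k)\del$, and as $\cZ(\ba k)\sep\cZ(\ba)\in\fG$ we have $\ba k\in\{\fG\}K$, so this product is again a generator.

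I would then extend this to arbitrary elements using distributivity. Applying Proposition~\ref{dist} together with its evident right-hand analogue (valid because each pairwise product has just been shown to be defined), the product of two arbitrary elements expands as
$$\Bigl(\suml_i\ba_i\ga_i\Bigr)\Bigl(\suml_j\bb_j\del_j\Bigr)=\suml_{i,j}(\ba_i\ga_i)(\bb_j\del_j),$$
each term of which is a generator; hence the product is defined and lies in $\MOG$. Distributivity on both sides for the whole ring is then nothing but Proposition~\ref{dist} and its symmetric counterpart, once more legitimate because all products in sight are defined.

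Finally, for associativity I would use bilinearity to reduce the identity $(\Phi\Psi)\Theta=\Phi(\Psi\Theta)$ to the case of three generators, where it reads
$$\bigl((\ba\ga)(\bb\del)\bigr)(\bc\eps)=(\ba\ga)\bigl((\bb\del)(\bc\eps)\bigr).$$
Both sides are bracketings of the single alternating product $\ba\cdot\ga\cdot\bb\cdot\del\cdot\bc\cdot\eps$, which is bracketing-independent by Proposition~\ref{prpty:07}; hence they agree. I expect the main obstacle to be bookkeeping rather than conceptual: at each reduction one must be sure that the \emph{partially} defined matrix products are genuinely defined before rearranging them, and Proposition~\ref{prpty:07} is precisely what guarantees this for the alternating products that arise. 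I would also remark that $\MOG$ need not contain the identity matrix $E$ (it is not a finite sum of generators unless $G$ is finite), so it is in general a ring without unit.
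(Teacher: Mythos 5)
Your proof is correct and follows essentially the same route as the paper's: the paper likewise reduces closure under multiplication to the rank-one generators, computes $(\ba\cdot\ga)\cdot(\bb\cdot\beta)=\ba(\ga\cdot\bb)\beta=(\ba k)\beta$ with the scalar $k=\ga\cdot\bb$ guaranteed by Proposition~\ref{prpty:07}, and then extends to finite sums by distributivity (Proposition~\ref{dist}). Your explicit verification of the additive group structure and of associativity for triple products via bracketing-independence of alternating products, as well as the remark that $\MOG$ generally lacks a unit, are details the paper's proof leaves implicit but are in the same spirit.
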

\begin{proof}
Clearly this set is closed with respect to addition.

We show that the product of elements of $(\MOG,+,\cdot)$ is an
element of $(\MOG,+,\cdot)$. Clearly it suffices to check this for
elements of the form $\ba\cdot \ga$ and $\bb\cdot \beta$, where
$\ba,\bb$ are columns and $\ga,\beta$ are rows. Indeed,
$\ga\cdot \bb=k \in K$. Applying Proposition~\ref{prpty:07}, we obtain

\[(\ba\cdot\ga)\cdot(\bb\cdot\beta)=\ba(\ga\cdot \bb)\beta=(\ba k)\beta=\ba(k\beta)\,.\]

It remains to use distributivity (see Lemma~\ref{dist}).
\end{proof}

The following proposition also claims an associativity.

\begin{prop}\label{prop:Assoc}
Suppose that $\fG$ and $\fH$ are balanced filters on $G$ and $H$.
Further assume that $\ga\in K\{\fH^{\bot*}\}$ is a row,
$\ba\in\{\fG\}K$ is a column, and $\Phi\in {}^HK^G$ is an $H\times
G$-matrix such that $\cZ(\Phi)\in \lan \fH, \fG^{\pr *}\ran$. Then
$(\ga\cdot\Phi)\cdot \ba=\ga\cdot(\Phi\cdot \ba)$.
\end{prop}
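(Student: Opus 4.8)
The plan is to invoke Proposition~\ref{prpty:06} with $\Upsilon=\ga$, middle matrix $\Phi$, and $\Psi=\ba$, since that proposition yields at once that both $(\ga\cdot\Phi)\cdot\ba$ and $\ga\cdot(\Phi\cdot\ba)$ are defined and equal, once its three finiteness hypotheses a), b), c) are verified. Unwound in the present setting, a) asks that for each $g$ there be only finitely many $h$ with $\ga^{h^*}\Phi^g_h\neq 0$; b) asks that for each $h$ there be only finitely many $g$ with $\Phi^{g^*}_h a_g\neq 0$; and c) asks that there be only finitely many pairs $(h,g)$ with $\ga^{h^*}\Phi^{g^*}_h a_g\neq 0$. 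Both iterated products then evaluate to the single finite sum $\sum_{(h,g)}\ga^{h^*}\Phi^{g^*}_h a_g$, which is precisely the claimed identity.

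Conditions a) and b) I would deduce from the dual pairing of Proposition~\ref{hh}. By Lemma~\ref{prop:MM} the hypothesis $\cZ(\Phi)\in\lan\fH,\fG^{\pr*}\ran$ is equivalent to (\ref{eq:m1}) together with (\ref{eq:m2}). Feeding cofinite sets into these (legitimate because $\fG$ and $\fH$ contain the Frechet filters) shows, via Remark~\ref{prop:ex}, that every row $\Phi_h$ lies in $K\{\fG^{\pr*}\}$, while taking $A$ cofinite with $\ov A=\{g^*\}$ in (\ref{eq:m2}) shows every column $\Phi^g$ lies in $\{\fH\}K$. For b), the row $\Phi_h\in\FU(\fG^{\pr*})$ pairs with the column $\ba\in\FU(\fG)$, so $(\supp\Phi_h)^*\cap\supp\ba$ is finite, which is b). For a), the row $\ga\in\FU(\fH^{\pr*})$ pairs with the column $\Phi^g\in\FU(\fH)$, so $(\supp\ga)^*\cap\supp(\Phi^g)$ is finite, which is a).

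The substance of the argument is condition c). Tracking the involution, the pairs $(h,g)$ contributing a nonzero term to c) correspond bijectively, via $(h,g)\mapsto(h,g^*)$, to the set $\supp\Phi\cap(\ov{B_0}\times\ov{A_0})$, where $B_0=\cZ(\ga)^*$ and $A_0=\cZ(\ba)^*$. Since $\cZ(\ga)\in\fH^{\pr*}$ one has $B_0\in\fH^\pr$, and since $\cZ(\ba)\in\fG$ one has $A_0\in\fG^*$; hence $W=\ov{\ov{B_0}\times\ov{A_0}}$ is a basic member of $\fH^\pr\ots\fG^*$. I would then pin down the filter governing $\cZ(\Phi)$: as $\fH$ and $\fG^{\pr*}$ are balanced, Theorem~\ref{ht} gives $\lan\fH,\fG^{\pr*}\ran=(\fH^\pr\ots(\fG^{\pr*})^\pr)^\pr=(\fH^\pr\ots\fG^*)^\pr$, using the identity $(\fG^{\pr*})^\pr=\fG^*$ (valid since $\fG^*$ is balanced). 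Thus $\cZ(\Phi)\in(\fH^\pr\ots\fG^*)^\pr$, so by the very definition of $\pr$ the union $\cZ(\Phi)\cup W$ is cofinite in $H\ts G$; equivalently $\supp\Phi\cap(\ov{B_0}\times\ov{A_0})$ is finite, which is exactly c).

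The one genuinely delicate point is c): it is where balancedness of the filters and the membership $\cZ(\Phi)\in\lan\fH,\fG^{\pr*}\ran$ are both used essentially, and where the bookkeeping of the involution $*$—passing between $\supp\ga,\supp\ba$ and $B_0,A_0$, and between $\lan\fH,\fG^{\pr*}\ran$ and $(\fH^\pr\ots\fG^*)^\pr$—must be done with care. By contrast, a) and b) are routine once the rows and columns of $\Phi$ have been placed in the correct spaces $K\{\fG^{\pr*}\}$ and $\{\fH\}K$.
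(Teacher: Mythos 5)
Your proof is correct, but it takes a genuinely different route from the paper's. The paper argues topologically: it writes $\ba=\liml_i{}^{T(\fG)}\ba_i$ with finite-support columns $\ba_i$ (using the density statement in Theorem~\ref{top-UA}~3)), obtains $(\ga\cdot\Phi)\cdot\ba_i=\ga\cdot(\Phi\cdot\ba_i)$ from Proposition~\ref{prpty:08}, and then passes to the limit using continuity of the three multiplication operators involved (the dual variant of Proposition~\ref{prop:cont} to see $\ga\cdot\Phi\in K\{\fG^{\pr*}\}$, Corollary~\ref{cor} for left multiplication by $\ga\cdot\Phi$, and Theorem~\ref{thm:endo} for $\ga$ and $\Phi$ themselves). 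You instead verify the three finiteness hypotheses of Proposition~\ref{prpty:06} directly for the triple $(\ga,\Phi,\ba)$: your a) and b) are routine pairing arguments once the rows and columns of $\Phi$ are placed in $K\{\fG^{\pr*}\}$ and $\{\fH\}K$ (both placements check out, via (\ref{eq:m1}), (\ref{eq:m2}) and the fact that balanced filters contain the Frechet filter), and your c) --- finiteness of $\supp\Phi\cap\bigl((\supp\ga)^*\ts(\supp\ba)^*\bigr)$ --- is exactly where the hypothesis $\cZ(\Phi)\in\lan\fH,\fG^{\pr*}\ran$ does its work, through Theorem~\ref{ht}'s identification $\lan\fH,\fG^{\pr*}\ran=(\fH^\pr\ots\fG^*)^\pr$; your computation $(\fG^{\pr*})^\pr=\fG^{\pr\pr*}=\fG^*$ correctly isolates where balancedness of $\fG$ enters, with balancedness of $\fH$ consumed by Theorem~\ref{ht} itself. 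The trade-off: your argument is limit-free and purely combinatorial, and it actually proves slightly more than the statement --- the triple product collapses to the single finite double sum $\sum_{(h,g)}\ga^{h^*}\Phi_h^{g^*}a_g$, so both bracketings are defined unconditionally; the paper's proof, by contrast, never needs this global finiteness for the infinite-support $\ba$ (only for the approximants $\ba_i$) and instead recycles the continuity machinery it has already built, in keeping with the paper's theme of replacing combinatorial bookkeeping by topology. One cosmetic remark: your ``bijection'' in c) tacitly uses that $K$ has no zero divisors (a product of nonzero elements of a skew field is nonzero), but since finiteness already follows from the containment of the contributing pairs in $\supp\Phi\cap(\ov{B_0}\ts\ov{A_0})$, the bijectivity is not actually needed.
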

\begin{proof}
From Theorem \ref{top-UA} 3) it follows that
$\ba=\liml_i{}^{T(\fG)}\,\ba_i$, where $\ba_i$ are columns with
finite support. If $i$ is fixed, then $(\ba_i)_g\neq 0$ for finitely
many $g\in G$. For any such $g$, there exists only finitely many $h$
such that $\ga^{h^*} \Phi^{g^*}_h\neq 0$. Thus there are only
finitely many pairs $(g, h)$ such that $\ga^{h^*} \Phi^{g^*}_h
(\ba_i)_g\neq 0$. This means that hypotheses of
Proposition~\ref{prpty:06} are satisfied.
Now we have
$$(\ga\cdot\Phi)\cdot \ba=(\ga\cdot\Phi)\cdot\lim_i{}^{T(\fG)}\ba_i\,.$$
Furthermore, by the assumption (and the dual variant of
Proposition~\ref{prop:cont}) the right multiplication by $\Phi$ acts
as a linear continuous operator from $K\{\fH^{\pr *}\}$ to $K\{\fG^{\pr *}\}$. In
particular, $\ga\cdot \Phi\in K\{\fG^{\pr *}\}$. Then, by
Corollary~\ref{cor}, the left multiplication by $\ga\cdot \Phi$ is a
linear continuous map from $\{\fG\} K$ to $K$. Thus

$$(\ga\cdot\Phi)\cdot\lim_i{}^{T(\fG)}\ba_i=
\liml_i{}^{\Disc}(\ga\cdot\Phi)\cdot \ba_i\,,
$$
where $\Disc$ denotes the discrete topology on $K$. By
Proposition~\ref{prpty:08} we obtain
$(\ga\cdot\Phi)\cdot \ba_i=\ga\cdot(\Phi\cdot \ba_i)$, therefore

$$
\liml_i{}^{\Disc}(\ga\cdot\Phi)\cdot \ba_i=
\liml_i{}^{\Disc}\ga\cdot(\Phi\cdot \ba_i)\,.
$$

Since left multiplications by $\ga$ and $\Phi$ are continuous operators
(see Theorem~\ref{thm:endo}), we conclude that

$$\liml_i{}^{\Disc}\ga\cdot(\Phi\cdot \ba_i)= \ga\cdot\liml_i{}^{T(\fH^\bot)}(\Phi\cdot
\ba_i)=\ga\cdot(\Phi\cdot\liml_i{}^{T(\fG)}\ba_i)\,.$$
Because $\ba =\lim_i{}^{T(\fG)}\ba_i$, this is equal to $\ga\cdot (\Phi \cdot
\ba)$, as desired.
\end{proof}

Now we give a matrix description of the algebra of linear continuous operators of
the column space $\{\fG\} K$.

\begin{theorem}\label{thm:Ring}
Let $\fG$ be a balanced filter on a set $G$ with involution, and
let $\MG$ be a set of all $G\times G$-matrixes $\Phi$ such that
$\Za(\Phi)\in\lan \fG,\fG^{\bot*}\ran$. Then

a) $\MG$ is a ring with respect the above defined addition
and multiplication;

b) If $\Phi\in \MG$, then the map $\wh\Phi: \ba \to \Phi\cdot \ba$
is a continuous endomorphism of the column space $\{\fG\}K$.

c) The map $\Phi\to\hat\Phi$ is an isomorphism from $\MG$ onto the
ring of all continuous endomorphisms of the space $\{\fG\}K$.
%The same properties take place for the rows.
\end{theorem}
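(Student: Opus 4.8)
The plan is to establish part (b) together with the bijectivity of $\Phi\mapsto\wh\Phi$ first, and then deduce the ring structure in part (a) by transporting it along this bijection from the ring of continuous endomorphisms of $\{\fG\}K$. Throughout I use that $\lan\fG,\fG^{\bot*}\ran$ is a balanced filter: by Remark~\ref{star} the operations $\bot$ and $*$ commute, so $(\fG^{\bot*})^{\bot\bot}=\fG^{\bot\bot\bot*}=\fG^{\bot*}$ by Lemma~\ref{two-perp}, whence $\fG^{\bot*}$ is balanced, and Theorem~\ref{ht} applied to the balanced pair $\fG,\fG^{\bot*}$ shows that $\lan\fG,\fG^{\bot*}\ran$ is a balanced filter.

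For (b) I would invoke Theorem~\ref{thm:endo} with $\fH=\fG$: the hypothesis $\Za(\Phi)\in\lan\fG,\fG^{\bot*}\ran$ is exactly condition (b) there, equivalent to (a), so left multiplication by $\Phi$ is a continuous endomorphism $\wh\Phi$ of $\{\fG\}K$. For the bijection, observe that the matrix of $\wh\Phi$ is again $\Phi$, since its $(h,g^*)$-entry is $\wh\Phi[\del^g]_h=(\Phi\cdot\del^g)_h=\Phi^{g^*}_h$; together with Lemma~\ref{lin} (a continuous map is determined by its matrix) this gives injectivity. Surjectivity is the final assertion of Theorem~\ref{thm:endo}: any continuous endomorphism $\phi$ has a matrix $\Phi$ with $\Za(\Phi)\in\lan\fG,\fG^{\bot*}\ran$, hence $\Phi\in\MG$, and $\phi=\wh\Phi$ by Lemma~\ref{lin}.

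The crucial step is multiplicativity. Additivity of $\Phi\mapsto\wh\Phi$ is immediate from distributivity (Proposition~\ref{dist}). For the product I would first check that $\Phi\cdot\Psi$ is everywhere defined for $\Phi,\Psi\in\MG$: its $(h,g)$-entry is $\suml_{u\in G}\Phi^{u^*}_h\Psi^g_u$, and the nonzero terms are indexed by $u\in\supp(\Phi_h)^*\cap\supp(\Psi^g)$. Here the rows satisfy $\Phi_h\in\FU(\fG^{\bot*})$ by Theorem~\ref{thm:endo}(c), while the columns satisfy $\Psi^g=\Psi\cdot\del^{g^*}\in\FU(\fG)$ as an image under $\wh\Psi$ of a finitely supported column; hence that intersection is finite by the pairing in Proposition~\ref{hh}. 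Next, to prove $(\Phi\cdot\Psi)\cdot\ba=\Phi\cdot(\Psi\cdot\ba)$ for every $\ba\in\{\fG\}K$, I would argue coordinatewise: the $h$-th coordinate of the left side is $(\Phi_h\cdot\Psi)\cdot\ba$ and of the right side is $\Phi_h\cdot(\Psi\cdot\ba)$, and these agree by the row–matrix–column associativity of Proposition~\ref{prop:Assoc}, applied with $\fH=\fG$, row $\ga=\Phi_h\in K\{\fG^{\bot*}\}$, and $\Za(\Psi)\in\lan\fG,\fG^{\bot*}\ran$. Thus $\wh{\Phi\cdot\Psi}[\ba]=\Phi\cdot(\Psi\cdot\ba)=\wh\Phi[\wh\Psi[\ba]]$, so $\wh{\Phi\cdot\Psi}=\wh\Phi\circ\wh\Psi$; in particular $\wh\Phi\circ\wh\Psi$ is a continuous endomorphism whose matrix is $\Phi\cdot\Psi$, forcing $\Phi\cdot\Psi\in\MG$ and giving closure under multiplication.

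Finally, for (a) it remains to assemble the axioms. Closure under addition holds because $\lan\fG,\fG^{\bot*}\ran$ is a filter and $\Za(\Phi+\Psi)\sep\Za(\Phi)\cap\Za(\Psi)$; closure under multiplication was just shown. Associativity of matrix multiplication and distributivity then transfer from composition of continuous endomorphisms through the injective, additive, multiplicative map $\Phi\mapsto\wh\Phi$; for instance $\wh{(\Phi\cdot\Psi)\cdot\Theta}=(\wh\Phi\circ\wh\Psi)\circ\wh\Theta=\wh\Phi\circ(\wh\Psi\circ\wh\Theta)=\wh{\Phi\cdot(\Psi\cdot\Theta)}$, and injectivity yields $(\Phi\cdot\Psi)\cdot\Theta=\Phi\cdot(\Psi\cdot\Theta)$. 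The unity is the matrix $E$ of the identity operator, which lies in $\MG$ by surjectivity. Hence $\MG$ is a ring and $\Phi\mapsto\wh\Phi$ is a ring isomorphism onto the continuous endomorphisms, which proves (a) and (c). I expect the main obstacle to be precisely the multiplicativity step, namely verifying that $\Phi\cdot\Psi$ is defined and reducing its associativity to Proposition~\ref{prop:Assoc}; the remaining items are routine filter bookkeeping.
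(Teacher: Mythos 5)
Your proposal is correct and follows essentially the same route as the paper's proof: part (b) and the bijectivity of $\Phi\mapsto\wh\Phi$ via Theorem~\ref{thm:endo} and Lemma~\ref{lin}, definedness of $\Phi\cdot\Psi$ from the rows lying in $K\{\fG^{\bot*}\}$ and the columns in $\{\fG\}K$, the key identity $(\Phi\cdot\Psi)\cdot\ba=\Phi\cdot(\Psi\cdot\ba)$ verified coordinatewise through Proposition~\ref{prop:Assoc} with $\fH=\fG$, and the ring axioms for $\MG$ then transported along the resulting isomorphism onto the continuous endomorphisms. Your extra preliminaries (the balancedness check for $\lan\fG,\fG^{\bot*}\ran$ and the explicit unity $E$) are harmless additions not needed by, but consistent with, the paper's argument.
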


\begin{proof}
b) follows from the equivalence of a) and b) in Theorem
\ref{thm:endo}.

a), c) First we prove that $\MG$ is closed with respect to
multiplication. Suppose that $\Phi, \Psi\in \MG$. Since $\cZ(\Phi),
\cZ(\Psi)\in \lan \fG,\fG^{\bot*}\ran$, by Remark~\ref{prop:ex} a) and its dual variant,
we obtain that $\cZ(\Phi_g)\in \fG^{\pr *}$ and $\cZ(\Psi^{h^*})\in \fG$ for all
$g, h\in G$. It follows that the product $\Phi\cdot \Psi$ is defined.

Take any $g\in G$. By Proposition~\ref{prop:Assoc} we obtain
$(\Phi_g \cdot \Psi)\cdot \ba = \Phi_g \cdot (\Psi\cdot \ba)$.
Then by the definition of matrix multiplication we derive
$$(\Phi_g \cdot \Psi)\cdot \ba =(\Phi \cdot \Psi)_g \cdot \ba=
((\Phi \cdot \Psi)\cdot \ba)_g\,,$$ and $\Phi_g \cdot (\Psi\cdot
\ba)=(\Phi \cdot (\Psi \cdot \ba))_g$, therefore $(\Phi \cdot
\Psi)\cdot \ba= \Phi \cdot (\Psi \cdot \ba )$. Because the left
multiplication by $\Phi$ and $\Psi$ is continuous, this equality
implies that the left multiplication by $\Phi\cdot \Psi$ is
continuous. By Theorem~\ref{thm:endo} we conclude that $\Phi\cdot
\Psi\in \MG$.

Furthermore, writing $(\Phi \cdot \Psi)\cdot \ba= \Phi \cdot (\Psi
\cdot \ba )$ at the level of endomorphisms, we obtain $\wh{\Phi
\cdot \Psi}=\wh\Phi\circ \wh \Psi$, therefore the map $\Phi\to \wh
\Phi$ preserves multiplication. Clearly this map preserves an
addition, hence it is a morphism of rings (associative or not).

By Theorem~\ref{thm:endo}, if $\phi$ is a continuous linear operator
of the space $\{\fG\} K$, then there is a unique matrix $\Phi\in
\MG$ such that $\phi[\ba]=\Phi\cdot \ba$. It follows that the map $\Phi\to \wh \Phi$
is an isomorphism of rings, in particular $\MG$ is an associative ring.
\end{proof}

Suppose that $\fG$, $\fH$ are balanced filters on sets with involution $G$ and $H$.
Let $\La(\fH,\fG)$ be the space of all linear continuous maps $\{\fG\}K\to\{\fH\}K$.
By Theorem~\ref{thm:endo} we may identify $\La(\fH,\fG)$ with the space of
$H\times G$-matrices $\Phi$ such that $\Za(\Phi)\in\lan \fH,\fG^{\bot*}\ran$. Therefore
we may consider $\La(\fH,\fG)$ as a topological space with the topology
$T(\langle\fH,\fG^{\bot*}\rangle)$ defined by the filter
$\lan \fH,\fG^{\bot*}\ran$ on $H\times G$.

It is well known that for normed linear spaces, the convergency
of a net of continuous operators with respect to the operator norm
implies their strong convergency. The following proposition gives a version of this
result in our situation.

\begin{prop}\label{cc}
Suppose that a net of  $H\times G$-matrices $\Phi_i\in\La(\fH,\fG)$ converges
to $\Phi$ with respect to $T(\langle\fH,\fG^{\bot*}\rangle)$. Then for any column
$\ba\in\{\fG\}K$ and for any row $\ga\in K\{\fH^{\bot*}\}$
\[\liml_i{}^{T(\fG)}\Phi_i\cdot\ba=\Phi\cdot\ba \ \text{ and } \
\liml_i{}^{T(\fH^{\bot*})}\ga\cdot\Phi_i=\ga\cdot\Phi.\]
\end{prop}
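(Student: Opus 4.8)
The plan is to pass to the net of differences $\Psi_i:=\Phi_i-\Phi$ and prove that convergence of $\Phi_i$ to $\Phi$ forces $\Psi_i\cdot\ba\to 0$ and $\ga\cdot\Psi_i\to 0$. Since the topology $T(\langle\fH,\fG^{\bot*}\rangle)$ lives on the space of matrices whose zero set lies in $\langle\fH,\fG^{\bot*}\rangle$, the limit $\Phi$ also satisfies $\Za(\Phi)\in\langle\fH,\fG^{\bot*}\rangle$, so by Theorem~\ref{thm:endo} both $\Phi$ and each $\Phi_i$ act as continuous operators; as left and right multiplication are $K$-linear, it suffices to treat $\Psi_i$. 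Here $\Psi_i\to 0$ in $T(\langle\fH,\fG^{\bot*}\rangle)$ means, by the neighbourhood base of Theorem~\ref{top-UA}, that for every $X\in\langle\fH,\fG^{\bot*}\rangle^\bot$ one has $\supp(\Psi_i)\seq X$ eventually.

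The decisive step is to pick out the correct $X$. For the first limit I would set $A:=\Za(\ba)\in\fG$, so that $\supp(\ba)=\ov A$, and fix a basic zero neighbourhood $U(B,\fH)$ of $\{\fH\}K$ with $B\in\fH^\bot$. I claim the rectangle $X:=\ov{\ov B\ts\ov{A^*}}$ lies in $\langle\fH,\fG^{\bot*}\rangle^\bot$. Indeed $A^*\in\fG^*$ and $B\in\fH^\bot$, so by Fact~\ref{ten} $X$ is a basic member of $\fH^\bot\ots\fG^*$; and since $\fG$ is balanced, Remark~\ref{star} gives $(\fG^{\bot*})^\bot=\fG^{\bot\bot*}=\fG^*$, whence Lemma~\ref{ss}, applied with $\fG^{\bot*}$ in place of $\fG$, yields $\fH^\bot\ots\fG^*\seq\langle\fH,\fG^{\bot*}\rangle^\bot$. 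This establishes the claim.

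With $X$ chosen, convergence supplies an index past which $\supp(\Psi_i)\seq X$, i.e. $\ov B\ts\ov{A^*}\seq\Za(\Psi_i)$. Then for each $s\in\ov B$ only indices $t\in\ov A=\supp(\ba)$ contribute to $(\Psi_i\cdot\ba)_s=\sum_{t\in\ov A}(\Psi_i)^{t^*}_s a_t$, and each such term vanishes because $t^*\in\ov{A^*}$ forces $(s,t^*)\in\Za(\Psi_i)$; hence $(\Psi_i\cdot\ba)_s=0$. Thus $\supp(\Psi_i\cdot\ba)\seq B$, i.e. $\Psi_i\cdot\ba\in U(B,\fH)$, and as $B\in\fH^\bot$ was arbitrary we obtain $\Psi_i\cdot\ba\to 0$ in the topology $T(\fH)$ of the target space $\{\fH\}K$, which is the first limit. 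The second limit is proved dually: putting $C:=\Za(\ga)\in\fH^{\bot*}$ (so $C^*\in\fH^\bot$) and fixing $U(A,\fG^{\bot*})$ with $A\in(\fG^{\bot*})^\bot=\fG^*$, the same two facts show $Y:=\ov{\ov{C^*}\ts\ov A}\in\fH^\bot\ots\fG^*\seq\langle\fH,\fG^{\bot*}\rangle^\bot$, and eventual containment $\supp(\Psi_i)\seq Y$ forces $(\ga\cdot\Psi_i)^g=\sum_{h}\ga^{h^*}(\Psi_i)^g_h=0$ for every $g\in\ov A$, giving $\ga\cdot\Psi_i\to 0$ in $T(\fG^{\bot*})$.

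The one genuinely delicate point is the filter bookkeeping in the second paragraph: recognizing that the relevant basic neighbourhood of $0$ in the matrix topology is exactly the rectangle whose horizontal side is governed by $\ov B$ and whose vertical side is governed by $\supp(\ba)=\ov A$, and then certifying that such rectangles belong to $\langle\fH,\fG^{\bot*}\rangle^\bot$. This is where balancedness of $\fH$ and $\fG$ and the inclusion $\fH^\bot\ots\fG^*\seq\langle\fH,\fG^{\bot*}\rangle^\bot$ coming from Lemma~\ref{ss} do all the work; once the rectangle is seen to lie in the dual filter, the remainder is a routine coordinatewise vanishing argument.
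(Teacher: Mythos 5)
Your proof is correct and follows essentially the same route as the paper's: reduce to $\Phi=0$ (the paper assumes this without loss of generality, you pass to $\Psi_i=\Phi_i-\Phi$), take the rectangle $\ov{\ov B\ts\ov{\Za(\ba)^*}}$ as the basic zero neighbourhood in the matrix space, and verify coordinatewise that $(\Psi_i\cdot\ba)_s=0$ for all $s\in\ov B$. The only (harmless) divergences are that you certify this rectangle lies in $\lan\fH,\fG^{\bot*}\ran^\bot$ using just the inclusion $\fH^\bot\ots\fG^*\seq\lan\fH,\fG^{\bot*}\ran^\bot$ from Lemma~\ref{ss} together with $(\fG^{\bot*})^\bot=\fG^*$, rather than the full equality $\lan\fH,\fG^{\bot*}\ran^\bot=\fH^\bot\ots\fG^*$ of Theorem~\ref{ht} that the paper quotes, and that you spell out the dual half (which the paper dismisses with ``similarly''), correctly placing that limit in $T(\fG^{\bot*})$ --- the $T(\fH^{\bot*})$ in the statement is evidently a misprint, since $\ga\cdot\Phi_i$ is a row indexed by $G$.
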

\begin{proof}
Clearly we may assume that $\Phi=0$, hence $\Phi\cdot\ba=\mathbf{0}$. Thus for the
first equation we have to prove that $\liml_i{}^{T(\fG)}\Phi_i\cdot\ba=0$.

Recall that by Theorem~\ref{ht} we have
$\langle\fH,\fG^{\bot*}\rangle^\bot=\fH^\bot\otimes\fG^*$.
Furthermore (see Section~\ref{S-Space}) the basis of the filter
$\fH^{\pr}\otimes \fG^*$ is
given by the sets $\ov{\ov B_1\times \ov A_1}$, where $B_1\in \fH$ and $A_1\in \fG^*$.

Let $U(B,\fH)$, $B\in\fH^\bot$ be a zero neighborhood of the space $\{\fH\}K$.
We have to find a zero neighborhood
$U=U(\ov{\ov B_1\times \ov A_1},T(\lan \fH,\fG^{\bot*}\ran))$, of the space $\La(\fH,\fG)$
such that $\Phi_i\in U$ implies $\Phi_i\cdot\ba\in U(B,\fH)$.

Take $B_1=B$ and $A_1=\Za(\ba)^*$. If $\Phi_i\in U$ then, by the definition $U$, we have
\[\Za(\Phi_i)\supseteq\ov B_1\times\ov A_1=\ov B\times\ov{\Za(\ba)}^*.\]
If $h$ is an arbitrary element of $\ov B$, then
\[(\Phi_i\cdot\ba)_h=\suml_{g\in G}(\Phi_i)_h^{g^*}\ba_g=\suml_{g\in \ov{\Za(\ba)}}0\cdot\ba_g+
\suml_{g\in \Za(\ba)}(\Phi_i)_h^{g^*}\cdot 0=0.\]
It follows that $\Za(\Phi_i\cdot\ba)\supseteq\ov B$, that is,
$\Phi_i\cdot\ba\in U(B,\fH)$, as desired.

The second equality can be verified similarly.
\end{proof}

By Lemma~\ref{prpty:04}, $\MOG$ is a subring of $\MG$. The following remark
shows that this subring is dense.

\begin{remark}
$\MOG$ is a dense subring of $\MG$ with respect to topology $T(\lan \fG, \fG^{\pr *}\ran)$.
\end{remark}
\begin{proof}
The set $K[H\times G]$ of all matrices with finite support forms a subalgebra of the
algebra $\MOG$. By Theorem~\ref{top-UA}, this algebra is dense in $\MG$. It remains
to notice that $K[H\times G]\seq \MOG$, because every finite matrix is a linear
combination of matrices $\del^g\cdot \del_h\in \MOG$.
\end{proof}


\begin{thebibliography}{99}


\bibitem{Bou} N.~Bourbaki, Topological Vector Spaces, Chapters 1--5,
Elements of Mathematics, Springer, 1987.

\bibitem{BuTop} N.~Bourbaki, General Topology, Chapters 1--4,
Elements of Mathematics, Springer, 1998.

\bibitem{Du94}  N.I.~Dubrovin, The rational closure of group rings
in left ordered groups, Schriftenreihe des Fachbereich Mathematik,
Universit\"{a}t Duisburg, 1994.

\bibitem{Du}  N.I.~Dubrovin, Formal sums and power series, Sb. Math., 191 (2000), 955--971.

\bibitem{DuDu1} T.V.~Dubrovina, N.I.~Dubrovin, Filters on  the Cartesian
product of two sets, Mat. Stud., 21 (2004), 197--208.

\bibitem{DuDu2} T.V.~Dubrovina, N.I.~Dubrovin, Topological linear spaces of formal sums,
Mat. Stud., 21 (2004), 209--220.

\bibitem{G-J} L.~Gillman, M.~Jerison, Rings of Continuous Functions, D. Van Nostrand Company,
1960.

\bibitem{Lang} S.~Lang, Algebra, 3-d ed., Graduate Texts in Mathematics, Vol.~211,
Springer, 2002.

\bibitem{Luck} W.~L\"uck, L2-Invariants: Theory and Applications to Geometry and K-Theory,
Ergebnisse der Mathematik und ihrer Grenzgebiete, Springer, Vol. 44, 2002.

%\bibitem{R-R} A.P.~Robertson,  W.J.~Robertson, Topological Vector Spaces,
%Cambridge University Press, 1964.

%\bibitem{Will} S.~Willard, General Topology, Dover Publications, 1988.


\end{thebibliography}
\end{document}